\def\R{\mathbb{R}^{3}}
\def\C{\mathbb{C}^{4}}
\def\p{\partial}
\def\R{\mathbb{R}^{3}}
\newtheorem{theorem}{Theorem}[section]
\newtheorem{lemma}[theorem]{Lemma}
\theoremstyle{definition}
\theoremstyle{remark}
\newtheorem{remark}[theorem]{Remark}
\numberwithin{equation}{section}
\begin{document}

\title[3D DKG in the scalar invariant space]{On Global Existence of 3D Charge Critical Dirac-Klein-Gordon system}
\author{Xuecheng Wang}
\address{Mathematics Department, Princeton University, Princeton, New Jersey,08544, USA}
\email{xuecheng@math.princeton.edu}
\thanks{}
\maketitle
\begin{abstract}
In this paper, we prove the global well-posedness property of charge critical Dirac-Klein-Gordon
 (DKG) system in $\mathbb{R}^{3+1}$ for small initial data in  a space of scale invariant data which has extra weighted regularity in the angular variables. Therefore, 
by finite speed propagation, 
we could also derive the local well-posedness property for large initial
 data in the same space. To author's knowledge, our result appears to be the first result 
on the critical 3D DKG. 
\end{abstract}
\tableofcontents

\section{Introduction}\label{introduction}
This paper is devote to study the global well-posedness(GWP) of the coupled
Dirac-Klein Gordon (DKG) system for small initial data at the critical level of regularity in 
the three spatial dimension setting,
and from the finite speed propagation property of 
wave equations and  Klein-Gordon equations, we can derive the local 
well-posedness (LWP) for large initial data as a byproduct. In general, the coupled Dirac-Klein-Gordon system has the following formulation:

\begin{equation}\label{DKG}
\mbox{(DKG)}\,\,\left\{\begin{array}{ll}
(-i\gamma^{\mu} \partial_{\mu} + M)\psi = \phi \psi& M \geq 0\\
(-\square + m^2) \phi = \psi^{\dagger} \gamma^0 \psi& \square = -\partial_t^2 + \Delta, m \geq 0
\end{array}
\right.
\end{equation}
where $\phi :\mathbb{R}^{1+n} \rightarrow \mathbb{R}$ represents a meson field and $\psi: \mathbb{R}^{1+n} \rightarrow \mathbb{C}^N$ is the Dirac
spinor field and could be viewed as a vector field in $\mathbb{C}^N$. 
The dimension of the spin space  $N$ depends on the
spatial dimension $n$. We use coordinates $t= x^0, x= (x^1, x^2,\cdots, x^n)$ on $\mathbb{R}^{1+3}$ and 
$\p_{\mu}$ denotes $\p /\p_{x^\mu}$. We will use the convention that the Greek indices 
range from $0$ to $n$, Roman indices range from $1$ to $n$ and the Einstein summation rule
is applied throughout this paper. 
 For each $\mu \in \{0, 1, \cdots, n\}$, $\gamma^{\mu}$  in (\ref{DKG}) represents a
 $N\times N$ Dirac Matrices, and the following rules are satisfied:
\begin{equation}\label{rules}
\gamma^{\mu}\gamma^{\nu} + \gamma^{\nu}\gamma^{\mu} = 2 g^{\mu\nu} I,\qquad (\gamma^{0})^{\dagger} = \gamma^{0}\quad and\,\, (\gamma^{j})^{\dagger}= - \gamma^{j},
\end{equation}
where $g^{\mu\nu} = (diag(1,-1,\cdots,-1))_{\mu,\nu}$, $I$ is 
the $N\times N$ identity matrix and the operator ``$\dagger$'' denotes the 
conjugate transpose. For each spatial 
dimension $n$, we are interested in the smallest possible 
dimension $N$ of the spin space
 that admits Dirac matrices satisfy (\ref{rules}). In particular, when $n=3$, the smallest
possible value is $4$, i.e, $N=4$ and we would like to mention that when $n=1,2$, $N=2$. 
In this paper, we will focus on the 3D case, i.e $n=3$. In 3D case, the Dirac matrices are given by 
the following: 
\begin{equation}
\gamma^0 = \left(\begin{array}{lr}
I & 0\\
0 & -I\\
\end{array}\right),\qquad \gamma^j = \left(\begin{array}{lr}
0 & \sigma^j \\
-\sigma^j & 0\\
\end{array}\right) ,
\end{equation}
where
\begin{equation}
\sigma^1=\left(\begin{array}{lr}
0 & 1\\
1 &0\\
\end{array}\right),\quad \sigma^2 = \left(\begin{array}{lr}
0 & -i\\
i & 0\\
\end{array}\right),\quad \sigma^3 = \left(\begin{array}{lr}
1 & 0\\
0 & -1\\
\end{array}\right)
\end{equation}
are Pauli matrices. Denote
\begin{displaymath}
\beta = \gamma^0,\quad \alpha^j = \gamma^0\gamma^j,
\end{displaymath}
 hence from (\ref{rules}), we could derive that $\beta^2 = (\alpha^j)^2 = I$, $\beta^{\dagger} = \beta$ and $(\alpha^j)^{\dagger} = \alpha^j$. 
With above notations, we can very easily reduce (\ref{DKG}) into the following formulation:
\begin{equation}\label{DKG5}
\left\{\begin{array}{lr}
-i (\partial_{t} + \alpha \cdot \nabla) \psi + M \beta \psi=   \phi \beta \psi, &\, \\
(-\square + m^{2}) \phi = \langle \beta \psi, \psi \rangle_{\C}.&\, \\
\end{array}\right.
\end{equation}
From now on, we will mainly working on the system (\ref{DKG5}). One can verify that the Dirac-Klein-Gordon system has
 two conservation laws : energy conservation law and charge conservation law. More precisely,
\begin{equation}
\int |\psi(t, x )|^2 d x = \int |\psi(0, x)|^2 d x, \quad \int e(\phi, \psi)(t, x) d x = \int e(\phi, \psi)(0, x) d x,
\end{equation}
where $e(\phi,\psi)$ is the energy density and has the following form:
\begin{equation}\label{equation11}
e(\phi, \psi) = Im(\psi^{\dagger}\alpha^{j} \partial_j \psi) - (M - g\phi) \psi^{\dagger} \beta \psi - 1/2((\partial_t \phi)^2 + |\nabla \phi|^2 + m^2 \phi^2).
\end{equation}
One might notice that the energy density $e(\phi, \psi)$ is not positive definite, which
is unpleasant and it's very difficult to exploit the energy conservation law to control
the solution. While, the charge conservation law is indeed 
very helpful, as it states that the $L^2$ norm of the spinor field $\psi$ doesn't change
with respect to time. This is also one of the key ingredients in Chadam's  \cite{Chadam1} proof of
global regularity for the 1D DKG system.

As the goal of this paper is to study global well-posedness in the critical 
regularity space. It would be better to have an idea of what the critical regularity is
at first. To this end, we apply the scaling heuristic,  in the massless 
case $m=M=0$,  DKG system (\ref{DKG}) is invariant under the scaling:
\begin{displaymath}
\psi(t, x) \rightarrow \frac{1}{L^{3/2}} \psi(\frac{t}{L}, \frac{x}{L}),\qquad \phi (t, x) \rightarrow \frac{1}{L} \phi (\frac{t}{L}, \frac{x}{L}),
\end{displaymath}
hence the scale invariant initial data space in 
3D is
\begin{equation}
(\psi_{0}, \phi_{0}, \phi_{1}) \in L^{2}(\R) \times \dot{H}^{1/2}(\R)\times \dot{H}^{-1/2}(\R),
\end{equation}
heuristically, one doesn't expect well-posedness below this critical regularity.

\subsection{Previous results}\label{previous}
\par By classical energy method, one can obtain LWP of 3D DKG system for initial data lies in  $H^{1+\epsilon}\times H^{3/2+\epsilon}\times H^{1/2+\epsilon}$ for any 
$\epsilon > 0$. Later, Bachelot \cite{Bachelot} proved that
 this $\epsilon$ could be removed. By using Strichartz estimates for the homogeneous wave 
equations, one can lower the regularity to $H^{1/2 + \epsilon} \times H^{1+\epsilon} \times H^{\epsilon}$ (see
\cite{Bournaveas1,Ponce}). 
To lower the regularity further, we need to utilize the
special structures inside the nonlinearities.

\vspace{1\baselineskip}

\par One may notice that the nonlinearities of the DKG system are quadratic and due to the well-known counterexample found by 
Lindblad \cite{Lindblad}, we know that the level of regularity predicted by scaling cannot be reached for general quadratic 
nonlinear wave equations. The main enemy  of preventing to reach the critical regularity is the strong parallel interaction inside the quadratic nonlinearity 
and such interaction is especially strong in the low spatial dimensions, like $n=1,2,3$. 
But such parallel interactions might be eliminated if null structure presents inside the nonlinearity, which makes it possible to reach the critical regularity.
\vspace{1\baselineskip}
\par In \cite{Klainerman}, Klainerman and Machedon demonstrated a null structure in the nonlinearity of Klein-Gordon part, and later Bournaveas\cite{Bournaveas} followed their idea and found
a null structure in the nonlinearity of Dirac part of DKG system and applied it successfully to lower the regularity to $H^{1/2}\times H^{1}\times L^{2}$. However, in \cite{Bournaveas}, $X^{s,b}$ type Bourgain space
was not used to maximize the advantage of null structure. Recently, Fang and Grillakis \cite{Fang2}  proved LWP in $H^{s}\times H^{1}\times L^{2}$ for 
$1/4 < s \leq 1/2$ by using Bourgain spaces to utilize most of the null structures.

\vspace{1\baselineskip}

\par However the null structure found by Bournaveas has a drawback that it involves squaring the Dirac equation which causes difficulty at very low regularity and not as good as the null structure that appears in the Klein-Gordon part. Recently P. D'Ancona, D. Foschi, and S. Selberg  \cite{D'Ancona1} found that the null structure inside the Klein-Gordon part  also presents in the Dirac part.  After using duality argument, one can see that the type of null structure inside the Dirac part is of same type as appeared inside the meson field. Because of this observation which simplifies the analysis of the DKG system, they proved
the LWP at the regularity level arbitrarily close to the scale invariant level, i.e LWP holds for DKG system with initial data
 $(\psi_{0}, \phi_{0}, \phi_{1}) \in H^{\epsilon}\times H^{1/2+\epsilon}\times H^{-1/2+\epsilon}$, and  $\epsilon>0$ can be arbitrarily small.
 This $\epsilon$ gap is very helpful in the local theory of subcritical case, 
as one can  gain $\sigma(T)$ in the bilinear estimate for the nonlinearities,
 here $T$ stands for the length of time interval of existence, $\sigma(T) > 0$ depends
continousley on T and satisfies $\lim_{T\rightarrow 0^+} \sigma(T) = 0$. The gained
$\sigma(T)$ is sufficient to make
contraction argument works and get the LWP for arbitrary large
 initial data in $H^{\epsilon}\times H^{1/2+\epsilon}\times H^{-1/2+\epsilon}$.

\vspace{1\baselineskip}
\par Thus the remained interesting open problem is to 
understand what would happen in the critical regularity space. In the 
scale invariant space setting, we don't have the luxury of gain
$\sigma(T)$ in the bilinear estimate, as otherwise, we would loss derivative and can't close the argument to get LWP for large data. 

\subsection{Statement of the main result}
In this paper, we prove that at least for small initial data in a Besov type space which has extra regularity in the angular variables and at the critical regularity, the solution is global well-posed and 
scattering. Define the function spaces $\dot{B}^{r, s}_{2,1}$ and $\dot{H}^{s}_{\Omega}$  by the following norms:
\[
\| f \|_{\dot{B}^{r, s}_{2,1}} := \| \langle \Omega \rangle^{s} f \|_{\dot{B}_{2,1}^{r}}, \quad\| f \|_{\dot{H}^{s}_{\Omega}} = \| \langle \Omega \rangle f \|_{\dot{H}^{s}}, \quad \langle \Omega \rangle^{s} f := f_{0} + (- \Delta_{\mathbb{S}^{2}})^{s/2} f.
\]
Here $\dot{B}_{2,1}^{r}$ is the homogeneous Besov space and $f_{0}$ is the radial part of $f$, which is 
\[
f_{0}(r)  = \frac{1}{m(\mathbb{S}^{2})} \int_{\mathbb{S}^{2}} f(r \omega) d \omega.
\]
Our main result is the following:
\begin{theorem}\label{maintheorem}
 For the DKG system (\ref{DKG5}) in (1+3)-Minkowski space with ${2}M > m>0$ or $M=m=0$. There are
 exist constants $C_{1}$ and $C_{2}$, such that for small initial data $ (\psi_{0},\phi_{0},\phi_{1}) \in \dot{B}_{2,1}^{0,1}\times \dot{B}_{2,1}^{1/2,1}\times \dot{B}^{-1/2,1}_{2,1}$
satisfies
\begin{equation}\label{radial}
  \|  (\psi_{0},\phi_{0},\phi_{1}) \|_{\dot{B}_{2,1}^{0,1}\times \dot{B}_{2,1}^{1/2,1}\times \dot{B}^{-1/2,1}_{2,1}} \leq C_{1},
\end{equation}
then there exists a global solution $(\psi(t), \phi(t))$ to (\ref{DKG5}) such that 
\begin{displaymath}
 \sup_{t\in (-\infty, +\infty)} \|(\psi(t),\phi(t), \partial_t \phi(t)\|_{\dot{B}_{2,1}^{0,1}\times \dot{B}_{2,1}^{1/2,1}\times \dot{B}^{-1/2,1}_{2,1}} \leq C_{2}.
\end{displaymath}
Moreover, the  solution depends smoothly on the initial data. Furthermore, if the initial data has extra smoothness, then the solution retains this extra smoothness. More precisely, if $(\psi_{0},\phi_{0},\phi_{1})$ also has finite 
$\dot{H}^{s}_{\Omega}\times \dot{H}^{r}_{\Omega}\times \dot{H}^{r-1}_{\Omega}$ norm for $s > 0 $ and $r > 1/2$, then there exists constant $C_{3}$ such that
\begin{equation}
\sup_{t\in (-\infty, +\infty)} \| (\psi(t),\phi(t),\partial_{t}\phi(t)) \|_{\dot{H}^{s}_{\Omega}\times \dot{H}^{r}_{\Omega}\times \dot{H}^{r-1}_{\Omega}} \leq C_{3}.
\end{equation}
\end{theorem}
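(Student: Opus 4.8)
\emph{Setup and diagonalization.} The plan is to recast the Dirac equation in half-wave variables and run a contraction in a scale-invariant, angularly-weighted space-time norm. First I would apply the spectral projections $\Pi_{\pm}=\Pi_{\pm}(\nabla)$ onto the positive and negative energy subspaces of $-i\alpha\cdot\nabla+M\beta$; these are explicit $4\times 4$ Fourier multipliers (for $M=0$ they reduce to $\tfrac12(I\pm\alpha\cdot\xi/|\xi|)$). Writing $\psi_{\pm}=\Pi_{\pm}\psi$ converts the spinor equation into two half-wave equations $(i\p_{t}\mp\langle\nabla\rangle_{M})\psi_{\pm}=\Pi_{\pm}(\phi\beta\psi)$, with $\langle\nabla\rangle_{M}=\sqrt{-\Delta+M^{2}}$, while the Klein-Gordon equation retains the quadratic source $\langle\beta\psi,\psi\rangle_{\C}$. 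I would then pass to the Duhamel integral equations for $(\psi_{\pm},\phi,\p_{t}\phi)$ and set up a fixed point in a ball of radius $\sim C_{1}$ in the critical space $\dot B^{0,1}_{2,1}\times\dot B^{1/2,1}_{2,1}\times\dot B^{-1/2,1}_{2,1}$, lifted to space-time by the natural wave and Klein-Gordon solution norms (atomic $U^{2}$/$V^{2}$-type spaces, or $X^{s,b}$-type spaces) carrying the extra angular weight $\langle\Omega\rangle$. Since the data space is scale invariant there is no room to spend a positive power of the length of the time interval, so every estimate must be performed at the critical exponent.

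\emph{Null structures.} Following D'Ancona--Foschi--Selberg, the two relevant bilinear interactions --- $\Pi_{\pm_{0}}(\phi\,\beta\,\Pi_{\pm_{1}}\psi)$ in the Dirac equation, and, after a duality pairing, $\langle\beta\,\Pi_{\pm_{1}}\psi,\Pi_{\pm_{2}}\psi\rangle_{\C}$ in the Klein-Gordon equation --- both carry a null form whose symbol is controlled by the angle $\angle(\xi_{1},\xi_{2})$ between the interacting frequencies in the ``parallel'' sign cases and is harmless otherwise. I would extract these symbols, quantify the gain as an extra fractional power of the angle (equivalently of the smallest of the three modulations), and combine it with the hypothesis $2M>m>0$ (or $M=m=0$): in the massive case the former forbids the dangerous space-time resonance between two mass-$M$ Dirac cones and the mass-$m$ Klein-Gordon cone, which keeps the largest modulation bounded below; the massless case is purely scale invariant and handled by the null forms alone.

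\emph{Bilinear estimates --- the crux.} The core is a family of frequency-localized bilinear estimates, dyadically decomposed in the output frequency, the two input frequencies, their angular separation, and the modulations, in the three regimes $\HH\to\mathbf{Low}$, $\HL$ and $\LH$. The $\HH\to\mathbf{Low}$ piece is the genuinely critical one: it is precisely the parallel-interaction regime responsible for Lindblad's obstruction in $\R$, and it is here that the extra angular regularity $\langle\Omega\rangle$ must be spent --- the null-form angular gain together with one angular derivative on the data must beat the logarithmic and derivative loss coming from the sum over parallel frequency tubes, uniformly in the low output frequency, after which the $\ell^{1}$ Besov summation closes. The $\HL$ and $\LH$ regimes are handled by standard bilinear $L^{2}$ (Klainerman--Tataru type) estimates and the transference principle, with the angular weight carried along harmlessly.

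\emph{Closing the argument, scattering, persistence.} With the bilinear estimates in hand the Duhamel map is a contraction on the small ball, yielding the global solution and the uniform bound with $C_{2}\lesssim C_{1}$; smooth (indeed analytic) dependence on the data is automatic from the multilinear structure of the iteration, and scattering follows because the iteration space embeds into $C_{t}$ of the data space and the Duhamel tail converges to a free solution. For the ``furthermore'' part I would re-run the same bilinear estimates in tame form, $\|B(f,g)\|_{\mathrm{high}}\lesssim\|f\|_{\mathrm{high}}\|g\|_{\mathrm{crit}}+\|f\|_{\mathrm{crit}}\|g\|_{\mathrm{high}}$, and bootstrap along the solution already constructed; smallness of the critical norm makes the resulting linear inequality for the $\dot H^{s}_{\Omega}\times\dot H^{r}_{\Omega}\times\dot H^{r-1}_{\Omega}$ norm close for all time. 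I expect the main obstacle to be exactly the $\HH\to\mathbf{Low}$ estimate at the scale-invariant exponent: there is zero margin, so the four-parameter dyadic bookkeeping must be arranged so that the null-form angular gain plus precisely one angular derivative \emph{exactly} cancel the logarithmic divergence over parallel tubes, while simultaneously the elliptic (non-resonant) and hyperbolic (resonant) modulation subregions both close --- leaking even a single logarithm or an $\epsilon$ of derivative would break the argument.
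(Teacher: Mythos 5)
Your overall skeleton (projection onto $\Pi_{\pm}$, Duhamel/contraction in a scale-invariant angularly-weighted space, null-form bilinear estimates split by frequency interaction, and the role of $2M>m$ in excluding the cone--hyperboloid resonance) matches the paper, but there is one substantive divergence that is likely to break your argument as stated, and it is exactly the point the paper is organized around. You propose to obtain the null structure ``following D'Ancona--Foschi--Selberg \ldots after a duality pairing,'' so that both bilinear interactions gain the angle $\angle(\xi_{1},\xi_{2})$ between the two \emph{inputs}. First, the bookkeeping is reversed: the Klein--Gordon source $\langle\beta\Pi_{a}\psi,\Pi_{b}\psi'\rangle$ carries the input--input angle explicitly (see (\ref{equation3000})), while it is the Dirac-equation term $\Pi_{a}(\phi\,\beta\,\Pi_{b}\psi)$ that, \emph{without} duality, only gains the angle between the Dirac input and the \emph{output} (its symbol is $\beta\Pi_{-a}(\xi)\Pi_{b}(\eta)$, see (\ref{equation20001})). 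Second, the paper deliberately avoids the duality route in this setting: transposing would force the dual test function into a space with \emph{negative} angular regularity, which is incompatible with the $\langle\Omega\rangle$-weighted critical framework. Instead it works with the input--output angle and pays for this choice: the High $\times$ High case where the output is small but the Dirac-input/output angle is large, and the Low $\times$ High case where the $\dot H^{1/2}$-type loss on $\phi$ must be recovered from that same angle, require the tailored decompositions (\ref{equation71000})--(\ref{equation71002}) and the arguments of Lemmas \ref{lowhighfar}, \ref{highhighnear}, \ref{lowhighnear}. Your claim that the High $\times$ Low and Low $\times$ High regimes are ``standard Klainerman--Tataru plus transference'' misses that the null structure is genuinely needed there too.

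The second issue is that the mechanism which actually defeats the logarithmic modulation divergence is left unspecified. Saying that ``one angular derivative must exactly cancel the logarithmic divergence over parallel tubes'' names the difficulty but not the tool: in the paper the cancellation comes from a specific space-time structure --- the null-frame-type component $Z_{\Omega,\lambda}$ (an $L^{1}_{t}$ norm of $\sup_{\omega}\|S^{\omega}_{\lambda,d}\cdot\|_{L^{p}_{x}}$ with $p\in(5,6)$ and a small modulation weight $(d/\lambda)^{1.1/p}$), combined with Sterbenz's angular-frequency-localized two-scale Strichartz estimate (Lemma \ref{twoscale}) and the angular concentration estimate, and with an interpolation between the null-form gain and the angular-sector scale (the parameter $\alpha$ in Lemma \ref{highhighnear}, plus the switch to sectors of size $\mu$ when $(d/\mu)^{1/2}\gg\mu$). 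Generic $U^{2}/V^{2}$ or $X^{s,b}$ spaces with an $\langle\Omega\rangle$ weight do not by themselves produce the off-diagonal summation over sectors that these estimates provide, so as written the central High $\times$ High near-cone estimate is asserted rather than proved. Your remarks on the mass condition, scattering, and persistence of higher angular regularity are consistent with the paper (the latter does follow from re-running the same bilinear estimates), but the two points above are real gaps relative to a workable proof.
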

As a byproduct of the proof of the theorem \ref{maintheorem}, we can also
derive the asymptotic behavior of solution. More precisely, 

\begin{theorem}\label{scattering}
Given any sufficiently small initial data $ (\psi_{0},\phi_{0},\phi_{1}) \in \dot{B}_{2,1}^{0,1}\times \dot{B}_{2,1}^{1/2,1}\times \dot{B}^{-1/2,1}_{2,1}$
satisfies (\ref{radial}), then there exists unique functions $(\psi_{0}^{+},\phi_{0}^{+},\phi_{1}^{+})$
and $(\psi_{0}^{-},\phi_{0}^{-},\phi_{1}^{-})$ $\in\dot{B}_{2,1}^{0,1}\times \dot{B}_{2,1}^{1/2,1}\times \dot{B}^{-1/2,1}_{2,1}$ such that the solution $(\psi(t),\phi(t))$ of (\ref{DKG5}) with initial data $(\psi_{0},\phi_{0},\phi_{1})$ approaches to the the solution 
$(\psi^{\pm}(t),\phi^{\pm}(t))$ of the associated linear system of (\ref{DKG5}) with initial data $(\psi_{0}^{\pm},\phi_{0}^{\pm},\phi_{1}^{\pm})$ as $t\rightarrow \pm\infty$. More precisely 
\begin{equation}
 \lim_{t\rightarrow \pm \infty} \| \psi(t) - \psi^{\pm}(t) \|_{\dot{B}^{0,1}_{2,1}} + \| \phi(t) - \phi^{\pm} (t) \|_{\dot{B}^{1/2,1}_{2,1}} + \| \partial_{t} \phi(t) - \partial_{t} \phi^{\pm}(t) \|_{\dot{B}^{-1/2,1}_{2,1}} = 0.
\end{equation}
Furthermore, the scattering operator which maps $(\psi_{0},\phi_{0},\phi_{1}) $ to $(\psi_{0}^{\pm},\phi_{0}^{\pm},\phi_{1}^{\pm}) $ is a local diffeomorphism in $ \dot{B}_{2,1}^{0,1}\times \dot{B}_{2,1}^{1/2,1}\times \dot{B}^{-1/2,1}_{2,1}$.
\end{theorem}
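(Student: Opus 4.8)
\medskip
\noindent\textbf{Sketch of proof of Theorem~\ref{scattering}.}
The plan is to bootstrap the scattering statement from the global a priori bound and the bilinear spacetime estimates that already underlie Theorem~\ref{maintheorem}. Recall that the solution there is obtained by a fixed point argument in a resolution space $X$, built over the angular-weighted Besov spaces, in which the Duhamel maps $\psi\mapsto \D_{D}(\phi\beta\psi)$ and $(\phi_{0},\phi_{1})\mapsto \D_{KG}(\langle\beta\psi,\psi\rangle_{\C})$ obey nonlinear estimates of the schematic form $\|\D(u\cdot v)\|_{X(I)}\lesssim \|u\|_{X(I)}\|v\|_{X(I)}$ on every time interval $I$ with a constant independent of $I$, and in which the free Dirac and Klein--Gordon evolutions $S_{D}(t)$, $S_{KG}(t)$ are bounded, uniformly in $t$ and both above and below, from the data space $\mathcal{H}:=\dot{B}^{0,1}_{2,1}\times\dot{B}^{1/2,1}_{2,1}\times\dot{B}^{-1/2,1}_{2,1}$. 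In particular the global solution $(\psi,\phi)$ of Theorem~\ref{maintheorem} lies in $X(\mathbb{R})$ with norm $\lesssim C_{1}$, and --- since the norms driving the iteration come from the dispersive/null-form component of $X$, which has finite time integrability --- one has $\|(\psi,\phi)\|_{X([T,\infty))}\to 0$ as $T\to+\infty$, and likewise as $T\to-\infty$.

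First I would define the candidate asymptotic states. Writing the solution in Duhamel form $\psi(t)=S_{D}(t)\psi_{0}+\int_{0}^{t}S_{D}(t-s)(\phi\beta\psi)(s)\,ds$ and using the group property $S_{D}(t-s)=S_{D}(t)S_{D}(-s)$, set
\[
\psi_{0}^{+}:=\psi_{0}+\int_{0}^{\infty}S_{D}(-s)(\phi\beta\psi)(s)\,ds,
\]
and analogously define $(\phi_{0}^{+},\phi_{1}^{+})$ from the Klein--Gordon Duhamel term; replacing $\int_{0}^{\infty}$ by $\int_{0}^{-\infty}$ gives the ${}^{-}$ states. Applying the bilinear estimate on $[t_{1},t_{2}]$ yields
\[
\Big\|\int_{t_{1}}^{t_{2}}S_{D}(-s)(\phi\beta\psi)(s)\,ds\Big\|_{\dot{B}^{0,1}_{2,1}}\lesssim \|\phi\|_{X([t_{1},t_{2}])}\,\|\psi\|_{X([t_{1},t_{2}])}\longrightarrow 0\qquad(t_{1},t_{2}\to+\infty),
\]
so the defining integrals converge in $\mathcal{H}$ and the asymptotic data are small. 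Then, since $\psi(t)-S_{D}(t)\psi_{0}^{+}=-\int_{t}^{\infty}S_{D}(t-s)(\phi\beta\psi)(s)\,ds$ and $S_{D}(t)$ is bounded on $\dot{B}^{0,1}_{2,1}$,
\[
\|\psi(t)-S_{D}(t)\psi_{0}^{+}\|_{\dot{B}^{0,1}_{2,1}}\lesssim \|\phi\|_{X([t,\infty))}\,\|\psi\|_{X([t,\infty))}\longrightarrow 0\qquad(t\to+\infty),
\]
and the same for $\phi$ and $\partial_{t}\phi$, which is the claimed convergence to the free solution $(\psi^{+},\phi^{+})$. Uniqueness of $(\psi_{0}^{\pm},\phi_{0}^{\pm},\phi_{1}^{\pm})$ is immediate: two admissible limits would force $S_{D}(t)$ and $S_{KG}(t)$ applied to the difference of data to tend to $0$ in $\mathcal{H}$, which by the lower bound on the free flows forces that difference to vanish.

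Next I would construct the inverse (wave) operators. Given small final data $(\psi_{0}^{+},\phi_{0}^{+},\phi_{1}^{+})\in\mathcal{H}$, I would choose $T$ so large that the free evolutions have small $X([T,\infty))$ norm and solve the final-state problem $\psi(t)=S_{D}(t)\psi_{0}^{+}-\int_{t}^{\infty}S_{D}(t-s)(\phi\beta\psi)(s)\,ds$ together with its Klein--Gordon analogue by Banach contraction in $X([T,\infty))$; smallness makes the map a contraction. The resulting datum at time $T$ is again small in $\mathcal{H}$, so Theorem~\ref{maintheorem} (applied with initial time $T$, using time-translation invariance) extends the solution globally, and the direct part above shows it scatters forward exactly to $(\psi_{0}^{+},\phi_{0}^{+},\phi_{1}^{+})$. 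This defines the map $W^{+}:(\psi_{0}^{+},\phi_{0}^{+},\phi_{1}^{+})\mapsto(\psi(0),\phi(0),\partial_{t}\phi(0))$, with $W^{-}$ built identically from the backward final-state problem. Because the nonlinearities are quadratic, each fixed point depends real-analytically on the data by the implicit function theorem in Banach spaces, so $W^{\pm}$ are smooth with derivative the identity at the origin, hence local diffeomorphisms of $\mathcal{H}$ near $0$. The scattering operator $(\psi_{0},\phi_{0},\phi_{1})\mapsto(\psi_{0}^{\pm},\phi_{0}^{\pm},\phi_{1}^{\pm})$ is then $(W^{\pm})^{-1}$ (and the operator relating the two asymptotic states is $(W^{+})^{-1}\circ W^{-}$), hence a local diffeomorphism near the origin.

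The main obstacle is the first paragraph's claim that the relevant component of the global solution has vanishing $X([T,\infty))$ norm as $T\to\infty$ in the \emph{critical}, scale-invariant topology. Unlike in the subcritical theory there is no positive power of the time length to absorb, so one must genuinely extract this smallness; it rests on the resolution space $X$ of Theorem~\ref{maintheorem} being chosen so that the norms carrying the bilinear estimates sit in $L^{q}_{t}$ for some finite $q$ --- an improved Strichartz- or null-form bound made available precisely by the extra angular regularity --- so that their tails are integrable and hence small. Granting that, every step above is a routine application of the bilinear estimates already established.
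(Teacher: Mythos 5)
Your overall plan (Duhamel representation of the asymptotic data, tail estimates, final--state contraction for the wave operators, smooth dependence giving the local diffeomorphism) is the standard one, but the step everything hinges on is not correct as stated for the spaces of this paper. You claim that $\|(\psi,\phi)\|_{X([T,\infty))}\to 0$ as $T\to+\infty$ because ``the norms driving the iteration have finite time integrability.'' The resolution space here is $F_{\Omega,\lambda}=[\langle\Omega\rangle^{-1}(X^{1/2,1}_{\lambda}+Y_{\lambda})\cap S_{\lambda}(L^{\infty}_{t}L^{2}_{x})\cap Z_{\Omega,\lambda}]$, and its $L^{\infty}_{t}L^{2}_{x}$ component does \emph{not} vanish on tails: the solution converges to a nonzero free wave, whose $L^{2}_{x}$ norm is conserved, so the restricted norm of the solution on $[T,\infty)$ stays bounded below. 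Moreover, the bilinear estimates of Section \ref{proof} are proved globally in time for modulation-localized pieces, and many of the individual terms are controlled through an $L^{\infty}_{t}L^{2}_{x}$ factor of one of the inputs; the paper never defines interval-restricted versions of these $X^{1/2,1}$-type spaces, and time truncation does not interact trivially with modulation localization. So the displayed inequality with both factors measured in $X([t_{1},t_{2}])$ is not available from what is proved, and even if it were, its right-hand side would not tend to zero. To make your route work you would have to recheck that every term in every bilinear lemma places at least one factor in a genuinely time-integrable norm (an $L^{1}_{t}$-, $L^{2}_{t}$- or $Z$-type norm) and set up a restriction/extension formalism for the modulation spaces --- neither of which is in your sketch.

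The paper avoids this entirely by arguing structurally on the global solution: by the definition of $F_{\Omega,\lambda}$, each frequency-localized piece splits into a homogeneous (free) part, a $\langle\Omega\rangle^{-1}X^{1/2,1}_{\lambda}\cap Z_{\Omega,\lambda}$ part, and a $\langle\Omega\rangle^{-1}Y_{\lambda}$ part. The free part scatters trivially; the $X^{1/2,1}\cap Z$ part tends to zero in the data norm as $t\to\pm\infty$ (its space-time Fourier support is summably away from the characteristic surface); and the $Y_{\lambda}$ part has $\Box$-image (resp.\ $L_{\pm}$-image) in $L^{1}_{t}L^{2}_{x}$, so its Duhamel tail converges absolutely and the asymptotic data $(\psi^{\pm}_{0},\phi^{\pm}_{0},\phi^{\pm}_{1})$ can be written down explicitly, with details referred to Sterbenz. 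Your final-state contraction argument for the wave operators and the smoothness/local-diffeomorphism claim goes beyond what the paper writes out (it only cites \cite{Sterbenz1,Sterbenz2}) and is reasonable in spirit, but it inherits the same unproved ingredient, namely smallness of interval-restricted norms; if you instead quantify the tails through the $Y_{\lambda}$ ($L^{1}_{t}L^{2}_{x}$) and $X^{1/2,1}\cap Z$ components as above, both the convergence and the construction of the inverse map can be carried out without interval-restricted bilinear estimates.
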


\subsubsection{Overview of the main difficulties}\label{maindiff}

\par Unlike in the local theory, if one wants to prove the global well-posedness,
 we can't treat the linear mass terms appeared in the left hand side of (\ref{DKG5}) as a part of
nonlinearities any more, the type of the leading linear equation matters in the evolution of global solutions. 
 When $M > 0$ ($m > 0$), the Dirac spinor field $\psi$ (meson field $\phi$) of (\ref{DKG5}) is of Klein-Gordon type and
 when $M=0$ ($m=0$),
the Dirac spinor field $\psi$ (meson field $\phi$) is of wave type.   
One of the main difficulties comes from the ratio of the size of $M$ and $m$.  One might notice from the statement  of theorem 
\ref{maintheorem} that the cases: when ${2} M \leq m$ and when $m=0, M>0$ are excluded. We postpone the explanation of why those cases are excluded and first give an overview of 
the full picture. 

\vspace{1\baselineskip}
\par In the proof of the bilinear estimate, which is crucial to close argument,
 the most troublesome type of interaction is when both the inputs and the output are close their corresponding
 characteristic hypersurfaces. In order to control this type of interaction, certain cancellation is needed. While luckily, 
inside the DKG system (\ref{DKG5}), there exists null structures inside both Dirac spinor field part and  the meson
field part. However, there are slightly difference between the null structures inside those two parts. The null
structure inside meson field is \emph{explicit} inside the nonlinearity, while as mentioned in the subsection \ref{previous}, 
the same type null structure inside the Dirac part revealed by D'Ancona, Foschi and
Selberg \cite{D'Ancona1} via \emph{duality argument}. In both parts, the null structure can help us to gain one degree of angle between the spatial frequencies of two inputs. While there is a potential problem to use duality argument in our setting, as one has to put the duality element into the space that has \emph{negative regularity} in angular variables, which is not pleasant  and will cause additional problems in the bilinear estimate. However, as one can see from (\ref{equation20001}), without duality argument, we can still gain one degree of angle, but now the angle is between the frequencies of Dirac input and the output. There are some differences between these two types of gained angles, for example, in the High $\times$ High interaction case, when the output is sufficiently small, then the angle between two inputs should be small, however the angle between the Dirac input and the output might be very large.  Hence how to use this gained angle properly is also the key to prove bilinear estimate. 

\vspace{1\baselineskip}
\par The other difficulty is that the null structure inside the DKG system is not strong enough to beat the drawback of  
the fact that we are in a low dimension setting. By ``not strong enough'', heuristically, we mean that
 the extra gain of parallel interaction provided by null structure is only $\theta$, here $\theta$ is
 the angle between the space-time frequencies of two inputs inside
 the null structure. In 3D case, 
the total gain from angular localization and the null structure would just cancel the total loss of modulation
 (distance to characteristic hypersurface) with nothing left thus would cause the logarithm divergence problem
 with respect to modulation. Hence, if the null structure is strong enough, i.e gain more than $\theta$, like the type of null
structure $Q_{0,0}(\cdot, \cdot)$ appeared in the Wave map problem which contributes $\theta^2$. Then the logarithm divergence
problem could be avoided, however we don't have the luxury here. For any two smooth well defined function $u, v$, the bilinear
operator $Q_{0,0}$ is defined as following:
$
Q_{0,0}(\cdot, \cdot): (u, v) \rightarrow  \partial_{t} u \partial_{t} v - \partial_{{i}} u \partial_{{i}} v.
$
\vspace{1\baselineskip}
\par We would like to mention that as the dimension decrease, the  extra
 gain from the angular localization would also decrease. In two spatial dimension
case, the total gained part won't cover the total loss thus the gap phenomena (the gap between the regularity 
predicted by scaling argument  and the regularity needed to have LWP) arise in 2D.
\vspace{1\baselineskip}

Now, we are ready to give an explanation for the case excluded, i.e when $2M \leq m$ and when $M >0$, $m=0$. For the case: $2M < m$, there exists a special scenario of space-time frequencies interaction, which is also the most problematic type.  Let's take $M=0, m > 0$ as an example and this scenario is when two inputs are sufficiently close to the cone while the
 output is also sufficiently close to the hyperboloid. Let's consider the High $\times$ Low interaction case, assume that 
the space time frequencies of inputs are $(|\xi_{1}|, \xi_1)$ and $(|\xi_{2}|,\xi_2) $, $(|\xi_{1}|, \xi_1) \sim 2^{\lambda}, (|\xi_{2}|, \xi_2) \sim {1}$, $\lambda \gg 1$,  then the output frequency is $(\tau, \xi) :=(|\xi_{1}| + |\xi_{2}|,\xi_{1}+ \xi_2)$ . Denote
\[
 \mathbf{C}_{\lambda}:= \{(\tau, \xi): |\tau| - |\xi| = 0, |(\tau, \xi)| \sim 2^{\lambda} \}, \quad \mathbf{H}_{\lambda}
 := \{ (\tau,\xi ): 
|\tau|^2 - |\xi|^2 = m^2, |(\tau, \xi)| \sim 2^{\lambda} \},
\]
therefore we could see that the distance between cone $\mathbf{C}_{\lambda}$  and hyperboloid $\mathbf{H}_{\lambda}$ has
size $m^2 2^{-\lambda}$. Suppose that modulation of output $(\tau, \xi)$ is $d:= ||\tau| - \sqrt{|\xi|^2 + m^2}|$ and the angle
between the two inputs is $\theta:= \angle(\xi_{1},   \xi_{2})$, then we have 
\begin{equation}\label{equation13}
 | \pm d + (1-\cos(\theta)) | \sim m^2 2^{-\lambda}.
\end{equation}
From (\ref{equation13}), we can see that when $d \ll 2^{-\lambda}$, then $\theta$ has size  $m 2^{-\lambda/2}$ and it
doesn't change too much as $d\rightarrow 0$. Hence for this scenario, the gained angle from the null structure
 won't be helpful to cover the loss of modulation. We feel that other ideas and observations might needed to deal with 
this case. For other possible value of $M, m$ such that ${2} M < m$,  one can check that very similar scenario can also happen(or one can see this point from  the discussion on (\ref{equation22080})  in section \ref{nonzeromass}). When $2M=m$, above scenario doesn't happen, however, one potential problem is that the modulation could be very small, while when the nonzero mass term presents, the characteristic hypersurface associated with $\psi_{\pm}$ is the entire hyperboloid instead of one branch (see (\ref{equation90000})), hence there are exist some cases that the type of angle between two inputs is not coincide with the type of null structure (recall the fact that the type of null structure only depends on the types of Dirac inputs), hence we can't actually gain this smallness.
\vspace{1\baselineskip}

While for the last excluded case: $M > 0, m=0$, actually we can gain one degree of angle for the high frequencies part.  Now the problematic part is the low frequency part, without loss of generality, let's assume that $M=1$.  Suppose that the spatial frequencies $|\xi_{1}|$ and $|\xi_{2}|$ are sufficiently small, e.g $|\xi_{1}| \approx |\xi_{2}| \approx C \mu \ll 1$, $C$ is a large constant and  the space-time frequencies is sufficiently close to the hyperboloid, i.e $|\tau_{i} \pm \sqrt{|\xi_{i}|^{2}+1}| \ll 1$ and  the output frequency is also very small $|\tau_{1} +\tau_{2}| + |\xi_{1}+\xi_{2}| \approx \mu \ll 1.$ In this case, the angle between the spatial frequencies of two inputs is possibly big, i.e, \emph{we can't gain smallness from the null structure}. One can verify that in this case the modulation of the output frequency has size $\mu$. Basically, we have to estimate the term of type $S_{\mu,\mu}Q_{a,b} (S^{a}_{1, \bullet < c \mu}\psi_{1}, S^{b}_{1, \bullet < c \mu} \psi_{2})$, here $c$ is a sufficiently small constant. Let's remind the reader that, in this case, the characteristic hypersurface for the output is the entire cone, while for the two Dirac inputs are the entire hyperboloid. Hence, for this type of interaction, heuristically, \emph{we will loss $\mu^{3/2}$ (loss $\mu^{2}$ from $\Box^{-1}$ and gain $\mu^{1/2}$ from the level of regularity itself) which is too much to recover if without gained smallness $\mu$ from null structure}. While this scenario won't happen if $2M \geq m >0$ and $M=m=0$.  As in the massless case, when modulation is small, $|\tau|$ and $|\xi|$ should have comparable size, which is $1$ in the case listed here, hence we could gain $\mu$ from null structure, while if $2M \geq m > 0$, then for this scenario, the modulation of output will have size $1$ and $(-\Box+m^{2})^{-1}$ is like a constant hence won't loss any $\mu$ and the low frequency part won't cause any problem.
 \subsubsection{Strategy of proof}

Inspired from the work of Daniel Tataru \cite{Tataru1,Tataru2} on wave equations and wave maps, the work of Terence Tao 
\cite{Tao, Tao2, Tao1} on wave equations and wave maps and the work of Jacob Sterbenz \cite{Sterbenz1,Sterbenz3,Sterbenz2} on 
general wave equations. Our strategy is to construct appropriate function spaces and then prove that the desired bilinear estimates holds for all types of interactions: Low $\times$High, High $\times$ Low and High $\times$ High interactions of two Dirac fields and  interactions of one Dirac field and one meson field. 

\vspace{1\baselineskip}
While as mentioned in the previous subsubsection, the null
structure inside DKG system is not as strong as the wave map case, the logarithm divergence problem 
arises. However, from the  work of Jacob Sterbenz \cite{Sterbenz1,Sterbenz3,Sterbenz2},
 we could see that if we possess extra regularity with respect to the
angular variables, the parallel interaction can also be eliminated. Heuristically, it's natural
to expect that  such extra regularity condition 
can make up the weakness of the null structures inside the DKG system. The main improvements 
after imposing such extra regularity assumption are 
the improved Strichartz estimates and the
angular concentration lemma. Incorporating those improved estimates into 
the construction of the appropriate function spaces is another goal of this paper. 
\vspace{1\baselineskip}

Among all possible values of $m, M$ such that ${2}M > m>0$ or $M=m=0$, the
the most difficult case is the massless case, i.e, 
$m=M=0$. In the main body part
, we will restrict ourself to the massless case, and in section \ref{nonzeromass}, we will show 
that the method used in the massless case is robust enough
to prove all other possible nonzero mass $m,M$ s.t ${2}M > m > 0$.

\subsection{Outline}
In section \ref{notation}, 
we will introduce notations and  reduce the DKG system into 
a more favorable formulation. In section \ref{nullstructure}, we will 
introduce the null structure 
inside the DKG system and record some lemmas that will be used throughout this paper. 
In section \ref{Functionspace}, we will introduce some frequency localization 
operators and then construct the function spaces where the solution $(\phi, \psi)$ lies in. In section \ref{Bilinear}, we 
will introduce some bilinear decomposition lemmas and angular decomposition lemmas, which will be used to 
prove the bilinear estimates. In section \ref{proof}, 
we will give the detail proof of the main theorems for the massless
 case. In the last section,
 we will show that the method used in section \ref{proof} is robust enough
 to be applied to all other cases.
 
\vspace{1\baselineskip}

\noindent \textbf{Acknowledgement.\,}The author would like thank his advisor Alexandru Ionescu for his continuous encouragement and support, thank Sung-Jin Oh for introducing him this topic and having many useful discussions and thank Jonathan Luk  and Timothy Candy for providing helpful suggestions on revising this paper. He would also like to express gratitude to his friends: Cong Liu, Li Xiao, Jinyi Jin and Yang Zhao etc who be there with him during the summer in Chengdu, where a major part of this work was done. The author was supported by a Yongjin fellowship (2011).

\section{Notations and Preliminaries}\label{notation}

 Given two quantities $A$ and $B$, we use $A\lesssim B$ and $B \gtrsim A$ to denote $A \leq C \cdot B$ for some
 fixed large constant. The notations $A \lesssim_{a} B$ and $B \gtrsim_{a} A$  denote $A\leq C(a)\cdot B$ for some 
constant $C(a)$ that only depends on $a$. For any given quantity $A$, we use $A+$(resp. $A-$) to
 denote some constant that is arbitrarily close to A and larger ( reap. smaller) than $A$, i.e $
\forall \epsilon > 0$ sufficiently small, we may replace  $A+$(resp. $A-$) by $A+\epsilon$ (resp. $A - \epsilon$), 
however we don't require uniformity on $\epsilon$. The expression $a+b+$(reps. $a+b-$) will represent the summation of $a$ and $b+$( resp. $b-$) throughout the whole paper. Recall the following reduced DKG system (\ref{DKG5}) we introduced in the section 
\ref{introduction} :
\begin{equation}\label{DKG1}
\left\{\begin{array}{lr}
-i (\partial_{t} + \alpha \cdot \nabla) \psi + M \beta \psi =   \phi \beta \psi, &\, \\
(-\square + m^{2}) \phi = \langle \beta \psi, \psi \rangle_{\C},&\, \\
\end{array}\right.
\end{equation}
for simplicity, we will abbreviate the  
inner product $\langle\, , \, \rangle_{\C}  $  as $\langle\, , \,\rangle$. To further simplify the DKG system,
 we can diagonalize the matrix operator $- (\partial_{t} + \alpha \cdot \nabla)$ and project $\psi$ into
 eigenspaces corresponding to the eigenvalues $\pm |\xi|$, here $\xi$ is the symbol of operator $-i\nabla$. 
Define the symbols
\begin{displaymath}
\Pi_{\pm}(\xi) = \frac{1}{2}(I \pm \hat{\xi}\cdot \alpha), \quad \hat{\xi} = \frac{\xi}{|\xi|},
\end{displaymath}
and the associated operator as $\Pi_{\pm}(D)$,  here $D = -i \nabla$.  From the definition, it's easy to
 verify that  $\Pi_{\pm}(-\xi) = \Pi_{\mp}(\xi)$ and the following orthogonal decomposition holds
\begin{displaymath}
\psi = \psi_{+} + \psi_{-}, \quad \psi_{\pm} = \Pi_{\pm }(D) \psi,
\end{displaymath}
we'd like to mention that the orthogonality property comes from the fact that $(\alpha \cdot \hat{\xi})^{2} = I$.
\par After applying $\Pi_{\pm}(D)$ operator on the both hands side of Dirac equation and using
 the relation $\Pi_{\pm}(D) \beta = \beta \Pi_{\mp}(D)$, we can further simplify the system (\ref{DKG1}) into  the following formulation

\begin{equation}\label{DKG2}
\left\{\begin{array}{lr}
(-i\partial_{t}  + |D| ) \psi_{+} = -M \beta \psi_{-} + \Pi_{+}(D)(\phi \beta \psi), & \\
(-i \partial_{t} - |D|) \psi_{-} = -M \beta \psi_{+} + \Pi_{-}(D) (\phi \beta \psi), & \\
(-\square + m^{2}) \phi = \langle \beta \psi, \psi \rangle. & \\
\end{array}\right.
\end{equation}
 Denote the operators $L_{\pm} = (- i \partial_{t} \pm |D| )$, 
hence $\square = L_{+}L_{-} = L_{-}L_{+}$.   From now on  and until the section \ref{nonzeromass}, 
let's assume that $ m=M=0$ and focus on the following model:

\begin{equation}\label{masslessDKG}
\mbox{Massless DKG:}\quad  \left\{\begin{array}{lr}
(-i\partial_{t}  + |D| ) \psi_{+} =  \Pi_{+}(D)(\phi \beta \psi), & \\
(-i \partial_{t} - |D|) \psi_{-} = \Pi_{-}(D) (\phi \beta \psi), & \\
-\square \phi = \langle \beta \psi, \psi \rangle, & \\
\end{array}\right.
\end{equation}
with initial data $(\psi_{+,0}, \psi_{-,0}, \phi_{0}, \phi_{1})$, where $\psi_{\pm, 0} = \Pi_{\pm}(D) \psi_{0}$. From the Duhamel formula, we could represent the solution of  (\ref{masslessDKG}) as
\begin{equation}\label{equation2001}
\psi_{\pm} = \tilde{\psi}_{\pm,0} + V_{\pm} \mathcal{N}_{\pm}(\phi, \psi),\quad \phi = \tilde{\phi}_{0} + V \mathcal{N}(\psi\,,\,\psi),
\end{equation}
where $\tilde{\phi}_{0}$ is the linear homogeneous wave solution of $\square \tilde{\phi}_{0} =0$ with initial data $(\phi_{0}, \phi_{1})$ and
\begin{displaymath}
\tilde{\psi}_{\pm, 0} (t)= e^{\mp i t |D|}\psi_{\pm, 0}.
\end{displaymath}
The notation $V f$ denotes the parametrix for the inhomogenous wave 
equation with zero initial data, i.e $ u = V f$ if and only if
\begin{displaymath}
\square u = f, \quad u(0, \cdot) = 0\,\,and\,\, \partial_{t} u(0, \cdot ) = 0.
\end{displaymath}

\par Let $E$ denote any fundamental solution to the homogenous  wave equation, i.e $\square E = \delta$. 
Then we can represent the parametrix operator $V$ via the following formula
\begin{displaymath}
 V(f) = E \ast f - W(E\ast f),
\end{displaymath}
where, for any smooth well defined function $g(t,x)$, $W(g)$ denotes the solution of linear homogeneous
 wave equation with initial data $(g(0,x), \partial_t g (0,x))$. Similarly, the notation $V_{\pm} f$ denotes the parametrix for Dirac equations, i.e $u = V_{\pm}  f$  if and only if u solves the following equation
\begin{displaymath}
L_{\pm} u = f, \quad u(0, \cdot) = 0.
\end{displaymath}
 The notation $\mathcal{N}_{\pm}(\phi, \psi)$ in (\ref{equation2001}) denotes the nonlinearities $\Pi_{\pm}(D)(\phi \beta \psi)$ 
and $\mathcal{N}(\psi,\psi)$ denotes the nonlinearity $\langle \beta \psi, \psi \rangle$. 
After simple Fourier analysis, we could see that the characteristic hypersurface 
 is the light cone for $\phi$, lower cone for $\psi_{+}$ and upper cone for $\psi_{-}$.

\section{Null Structure and Strichartz estimates }\label{nullstructure}

For detail calculation and discussion about the null structures of the 3D DKG system revealed by P. D'Ancona, D. Foschi, and S. Selberg, 
please refer to \cite{D'Ancona1}.  Here, we omit the calculation and only record the necessary parts as granted.
\par
 To see the null structure in the nonlinearity, we first use the orthogonal decomposition $\psi = \psi_{-} + \psi_{+}$  to decompose the nonlinearities $\mathcal{N}_{\pm}(\phi, \psi)$ and $\mathcal{N}(\psi,\psi)$,  and for 
fixed pair of sign $(a, b)$, we define the following bilinear operators:
\begin{displaymath}
Q_{a, b}  : (\psi, \psi^{'}) \longrightarrow \langle \beta \Pi_{a}(D) \psi, \Pi_{b}(D) \psi^{'} \rangle,\quad \tilde{Q}_{a, b} : (\phi , \psi) \longrightarrow \Pi_{a}(D)(\beta \phi \Pi_{b}(D)\psi),
\end{displaymath}
hence
 \begin{equation}\label{nonlinearitydecomposition}
 \mathcal{N}(\psi,\psi) = \sum_{a, b\in \{+,-\} } Q_{a, b}(\psi, \psi),\quad \mathcal{N}_{a}(\phi,\psi) = \sum_{b\in\{+,-\}} \tilde{Q}_{a, b}(\phi, \psi).
\end{equation}
\par After  calculating the symbols associated with the bilinear operators $Q_{a, b}$ and $\tilde{Q}_{a, b}$, we 
can see that the null condition is indeed satisfied. Since
\begin{equation}\label{equation20001}
\widehat{\tilde{Q}_{a, b}(\phi, \psi)} (t, \xi) = \int \beta \Pi_{-a}(\xi) \Pi_{b}(\eta) \hat{\phi}(\xi-\eta) \hat{\psi}(\eta)\, d\,\eta,
\end{equation}
\begin{equation}\label{equation3000}
\widehat{Q_{a, b}(\psi, \psi^{'})}(t,\xi) =\int \langle \beta\Pi_{-b}(\eta)\Pi_{a}(\xi+\eta) \hat{\psi}(t,\xi+\eta), \hat{\psi^{'}}(t,\eta) \rangle d \eta ,
\end{equation}
we can see that  the symbol of $\tilde{Q}_{a, b}$ and $Q_{a, b}$ are $\beta \Pi_{-a}(\xi) \Pi_{b}(\eta)$ and  $\beta\Pi_{-b}(\eta)\Pi_{a}(\xi+\eta)$ respectively.  From \cite{D'Ancona1}[Lemma 2], we have: 
\begin{lemma}\label{angle}
$\Pi_{+}(\xi) \Pi_{-}(\eta) = O(\theta),$ where\,$\theta = \angle (\eta, \xi).$
\end{lemma}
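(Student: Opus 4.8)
The plan is to prove Lemma \ref{angle} by a direct computation with the explicit projection symbols $\Pi_{\pm}(\xi) = \tfrac12(I \pm \hat\xi\cdot\alpha)$, exploiting only the algebraic relation $(\alpha\cdot\hat\omega)^2 = I$ (equivalently $\alpha^j\alpha^k + \alpha^k\alpha^j = 2\delta^{jk}I$, which follows from (\ref{rules}) since $\alpha^j = \gamma^0\gamma^j$). First I would write the product out:
\[
\Pi_{+}(\xi)\Pi_{-}(\eta) = \tfrac14\big(I + \hat\xi\cdot\alpha\big)\big(I - \hat\eta\cdot\alpha\big) = \tfrac14\big(I + (\hat\xi - \hat\eta)\cdot\alpha - (\hat\xi\cdot\alpha)(\hat\eta\cdot\alpha)\big).
\]
Next I would simplify the last term using the anticommutation relation: for unit vectors, $(\hat\xi\cdot\alpha)(\hat\eta\cdot\alpha) = (\hat\xi\cdot\hat\eta)I + \tfrac12[\hat\xi\cdot\alpha,\hat\eta\cdot\alpha]$, so the scalar part contributes $(1 - \hat\xi\cdot\hat\eta)I$. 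Since $1 - \hat\xi\cdot\hat\eta = 1 - \cos\theta = O(\theta^2) = O(\theta)$, this piece is already of the desired size.

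The main point then reduces to bounding the remaining non-scalar pieces, namely $(\hat\xi - \hat\eta)\cdot\alpha$ and $\tfrac12[\hat\xi\cdot\alpha,\hat\eta\cdot\alpha]$. For the first, I would use $|\hat\xi - \hat\eta| = 2\sin(\theta/2) = O(\theta)$ together with the fact that each $\alpha^j$ is a fixed bounded matrix, so $\|(\hat\xi-\hat\eta)\cdot\alpha\| = O(\theta)$. For the commutator term, $[\hat\xi\cdot\alpha,\hat\eta\cdot\alpha]$ is antisymmetric in $\hat\xi,\hat\eta$ and vanishes when $\hat\xi = \hat\eta$; more concretely it equals $\sum_{j,k}\hat\xi_j\hat\eta_k[\alpha^j,\alpha^k]$, which one can rewrite so that each coefficient is of the form $\hat\xi_j\hat\eta_k - \hat\xi_k\hat\eta_j$, a $2\times2$ minor of the pair of unit vectors, hence bounded by $|\hat\xi\wedge\hat\eta| = \sin\theta = O(\theta)$. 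Collecting the three contributions gives $\|\Pi_+(\xi)\Pi_-(\eta)\| = O(\theta)$.

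I do not expect a serious obstacle here; the only mild subtlety is making precise the sense in which ``$\Pi_+(\xi)\Pi_-(\eta) = O(\theta)$'' is meant — presumably as a bound on the operator (matrix) norm uniformly in $\xi,\eta$, with $\theta = \angle(\eta,\xi) \in [0,\pi]$ — and keeping track of the elementary trigonometric identities $1-\cos\theta \le \theta^2/2$ and $\sin\theta \le \theta$ so that everything is genuinely controlled by the first power of $\theta$ (which is what is used in the sequel, since $\theta \le \pi$ makes $\theta^2 \lesssim \theta$). An alternative, essentially equivalent, route is to note that $\Pi_+(\xi)\Pi_-(\xi) = 0$ identically (from $(\hat\xi\cdot\alpha)^2 = I$) and then write $\Pi_+(\xi)\Pi_-(\eta) = \Pi_+(\xi)\big(\Pi_-(\eta) - \Pi_-(\xi)\big) = -\tfrac12\Pi_+(\xi)\big((\hat\eta - \hat\xi)\cdot\alpha\big)$, which immediately yields the bound $O(|\hat\eta - \hat\xi|) = O(\theta)$ with no commutator computation at all; I would likely present this shorter argument as the main proof.
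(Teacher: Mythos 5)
Your proposal is correct: both the expanded computation and the shorter argument via $\Pi_{+}(\xi)\Pi_{-}(\xi)=0$ and $\Pi_{-}(\eta)-\Pi_{-}(\xi)=-\tfrac12(\hat\eta-\hat\xi)\cdot\alpha$, $|\hat\eta-\hat\xi|=2\sin(\theta/2)\lesssim\theta$, are valid, with the estimate understood in matrix norm uniformly in $\xi,\eta$. The paper itself gives no proof of this lemma, simply quoting it as Lemma 2 of D'Ancona--Foschi--Selberg \cite{D'Ancona1}, and the short second route you propose is essentially the standard argument given in that reference, so there is no substantive difference in approach.
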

From above lemma, we can see that the symbol of type $\Pi_{-a}(\cdot)\Pi_{b}(\cdot)$ vanishes when the spatial frequencies of two inputs are parallel and in the \emph{same direction} if $a\cdot b > 0$.
While  if $a\cdot b < 0$, the symbol  vanishes
when the spatial frequencies of two inputs are parallel and in the \emph{opposite direction}. 
Those facts are very helpful to analyze the interaction of 
two inputs when both of them are near to their corresponding characteristic hypersurfaces. 
One important observation is that the null structure depends only on spatial frequencies while not on time, 
thus we are free to use the information about null structures when the time variable is fixed.

\begin{lemma}[Angular frequency localized two scale Strichartz estimate \cite{Sterbenz3}]\label{twoscale}

Assume that $f_{1,N}$ has unit frequency and its angular frequency localized around $N$ in $3$ 
spatial dimension space. By angular localized around $N$, we mean that $\mathcal{P}_{N} f_{1, N} = f_{1,N}$, 
and here $\mathcal{P}_{N}$ is the spectrum projection operator associated with $\Delta_{\mathbb{S}^{2}}$. 
Let $0 < \mu \lesssim 1$ be a given constant and $\{ Q_{\alpha} \}$ be a partition of $\mathbb{R}^{3}$ into 
cubes of side length $\sim \frac{1}{\mu}$. Then for every $0 < \eta$, there is a $C_{\eta}$ and 
$r_{\eta} > 4$ depending on $\eta$, such that $r_{\eta} \rightarrow 4$ as $\eta \rightarrow 0$ 
such that the following estimates hold:
\begin{enumerate}
\item[(i)] Two-Scale angular frequency localized Strichartz estimate
\begin{equation}
\|  (\sum_{\alpha} \| e^{it |\nabla| } f_{1, N} \|_{L^{2}(Q_{\alpha})}^{r_{\eta}} )^{\frac{1}{r_{\eta}}} \|_{L^{2}_{t}} \lesssim C_{\eta} \mu^{-1/2-2\eta} N^{1/2+\eta} \| f_{1, N} \|_{L^{2}_{x}}.
\end{equation}
\item[(ii)] Angular frequency localized Strichartz estimate
\begin{equation}\label{equation20002}
  \| e^{i t |\nabla|} f_{1,N} \|_{L^{2}_{t} L^{r_{\eta}}_{x}} \lesssim N^{1/2+\eta} \| f_{1,N} \|_{L^{2}_{x}}.
\end{equation}
\end{enumerate}
\end{lemma}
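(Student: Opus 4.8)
The plan is to establish the single--scale estimate (ii) first, then the two--scale estimate (i); since both are ``angularly improved'' Strichartz estimates, the common strategy is an angular Littlewood--Paley decomposition that reduces matters to a single frequency cap, on which the wave flow degenerates, after an anisotropic rescaling, to a model dispersive flow at unit scale whose Strichartz estimates are classical.

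\textbf{Step 1: reduction to a single cap.} Cover the unit--frequency annulus $\{|\xi|\sim1\}$ by a boundedly overlapping family of caps $\{\kappa\}$ of angular radius $\sim N^{-1}$, so that $\#\{\kappa\}\sim N^{2}$, and write $f_{1,N}=\sum_{\kappa}f_{\kappa}$; bounded overlap of the frequency supports gives $\|f_{1,N}\|_{L^{2}_{x}}^{2}\sim\sum_{\kappa}\|f_{\kappa}\|_{L^{2}_{x}}^{2}$. The group velocity of $e^{it|\nabla|}f_{\kappa}$ is confined to $\kappa$, so at each fixed time the profiles $\{e^{it|\nabla|}f_{\kappa}\}_{\kappa}$ occupy, up to rapidly decaying tails, essentially disjoint angular sectors of $\mathbb{R}^{3}$; in particular $\|e^{it|\nabla|}f_{1,N}\|_{L^{2}(Q)}^{2}\sim\sum_{\kappa}\|e^{it|\nabla|}f_{\kappa}\|_{L^{2}(Q)}^{2}$ for every cube $Q$. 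Combining this with standard almost--orthogonality manipulations---Minkowski's inequality in $\ell^{r_{\eta}/2}_{\alpha}$, the subadditivity $(\sum_{\kappa}b_{\kappa})^{2/r_{\eta}}\le\sum_{\kappa}b_{\kappa}^{2/r_{\eta}}$ (valid since $r_{\eta}\ge2$), and the $\ell^{2}$--orthogonality above---reduces both (i) and (ii) to their single--cap analogues, i.e.\ with $f_{1,N}$ replaced by a single $f_{\kappa}$.

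\textbf{Step 2: the single--cap estimate.} Rotate so that $\kappa$ is centred at $e_{1}$; then $f_{\kappa}$ is Fourier supported in a slab $\{\xi_{1}\sim1,\ |\xi'|\lesssim N^{-1}\}$, $\xi=(\xi_{1},\xi')\in\mathbb{R}\times\mathbb{R}^{2}$, on which $|\xi|=\xi_{1}+|\xi'|^{2}/(2\xi_{1})+O(N^{-4})$. Thus, in null coordinates adapted to $\kappa$ and after the anisotropic rescaling $\xi'\mapsto N\xi'$, $x'\mapsto N^{-1}x'$, $t\mapsto N^{-2}t$, the flow $e^{it|\nabla|}f_{\kappa}$ becomes a model dispersive flow---a transverse $2$D Schr\"odinger--type evolution, sheared along the null direction of $\kappa$---acting on unit--frequency data. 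Feeding this model flow its classical Strichartz estimates (the $L^{4}_{t,x}$ bound, the $L^{\infty}_{t}L^{2}_{x}$ energy bound, and estimates at exponents slightly off the forbidden $L^{2}_{t}L^{\infty}_{x}$ endpoint) and undoing the Jacobians of the rescaling yields the single--cap bound $\|e^{it|\nabla|}f_{\kappa}\|_{L^{2}_{t}L^{r_{\eta}}_{x}}\lesssim C_{\eta}N^{1/2+\eta}\|f_{\kappa}\|_{L^{2}_{x}}$: the factor $N^{1/2}$ is the honest Jacobian count, while the restriction $r_{\eta}>4$ and the degeneration $C_{\eta}\to\infty$, $r_{\eta}\to4$ as $\eta\to0$ record the cost of interpolating off that endpoint. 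This gives (ii). For (i) one performs the same rescaling, under which each cube $Q_{\alpha}$ of side $\mu^{-1}$ becomes an anisotropic box; the $\ell^{r_{\eta}}_{\alpha}$--summed, box--localized bound for the model flow follows from its local $L^{2}$ smoothing / square--function estimate, and this is precisely what produces the weight $\mu^{-1/2}$ (a unit tube crosses a box of length $\mu^{-1}$ during a time interval of length $\lesssim\mu^{-1}$), the extra $\mu^{-2\eta}$ being the price of recombining over boxes at the exponent $r_{\eta}$ rather than $2$.

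\textbf{Main obstacle.} The heart of the matter is the endpoint: the sharp $L^{2}_{t}L^{\infty}_{x}$ Strichartz estimate is \emph{false} for the $3$D wave equation, so the whole result rests on quantifying how much the angular localization $\mathcal{P}_{N}$ compensates for this failure and threading that gain through the $TT^{*}$/Christ--Kiselev machinery so that simultaneously $r_{\eta}\to4$ and $C_{\eta}$ stays finite as $\eta\to0$. The second delicate point, specific to (i), is the double bookkeeping---the parabolic rescaling of the cap composed with the localization to the $\mu^{-1}$--cubes---which has to be performed carefully enough that the exponents come out as the sharp $\mu^{-1/2-2\eta}N^{1/2+\eta}$ and not the lossier powers that a crude H\"older inequality on each cube would give.
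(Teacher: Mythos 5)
The paper itself offers no proof of this lemma: it is quoted verbatim from \cite{Sterbenz3}, so there is no internal argument to compare yours against, and your proposal has to stand on its own. Judged that way, your Step 2 is essentially sound: for a single cap of angular width $N^{-1}$ the transverse rescaling does reduce matters to a unit-frequency $2$D Schr\"odinger-type flow, for which $L^{2}_{t}L^{r}_{x}$ holds for $r>4$ with constant blowing up as $r\to4$, and undoing the Jacobians gives the single-cap constant $\sim N^{2/r_{\eta}}\le N^{1/2}$ (the Knapp packet shows both this and the two-scale constant $\mu^{-1/2}N^{1/2}$ are sharp). It is also worth noting that (ii) follows from (i) with $\mu\sim1$ by local Bernstein at unit frequency (modulo routine tails), so the whole weight of the lemma is in the two-scale estimate (i).

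The genuine gap is Step 1, the recombination over the $\sim N^{2}$ caps, which is where the lemma's real content sits and which you dismiss as standard almost-orthogonality. First, the claimed fixed-time local orthogonality $\|e^{it|\nabla|}f_{1,N}\|_{L^{2}(Q)}^{2}\sim\sum_{\kappa}\|e^{it|\nabla|}f_{\kappa}\|_{L^{2}(Q)}^{2}$ for \emph{every} cube does not follow from group-velocity confinement: $f_{\kappa}$ is an arbitrary $L^{2}$ function with Fourier support in the cap, not a single wave packet, so the evolutions of different caps overlap spatially for all times; and local $L^{2}(Q)$ orthogonality of frequency-localized pieces requires the frequency separation to exceed the inverse side length, i.e.\ caps separated by $\gtrsim\mu$, whereas adjacent caps are only $N^{-1}$ apart and $N^{-1}\ll\mu$ is perfectly possible. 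Your Minkowski/$\ell^{r_{\eta}/2}$ bookkeeping does close \emph{if} that equivalence is granted, so this single unproved claim carries the entire estimate (i). Second, for (ii) as you set it up the situation is worse: passing from single-cap bounds in $L^{2}_{t}L^{r_{\eta}}_{x}$, $r_{\eta}>4$, to the sum of $\sim N^{2}$ caps with only an $\ell^{2}$-type loss is a reverse square-function/decoupling statement for the light cone, not a Minkowski manipulation; crude triangle-plus-Cauchy--Schwarz loses a factor $N$, and the classical $L^{4}$ biorthogonality count for the cone in $3+1$ dimensions still loses a power of $N$, either of which destroys the sharp $N^{1/2+\eta}$. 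This is precisely why the argument in \cite{Sterbenz3} does not reduce to individual caps and recombine: the two-scale estimate is obtained there by coupling local smoothing/energy estimates on the $\mu^{-1}$ cubes with the angular localization directly. As written, then, your proposal identifies the right single-cap model but is missing the key idea of how the hypothesis $\mathcal{P}_{N}f_{1,N}=f_{1,N}$ is used to perform (or avoid) the summation over caps, and the lemma is not proved.
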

As a byproduct of angular frequency localized Strichartz estimate (\ref{equation20002}), we could derive the following improved Strichartz estimate very easily:
\begin{equation}\label{equation30}
  \| e^{i t |\nabla|} f \|_{L^{2}_{t} L^{4+}_{x}} \lesssim \| \langle \Omega \rangle^{\frac{1}{2}+} f \|_{L^{2}_{x}}.
  \end{equation}

\section{Function Spaces}\label{Functionspace}

In this section, most of  notations are consistent with the notations used in \cite{Sterbenz2}.
 For simplicity, we only record those necessary parts and refer the readers to \cite{Sterbenz1, Sterbenz3, Sterbenz2} for detail.
We strongly recommend readers to read those papers as warming up to better understand this paper.

\par Let $\phi$ be a one dimensional smooth bump function, 
such that $\phi (s) = 1 $ for $|s|\leq 1$ and $\phi(s) = 0$ for $|s| \geq 2$.
 For $\lambda \in 2^{\mathbb{Z}}$,  define the dyadic scaling of
 $\phi$ by $\phi_{\lambda}(s) = \phi(s/\lambda)$. We can define the localization operators
 which are localized with respect to the spatial frequency,
 space-time frequency, distance to the cone (modulation),  
distance to the lower cone and distance to the upper cone by the following localization functions:
\begin{equation}
p_{\lambda}(\xi) = \phi_{2\lambda}(|\xi|) - \phi_{1/2 \lambda} (|\xi|),\quad
s_{\lambda} (\tau, \xi) = \phi_{2\lambda}(|(\tau, \xi)|) - \phi_{1/2 \lambda}( | (\tau, \xi) |  ),
\end{equation}
\begin{equation}
c_{d}(\tau, \xi) = \phi_{2 d} ( |\tau | - |\xi | ) - \phi_{1/2 d} (|\tau| - |\xi|),
\quad
c^{+}_{d}(\tau, \xi) = \phi_{2 d} ( \tau + |\xi | ) - \phi_{1/2 d} (\tau + |\xi|),
\end{equation}
\begin{equation}
c^{-}_{d}(\tau, \xi)= \phi_{2 d}  ( \tau  - |\xi | ) - \phi_{1/2 d} ( \tau - |\xi|).
\end{equation}
We define the associated Littlewood-Paley type localization operators
 $P_{\lambda}, S_{\lambda}, C_{d}, C^{+}_{d}, C_{d}^{-}$  which have symbols $p_{\lambda}, s_{\lambda}, c_{d}, c_{d}^{+}, c_{d}^{-}$ respectively. Denote $S_{\lambda, d} = S_{\lambda} C_{d}$ , $S_{\lambda, d}^{\pm} = S_{\lambda} C^{\pm}_{d}$  and
\begin{equation}
S_{\lambda, \bullet \leq d} = \sum_{ \delta, \delta \leq d} S_{\lambda, \delta},\quad S_{\lambda, \bullet \leq d}^{\pm} = \sum_{ \delta, \delta \leq d} S_{\lambda, \delta}^{\pm}, \quad S_{\lambda,\bullet \geq d} = \sum_{ \delta, \delta \geq d} S_{\lambda, \delta},\quad S_{\lambda, \bullet \geq d}^{\pm} = \sum_{ \delta, \delta \geq d} S_{\lambda, \delta}^{\pm}.
\end{equation}

For simplicity and without cause any confusion, if 
we use  $\pm$ in the upper subscript, it will represent with respect 
to the lower cone for $``+''$ (corresponds to $\psi_{+}$) and to upper
 cone for$ ``-''$ (corresponds to $\psi_{-}$)  throughout this paper.

Besides aforementioned dyadic projection operators, 
there is another type of localization operators which are very important
 to the bilinear estimates and are so called spatial angular localization operators.

\par For any given small number $0 < \eta\lesssim 1$, we decompose the unit sphere $\mathbb{S}^{2}$
 in $\mathbb{R}^{3}$ into bounded overlapping angular sectors. And each angular sector has
 angular size $\eta$. We label the corresponding sectors by their angles $\omega = \xi/|\xi|$ 
and denote them as $b^{\omega}_{\eta}$. The angular localization operator 
associated with the symbol $b^{\omega}_{\eta}$ is denoted by $B_{\eta}^{\omega}$. Define
\begin{equation}
S^{\omega}_{\lambda, d} = B^{\omega}_{(d/\lambda)^{1/2}} P_{\lambda} S_{\lambda, d},  \quad S^{\omega}_{\lambda, \bullet \leq d} = B^{\omega}_{(d/\lambda)^{1/2}} P_{\lambda} S_{\lambda, \bullet \leq d},
\end{equation}
and from definition, it's easy to see that $S^{\omega}_{\lambda, d}$ projects space-time frequency to a  
parallelepiped of size $\lambda \times \sqrt{\lambda d} \times \sqrt{\lambda d} \times d$. 
We have the following lemma regarding on the boundedness of above localization operators:

\begin{lemma}[\cite{Sterbenz1}]
\begin{enumerate}
\item[(i)] The following multipliers are given by $L_{t}^{1}L^{1}_{x}$ kernels and are uniformly bounded in $L_{t}^{1}L^{1}_{x}$: $\lambda^{-1}\nabla S_{\lambda}$, $B^{\omega}_{(d/\lambda)^{1/2}} P_{\lambda}$, $S_{\lambda, d}^{\omega}$, $(\lambda d) V S^{\omega}_{\lambda, d}$ and  $(\lambda d) V_{\pm} S^{\omega,\pm}_{\lambda, d}$ and also those operators  are bounded in mixed Lebesgue spaces $L^{q}_{t}L^{r}_{x}$.

\item[(ii)] The following multipliers are uniformly bounded in the $L^{q}_{t}L^{2}_{x}$ spaces for $1\leq q \leq +\infty$: $S_{\lambda, d}$, $S_{\lambda,\bullet \leq d}$, $S_{\lambda, d}^{\pm}$ and $S_{\lambda, \bullet \leq d}^{\pm}$.

\end{enumerate}

\end{lemma}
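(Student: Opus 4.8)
The plan is to establish, for each operator on the list in (i), that it is convolution with an $L^{1}$ kernel of norm $O(1)$ uniformly in $\lambda,d,\omega$ — in $(t,x)$ for the full space-time multipliers, in $x$ alone for the purely spatial ones — after which Young's inequality yields at one stroke uniform boundedness in $L^{1}_{t}L^{1}_{x}$ and in every mixed space $L^{q}_{t}L^{r}_{x}$, $1\le q,r\le\infty$. The mechanism is the affine invariance of this kernel class: up to a rotation of frequency space and an anisotropic dilation, each symbol in (i) is supported on the unit cube, on which the appropriately normalized symbol is smooth with all derivatives $O(1)$; hence its inverse Fourier transform is a Schwartz function of $L^{1}$ norm $O(1)$, the normalizing Jacobian cancels exactly against the Jacobian of the corresponding change of variables in physical space, and re-centering the frequency support away from the origin only multiplies the kernel by a unimodular factor. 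Concretely, $\lambda^{-1}\nabla S_{\lambda}$, with symbol $\lambda^{-1}(i\xi)s_{\lambda}(\tau,\xi)$, reduces under $(\tau,\xi)\mapsto\lambda(\tau,\xi)$ to the $\lambda=1$ model; $B^{\omega}_{(d/\lambda)^{1/2}}P_{\lambda}$, after a rotation carrying $\omega$ to a fixed axis and a dilation fitting the box $\lambda\times\sqrt{\lambda d}\times\sqrt{\lambda d}$ to the unit cube, reduces to the model spatial bump; and $S^{\omega}_{\lambda,d}=B^{\omega}_{(d/\lambda)^{1/2}}P_{\lambda}S_{\lambda,d}$ reduces the same way with the modulation variable adjoined, its support being the parallelepiped of size $\lambda\times\sqrt{\lambda d}\times\sqrt{\lambda d}\times d$ adapted to the light cone. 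The operators $(\lambda d)VS^{\omega}_{\lambda,d}$ and $(\lambda d)V_{\pm}S^{\omega,\pm}_{\lambda,d}$ are the same in spirit: on the support of $S^{\omega}_{\lambda,d}$ one has $|\tau^{2}-|\xi|^{2}|=||\tau|-|\xi||\,||\tau|+|\xi||\sim d\lambda$ (respectively $|\tau\pm|\xi||\sim d$ and $|\tau\mp|\xi||\sim\lambda$ for the half-wave cutoffs), so $(\lambda d)(\tau^{2}-|\xi|^{2})^{-1}$ is $O(1)$ with derivative bounds matching the cone-adapted parallelepiped — but the definition of the parametrix $V$ needs an extra remark, postponed to the last paragraph.

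For part (ii), when $q=2$ everything is immediate from Plancherel in $(t,x)$, since $s_{\lambda,d}$, $c_{d}$, $c^{\pm}_{d}$ and their $\bullet\le d$ aggregates are bounded symbols. For $q\ne2$ a symbol in $(\tau,\xi)$ need not act boundedly on $L^{q}_{t}L^{2}_{x}$, so one first conjugates $C^{\pm}_{d}$ by $e^{\mp it|D|}$, an isometry of $L^{q}_{t}L^{2}_{x}$ for every $q$; after this conjugation the variable $\tau+|\xi|$ (resp.\ $\tau-|\xi|$) occurring in $c^{\pm}_{d}$ becomes simply $\tau$, so the operator is convolution in the single variable $t$ with the inverse Fourier transform in $\tau$ of the bump $\phi_{2d}-\phi_{1/2d}$, an $L^{1}_{t}$ function of norm $O(1)$, and Young/Minkowski in $t$ (the operator acting trivially in $x$) concludes. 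For $C_{d}$ one splits $\tau>0$ and $\tau<0$: on the support of $S_{\lambda}$ (recall $S_{\lambda,d}=S_{\lambda}C_{d}$) one has $|\tau|\sim|\xi|\sim\lambda$, so once $d\ll\lambda$ the two halves are separated by a gap $\sim\lambda$ and smooth cutoffs $\chi_{\pm}(\tau/\lambda)$ — themselves $t$-convolutions with $L^{1}_{t}$ kernels of norm $O(1)$ — reduce matters to the $C^{\pm}_{d}$ case, while the range $d\sim\lambda$ is covered directly by the part-(i) kernel estimate. The $\bullet\le d$ versions follow with no extra work, since $\sum_{\delta\le d}c_{\delta}$ telescopes to the single bump $\phi_{2d}(|\tau|-|\xi|)$ (resp.\ $\phi_{2d}(\tau\pm|\xi|)$), which has exactly the structure already treated.

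The step I expect to be the main obstacle is the parametrix $V$ in (i). By definition $Vf=E\ast f-W(E\ast f)$ contains a free wave carrying the Cauchy data of $E\ast f$, and a free wave has a nondecaying $L^{1}_{x}$ profile, so that correction term is \emph{not} given by a globally integrable space-time kernel, and the ``$L^{1}$ kernel plus Young'' reasoning cannot be applied to it verbatim. The resolution is that once $S^{\omega}_{\lambda,d}$ has acted, the input is supported at modulation $\sim d$, bounded away from the light cone, so there $(\tau^{2}-|\xi|^{2})^{-1}$ is a bona fide tempered-distribution Fourier multiplier and $VS^{\omega}_{\lambda,d}$ may be identified with it; equivalently, one works throughout with the half-wave parametrices $V_{\pm}$ and the one-sided cutoffs $C^{\pm}_{d}$, for which $L_{\pm}^{-1}C^{\pm}_{d}$, after the conjugation used in part (ii), is an honest $t$-convolution with an $L^{1}_{t}$ kernel of norm $O((\lambda d)^{-1})$. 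This identification, and the check that it does not disturb the function-space norms used later, is the one genuinely delicate point; it is carried out in \cite{Sterbenz1,Sterbenz2}, to which we refer for the remaining routine details.
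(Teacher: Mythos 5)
You are reconstructing, essentially correctly, the standard argument behind this lemma (the paper itself gives no proof; it quotes \cite{Sterbenz1}): for (i), reduce each symbol by a rotation and an anisotropic dilation to a unit-scale bump, so the kernel is Schwartz with $O(1)$ $L^{1}_{t,x}$ norm and Young's inequality gives all mixed-norm bounds; for (ii), conjugate by $e^{\mp it|D|}$, which is an isometry of $L^{q}_{t}L^{2}_{x}$, so that $C^{\pm}_{d}$ becomes convolution in $t$ alone with an $L^{1}_{t}$ kernel of norm $O(1)$. Those parts are fine; the only care needed in your $C_{d}$ splitting is that the cutoffs $\chi_{\pm}(\tau/\lambda)$ must be taken as compactly supported bumps at scale $\lambda$ (legitimate since $|\tau|\sim\lambda$ on $\mathrm{supp}(s_{\lambda}c_{d})$ when $d\ll\lambda$); a sgn-type cutoff has no $L^{1}_{t}$ kernel and would fail exactly at the endpoints $q=1,\infty$.

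Two claims in your final paragraph, however, do not hold as stated. First, the identification of $VS^{\omega}_{\lambda,d}$ with the multiplier $(\tau^{2}-|\xi|^{2})^{-1}S^{\omega}_{\lambda,d}$ is false: $Vf$ has vanishing Cauchy data at $t=0$ by definition, so $VS^{\omega}_{\lambda,d}f$ differs from the multiplier output by the free wave carrying the $t=0$ data of $\Box^{-1}S^{\omega}_{\lambda,d}f$ --- precisely the term you yourself noted has no integrable space--time kernel, and which moreover is not uniformly bounded on $L^{q}_{t}L^{r}_{x}$ for $r\neq 2$ (frequency-localized free evolution loses powers of $\lambda|t|$ on $L^{r}_{x}$, $r\neq2$). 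The correct reading --- and the form in which the result is proved in \cite{Sterbenz1,Sterbenz2} --- is that the lemma concerns the symbol-division operators $\Box^{-1}$ and $L_{\pm}^{-1}$ restricted to modulation $\sim d$; the zero-data correction is a homogeneous solution that is never estimated through this kernel lemma but through the $S_{\lambda}(L^{\infty}_{t}L^{2}_{x})$ component of the spaces, which is also how the paper actually uses $V$ later. Second, your claimed $L^{1}_{t}$-kernel bound $O((\lambda d)^{-1})$ for $L_{\pm}^{-1}C^{\pm}_{d}$ is off by a factor of $\lambda$: $L_{\pm}$ is first order, its symbol has size $|\tau\pm|\xi||\sim d$ on $\mathrm{supp}\,c^{\pm}_{d}$, so the conjugated kernel has $L^{1}_{t}$ norm $O(d^{-1})$, and the correctly normalized operator is $d\,V_{\pm}S^{\omega,\pm}_{\lambda,d}$ (the factor $\lambda d$ is appropriate only for the second-order parametrix); your ``same in spirit'' computation with $(\tau^{2}-|\xi|^{2})^{-1}$ simply does not apply to $V_{\pm}$, and the normalization printed in the statement should be regarded as inherited from the wave case.
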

With above defined localization operators, we are ready to define the function spaces. Define
\[
F_{\Omega, \lambda} =  [\langle \Omega\rangle^{-1}(X^{1/2,1}_{\lambda }+ Y_{\lambda}) \cap S_{\lambda} (L^{\infty}_{t}L^{2}_{x})\cap Z_{\Omega, \lambda}],\,\,F_{\Omega, \lambda}^{\pm} = [ \langle \Omega\rangle^{-1}(X^{1/2,1}_{\lambda,\pm} + Y_{\lambda}^{\pm}) \cap S_{\lambda} (L^{\infty}L^{2})\cap Z_{\Omega, \lambda}^{\pm}],
\]
here $X^{1/2, 1}_{\lambda}$ is the space of functions with space-time 
frequency localized in the support of $s_{\lambda}(\tau, \xi)$ and equipped with the norm
\begin{displaymath}
\| u \|_{X^{1/2,1}_{\lambda}} := \sum_{d:\,\,d \in 2^{\mathbb{Z}}} d^{1/2} \| S_{\lambda, d} u \|_{L^{2}_{t}L^{2}_{x}},
\end{displaymath}
and $Y_{\lambda} = \lambda V S_{\lambda}(L^{1}_{t}L^{2}_{x})$ is the space of functions with  space-time frequency localized in the support of $s_{\lambda}(\tau, \xi)$ and equipped with the norm
\begin{displaymath}
\| u \|_{Y_{\lambda}} := \lambda^{-1} \| \square S_{\lambda} u \|_{L^{1}_{t}L^{2}_{x}}.
\end{displaymath}
The function space $Z_{\Omega, \lambda}$ is defined by the following norm:
\[
\| f\|_{Z_{\Omega,\lambda}} := \lambda^{-\frac{1}{2}+\frac{3}{p}} \sum_{d\lesssim \lambda}
 \Big( \frac{d}{\lambda}\Big)^{\frac{1.1}{p}} \int \sup_{\omega} \| S^{\omega}_{\lambda, d} f \|_{L^{p}_{x}} \, d\, t,\quad \emph{here $p\in (5,6)$ is a fixed constant},
\]
bu using Sobolev embedding lemma \ref{Sobolev} and the angular concentration lemma \ref{Angular}, we have
\[
\| f\|_{Z_{\Omega, \lambda}} \lesssim \lambda^{-\frac{1}{2}+\frac{3}{p} + 3(\frac{1}{2}- \frac{1}{p})} \sum_{d \lesssim \lambda}  \Big( \frac{d}{\lambda}\Big)^{(- \frac{1}{p}+ 1-)+\frac{1.1}{p}}  \lambda^{-1} d^{-1} \| S_{\lambda, d} \Box \langle \Omega\rangle f \|_{L^{1}_{t} L^{2}_{x}},
\]
\[
\lesssim \lambda^{-1}\| S_{\lambda} \Box \langle \Omega \rangle  f\|_{L^{1}_{t}L^{2}_{x}} \lesssim \| S_{\lambda} \langle \Omega \rangle f \|_{Y_{ \lambda}},
\]
hence
\[
\langle \Omega \rangle^{-1} Y_{\lambda}  \subseteq Z_{\Omega, \lambda}, \quad F_{\Omega, \lambda} =  [\big( \langle \Omega\rangle^{-1}X^{1/2,1}_{\lambda }\cap Z_{\Omega, \lambda}+ \langle \Omega\rangle^{-1} Y_{\lambda} \big) \cap S_{\lambda} (L^{\infty}_{t}L^{2}_{x})],
\]

\begin{equation}
\sup_{d\lesssim \lambda} \Big( \frac{d}{\lambda}\Big)^{\frac{1.1}{p}}  \| \sup_{\omega} \| S^{\omega}_{\lambda, d} f \|_{L^{p}_{x}}\|_{L^{1}_{t}} \lesssim \lambda^{\frac{1}{2}-\frac{3}{p}} \| f \|_{F_{\Omega, \lambda}}.
\end{equation}

Very similarly, we can define function space $Y_{\lambda}^{\pm}: = V_{\pm} S_{\lambda}( L^{1}_{t}L^{2}_{x}) $ and $Z_{\Omega, \lambda}^{\pm}$ as  the space of functions whose space-time frequencies are localized in the support of $s_{\lambda}(\tau, \xi)$ and equipped with the following norms respectively
\begin{displaymath}
\| u \|_{Y^{\pm}_{\lambda}} = \| L_{\pm} S_{\lambda} u \|_{L^{1}_{t}L^{2}_{x}}, \quad  \| f\|_{Z_{\Omega,\lambda}^{\pm}} := \lambda^{-\frac{1}{2}+\frac{3}{p}} \sum_{d\lesssim \lambda}
 \Big( \frac{d}{\lambda}\Big)^{\frac{1.1}{p}} \int \sup_{\omega} \| S^{\omega, \pm}_{\lambda, d} f \|_{L^{p}_{x}} \, d\, t.
\end{displaymath}

For the inhomogeneous angular derivative $\langle \Omega\rangle$, one of the key properties that we'd like to mention here is that $\langle \Omega \rangle$ commute with the Fourier transform 
operator, more precisely, for any $s\in \mathbb{R}$,
\[
\widehat{\langle \Omega\rangle^{s}} f = \langle \Omega\rangle^{s} \widehat{f} \]
and when $s\geq 0$, heuristically, one can distribute $\langle \Omega\rangle^{s}$ as the usual chain rule.

\par 
Notice that $Y_{\lambda}$ norm and $Y_{\lambda}^{\pm}$ norm are defined inspire from the Duhamel formula
 of Wave equation and Dirac equation respectively, and 
the reason why we incorporate $L^{\infty}_{t}L^{2}_{x}$ norm into the definition is to measure the part
of space-time frequencies that lie on the characteristic hypersurfaces, as the $L^{\infty}_{t}L^{2}_{t}$
 norm of the part that outside the characteristic hypersurface is dominated by $Y_{\lambda}$ norm and $X^{1/2,1}_{\lambda}$ 
norm (resp. $Y_{\lambda}^{\pm}$ norm and $X^{1/2,1}_{\lambda,\pm}$), one can verify
 this fact
from the Duhamel formula and the Stricharz estimates.

Now we can glue above defined norm for dyadic pieces together to define the main function spaces, which are defined by the following norms: 
\begin{equation}
\| \phi \|_{F^{s}_{\Omega}}^{2} = \sum_{\lambda \in 2^{\mathbb{Z}}} \lambda^{2 s} \| S_{\lambda} \phi \|_{F_{\Omega,\lambda}}^{2},\quad \| \psi_{\pm} \|_{F^{r, \pm}_{\Omega}}^{2} = \sum_{\lambda \in 2^{\mathbb{Z}}} \lambda^{2 r} \| S_{\lambda} \psi_{\pm} \|_{F_{\Omega,\lambda}^{\pm}}^{2},\quad s > 1/2, r > 0,
\end{equation}
\begin{equation}
 \| \phi \|_{F^{{1}/{2}}_{\Omega}} =  \sum_{\lambda \in 2^{\mathbb{Z}}} \lambda^{\frac{1}{2}} \| S_{\lambda} \phi \|_{F_{\Omega,\lambda}},\quad \| \psi_{\pm} \|_{F^{\pm}_{\Omega}} = \sum_{\lambda \in 2^{\mathbb{Z}}}  \| S_{\lambda} \psi_{\pm} \|_{F_{\Omega, \lambda}^{\pm}},
\end{equation}
\begin{equation}
 \| \psi\|_{\widetilde{F}_{\Omega}} = \| \psi_{+}\|_{F^{+}_{\Omega}} + \| \psi_{-}\|_{F^{-}_{\Omega}} ,\quad \| \psi\|_{\widetilde{F}^{r}_{\Omega}} = \| \psi_{+}\|_{F^{r,+}_{\Omega}} + \|\psi_{-}\|_{F^{r,-}_{\Omega}}, r > 0 .
\end{equation}
Recall the fact that $\Pi_{\pm}(D) \Pi_{\mp}(D) = 0 $, thus from above definitions, we could easily see that 
$$\|\psi_{\pm} \|_{\widetilde{F}_{\Omega}} = \| \psi_{\pm} \|_{F^{\pm}_{\Omega}}, \quad \| \psi_{\pm}\|_{\widetilde{F}^{r}_{\Omega}} = \| \psi_{\pm}\|_{F^{r,\pm}_{\Omega}}.$$

From the trace method which allows one to transfer the 
estimate from the space of solutions to the homogeneous wave equation
 to the $X^{1/2,1}_{\lambda}$ space,  we can derive the following estimates from the improved Strichartz estimate (\ref{equation30}),
\begin{equation}
\| S_{1} u \|_{L^2_{t}L^{4^{+}}_{x}} \lesssim
\|\langle \Omega\rangle^{\frac{1}{2}+} u \|_{X^{1/2,1}_{1}} + 
\| S_{1}(\langle \Omega\rangle^{\frac{1}{2}+}  u) \|_{L^{\infty}_{t}L^{2}_{x}}.
\end{equation}

From  the Duhamel formula and the improved Strichartz estimate (\ref{equation30}), we also have the following estimate: \begin{equation}
\| S_{1} u \|_{L^2_{t}L^{4^{+}}_{x}} \lesssim
\| \langle \Omega\rangle^{\frac{1}{2}+}  u \|_{Y_{1}} + \| S_{1}(\langle \Omega\rangle^{\frac{1}{2}+}  u) \|_{L^{\infty}_{t}L^{2}_{x}}.
\end{equation}
Hence from the definition of function space $F_{\Omega, \lambda}$, we have
\begin{equation}\label{strichartzembedding}
\| S_{1}  u \|_{L^2_{t}L^{4^{+}}_{x}} \lesssim \| \langle \Omega\rangle^{- (\frac{1}{2}-)} u \|_{F_{\Omega, 1}}.
\end{equation}

After applying scaling argument, we could get the following estimate for general space-time frequency localized function
\begin{equation}\label{equation2}
\| S_{\lambda} f \|_{L^2_{t}L^{4^{+}}_{x}} \lesssim \lambda^{\frac{1}{4}{+}} \| \langle \Omega\rangle^{-(\frac{1}{2}-)} u \|_{F_{\Omega, \lambda}}.
\end{equation}

As $X_{\lambda}^{1/2,1} \subset S_{\lambda}(L^{\infty}_{t}L^{2}_{x})$, naturally
 we have $ \square X_{\lambda}^{1/2,1} \subset \square S_{\lambda}( L^{\infty}_{t}L^{2}_{x})$, and by duality,
 we could derive
\begin{equation}
V S_{\lambda}(L^{1}_{t}L^{2}_{x})\subset (\square X_{\lambda}^{1/2,1} )^{'} = (\lambda X_{\lambda}^{-1/2,1})^{'}= \lambda^{-1} X^{1/2, \infty}_{\lambda},
\end{equation} 
recall that we also have an obvious inclusion relation: $ X^{1/2, 1}_{\lambda} \subset  X^{1/2, \infty}_{\lambda}$,
hence, we have
\begin{equation}\label{equation3}
d^{1/2} \|S_{\lambda, d} f \|_{L^2_{t}L^2_{x}} \leq \| \langle \Omega\rangle^{-1} f \|_{F_{\Omega, \lambda}},\quad for\,\, d \in 2^{\mathbb{Z}},0 < d \leq \lambda.
\end{equation}
Similar argument could give us an analogue of (\ref{equation3}):
\begin{equation}
 d^{1/2} \|S_{\lambda, d}^{\pm} f_{\pm} \|_{L^2_{t}L^2_{x}} \leq \| \langle \Omega\rangle^{-1}   f \|_{F_{\Omega, \lambda}^{\pm}},\quad for\,\, d \in 2^{\mathbb{Z}},0 < d \leq \lambda.
\end{equation}

%\[\| f\|_{Z_{\Omega,\lambda}} := \lambda^{-\frac{1}{2}} \sum_{d\lesssim \lambda}
 %\Big( \frac{d}{\lambda}\Big)^{0+} \int \sup_{\omega} \| S^{\omega}_{\lambda, d} f \|_{L^{\infty}_{x}} \, d\, t
%\]Here above defined norm is a prototype, we might change $L^{1}L^{\infty}$ to $L^{1}L^{p}$ with $2 \leq p\leq \infty$, define similar space as following:%

\begin{lemma}[Sobolev Embedding Estimate]\label{Sobolev}
Let $f$ be a test function on $\mathbb{R}^{3}$, then one has the following frequency localized estimate:
\begin{equation}
\| B^{\omega}_{\eta} P_{1} f \|_{L^{p}_{x}} \lesssim \eta^{2(\frac{1}{r}- \frac{1}{p})} \| f \|_{L^{r}_{x}},
\end{equation}
and also by scaling argument, we could derive
\begin{equation}
\| B^{\omega}_{\eta} P_{\lambda} f \|_{L^{p}_{x}} \lesssim \eta^{2(\frac{1}{r}- \frac{1}{p})} \lambda^{3(\frac{1}{r}- \frac{1}{p})} \| f \|_{L^{r}_{x}}
\end{equation}
\end{lemma}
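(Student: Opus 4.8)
The plan is to prove this as a Bernstein-type (Young's inequality) estimate. The starting observation is that the operator $B^{\omega}_{\eta}P_{1}$ restricts the Fourier support of $f$ to the set where $|\xi|\sim 1$ (from $P_{1}$) and $\angle(\xi,\omega)\lesssim\eta$ (from the angular sector of opening $\eta$); after a rotation sending $\omega$ to $e_{1}$, this region is contained in a rectangular box $R_{\eta}$ of dimensions $\sim 1\times\eta\times\eta$, so $|R_{\eta}|\sim\eta^{2}$. Thus $B^{\omega}_{\eta}P_{1}f=K_{\eta}^{\omega}\ast f$ where $K_{\eta}^{\omega}$ is the convolution kernel of the (smooth) Fourier multiplier $b^{\omega}_{\eta}\,p_{1}$ adapted to $R_{\eta}$ (if one does not wish to assume the angular cutoffs are smooth, insert a fattened smooth multiplier $\tilde b^{\omega}_{\eta}\tilde p_{1}$, equal to $1$ on the support of $b^{\omega}_{\eta}p_{1}$ and supported in a comparable dilate of $R_{\eta}$, as a reproducing factor).

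First I would establish the kernel bound $\|K_{\eta}^{\omega}\|_{L^{s}_{x}}\lesssim|R_{\eta}|^{1-1/s}=\eta^{2(1-1/s)}$ for every $1\le s\le\infty$, uniformly in $\omega$ and $\eta$. This follows because, after the rotation, the anisotropic rescaling $(\xi_{1},\xi_{2},\xi_{3})\mapsto(\xi_{1},\eta^{-1}\xi_{2},\eta^{-1}\xi_{3})$ turns the symbol into a fixed Schwartz bump independent of $\eta$, so $K^{\omega}_{\eta}$ is (a rotate of) $\eta^{2}$ times an $\eta$-anisotropic dilate of a fixed Schwartz function, whose $L^{s}$ norm scales exactly as $\eta^{2}\cdot(\eta^{-1}\eta^{-1})^{1/s}=\eta^{2(1-1/s)}$. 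Then Young's inequality with exponents $1+\tfrac1p=\tfrac1s+\tfrac1r$ (valid precisely when $r\le p$, which is the implicit hypothesis and the only range where the claimed gain makes sense) gives
\[
\| B^{\omega}_{\eta}P_{1}f\|_{L^{p}_{x}}=\|K^{\omega}_{\eta}\ast f\|_{L^{p}_{x}}\le\|K^{\omega}_{\eta}\|_{L^{s}_{x}}\,\|f\|_{L^{r}_{x}}\lesssim\eta^{2(1/r-1/p)}\|f\|_{L^{r}_{x}},
\]
which is the first inequality. (If the fattened-multiplier route is used, one applies Young once with the fattened kernel and once at the $s=1$ endpoint to the operator $B^{\omega}_{\eta}P_{1}$ itself, the latter being bounded on $L^{r}_{x}$ uniformly in $\omega,\eta$.)

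For the general dyadic scale, I would use that the angular cutoffs are homogeneous of degree $0$, so $b^{\omega}_{\eta}(\lambda\xi)=b^{\omega}_{\eta}(\xi)$ and $p_{\lambda}(\lambda\xi)=p_{1}(\xi)$; hence if $g=B^{\omega}_{\eta}P_{\lambda}f$ then $g(\lambda^{-1}\cdot)=B^{\omega}_{\eta}P_{1}\big(f(\lambda^{-1}\cdot)\big)$. Applying the unit-frequency estimate to $f(\lambda^{-1}\cdot)$ and undoing the change of variables $x\mapsto\lambda x$ (which produces $\lambda^{-3/p}$ on the left and $\lambda^{3/r}$ on the right) yields the extra factor $\lambda^{3(1/r-1/p)}$, giving the second inequality.

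The argument is essentially routine; the only points requiring care are (i) checking that once the low frequencies are removed by $P_{1}$, the curved angular sector genuinely sits inside a box of dimensions $1\times\eta\times\eta$, so that the Bernstein heuristic applies with the correct power $\eta^{2}$ of the volume rather than some degenerate scaling near the origin, and (ii) the bookkeeping with the overlapping sectors $\{b^{\omega}_{\eta}\}$, i.e.\ choosing the (possibly fattened) cutoffs with finitely overlapping supports and symbol bounds uniform in $\omega$ and $\eta$, which is standard for the Whitney-type angular decomposition in use here.
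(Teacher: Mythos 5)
Your proof is correct: the containment of the unit-frequency angular sector in a $1\times\eta\times\eta$ box, the anisotropic kernel bound $\|K^{\omega}_{\eta}\|_{L^{s}}\lesssim\eta^{2(1-1/s)}$, Young's inequality with $1+\tfrac1p=\tfrac1s+\tfrac1r$, and the degree-zero homogeneity of $b^{\omega}_{\eta}$ in the rescaling step are exactly the standard Bernstein-type argument for this lemma. The paper itself gives no proof (it quotes the estimate from Sterbenz's work and only remarks that the second inequality follows ``by scaling argument''), and your argument, including the scaling step, coincides with that standard route, so there is nothing to add beyond the minor observation that the rescaled symbol is a family of bumps with seminorm bounds uniform in $\eta,\omega$ rather than literally a single fixed function, which is all the argument needs.
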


\begin{lemma}[Angular Concentration Estimate]\label{Angular}
For any test function $f$ defined on $\mathbb{R}^{3}$, and any $2\leq p < +\infty$ one has the following estimate:
\begin{equation}\label{weightedangular}
\sup_{\omega} \| B^{\omega}_{\eta}\, f \|_{L^{p}_{x}} \lesssim \eta^{s} \| \langle \Omega \rangle^{s} f \|_{L^{p}_{x}},
\end{equation}
where $0\leq s < \frac{2}{p}$. In particular, if $f$ is spherically symmetric, we could have the following estimate
\begin{equation}
\sup_{\omega} \| B^{\omega}_{\eta} f \|_{L^{p}_{x}} \leq \eta^{2/p} \| f \|_{L^{p}_{x}}.
\end{equation}
\end{lemma}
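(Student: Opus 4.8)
\emph{Proof proposal.} The plan is to strip off the radial variable: since the statement is posed on $\mathbb{R}^{3}$ but both $B^{\omega}_{\eta}$ and $\langle\Omega\rangle^{s}$ act only in the angular variables and commute with radial dilations, I would reduce it to a single estimate on $\mathbb{S}^{2}$ and close that with one application of H\"older's inequality against the Sobolev embedding on a $2$-dimensional manifold. Writing $x=r\theta$ in polar coordinates, and using that $(\langle\Omega\rangle^{s}f)(r\cdot)=\langle\Omega\rangle^{s}\bigl(f(r\cdot)\bigr)$ for the corresponding operator on $\mathbb{S}^{2}$ (recall $\langle\Omega\rangle^{s}$ is the mean value plus $(-\Delta_{\mathbb{S}^{2}})^{s/2}$), one has
\[
\|B^{\omega}_{\eta}f\|_{L^{p}_{x}}^{p}=\int_{0}^{\infty}\|b^{\omega}_{\eta}\,f(r\cdot)\|_{L^{p}(\mathbb{S}^{2})}^{p}\,r^{2}\,dr,\qquad \|\langle\Omega\rangle^{s}f\|_{L^{p}_{x}}^{p}=\int_{0}^{\infty}\|\langle\Omega\rangle^{s}\bigl(f(r\cdot)\bigr)\|_{L^{p}(\mathbb{S}^{2})}^{p}\,r^{2}\,dr,
\]
so it suffices to prove, uniformly in $\omega$ and in $g\in C^{\infty}(\mathbb{S}^{2})$, the one-sphere bound
\[
\|b^{\omega}_{\eta}\,g\|_{L^{p}(\mathbb{S}^{2})}\lesssim\eta^{s}\,\|\langle\Omega\rangle^{s}g\|_{L^{p}(\mathbb{S}^{2})},\qquad 0\le s<\tfrac{2}{p}.
\]

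For this I would apply H\"older's inequality with the pair $(2/s,q)$, where $\tfrac1q:=\tfrac1p-\tfrac s2\in(0,\tfrac1p]$ (with the convention $2/s=\infty$, $q=p$ at $s=0$):
\[
\|b^{\omega}_{\eta}g\|_{L^{p}(\mathbb{S}^{2})}\le\|b^{\omega}_{\eta}\|_{L^{2/s}(\mathbb{S}^{2})}\,\|g\|_{L^{q}(\mathbb{S}^{2})}.
\]
Since $b^{\omega}_{\eta}$ is bounded and supported in a cap of area $\sim\eta^{2}$, the first factor is $\lesssim(\eta^{2})^{s/2}=\eta^{s}$. For the second factor I would invoke the Sobolev embedding $W^{s,p}(\mathbb{S}^{2})\hookrightarrow L^{q}(\mathbb{S}^{2})$ at the scaling-critical exponent $\tfrac1q=\tfrac1p-\tfrac s2$ — which is available exactly because $s<2/p$ forces $q<\infty$ — together with the elementary fact that $\|\langle\Omega\rangle^{s}g\|_{L^{p}(\mathbb{S}^{2})}\sim\|g\|_{W^{s,p}(\mathbb{S}^{2})}$ (the operators $\langle\Omega\rangle^{s}$ and $(1-\Delta_{\mathbb{S}^{2}})^{s/2}$ differ by a Mikhlin-type multiplier of $\Delta_{\mathbb{S}^{2}}$ that is bounded with bounded inverse on $L^{p}$). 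This gives $\|g\|_{L^{q}(\mathbb{S}^{2})}\lesssim\|\langle\Omega\rangle^{s}g\|_{L^{p}(\mathbb{S}^{2})}$, hence the claim, and the $\mathbb{R}^{3}$ statement follows by integrating in $r$. For the spherically symmetric refinement, $f(r\cdot)$ is constant on each sphere, so $\|b^{\omega}_{\eta}f(r\cdot)\|_{L^{p}(\mathbb{S}^{2})}=|f(r\cdot)|\,\|b^{\omega}_{\eta}\|_{L^{p}(\mathbb{S}^{2})}\le\eta^{2/p}\|f(r\cdot)\|_{L^{p}(\mathbb{S}^{2})}$ after accounting for $|\mathbb{S}^{2}|$; integrating in $r$ yields the sharp power $\eta^{2/p}$ with no constant loss.

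The step I expect to need the most care is the polar-coordinate reduction when $B^{\omega}_{\eta}$ is realized as an angular Fourier multiplier rather than a physical-space angular cutoff: then the displayed identities hold verbatim only at $L^{2}$ (via Plancherel and the commutation $\mathcal{F}\langle\Omega\rangle^{s}=\langle\Omega\rangle^{s}\mathcal{F}$), and for general $p$ I would instead run an angular Littlewood--Paley decomposition $f=\sum_{k}\mathcal{P}_{k}f$, bounding each shell by $\|B^{\omega}_{\eta}\mathcal{P}_{k}f\|_{L^{p}_{x}}\lesssim\min\bigl(1,\eta^{2/p}2^{2k/p}\bigr)\|\mathcal{P}_{k}f\|_{L^{p}_{x}}$ via Bernstein's inequality on $\mathbb{S}^{2}$ and H\"older against the cap, combining this with $\|\mathcal{P}_{k}f\|_{L^{p}_{x}}\lesssim 2^{-ks}\|\langle\Omega\rangle^{s}f\|_{L^{p}_{x}}$, and summing the geometric series: the regimes $2^{k}\le\eta^{-1}$ and $2^{k}>\eta^{-1}$ each contribute $\eta^{s}$ precisely because $0<s<2/p$. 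In either route, the restriction $s<2/p$ is not a technical artifact but the sharp threshold at which $W^{s,p}(\mathbb{S}^{2})$ ceases to embed into $L^{\infty}$, which is also why the optimal gain $\eta^{2/p}$ survives only in the radial case.
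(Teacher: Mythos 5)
Your overall strategy is sound and, in fact, close to the route the paper (following Sterbenz) has in mind: the paper's own ``proof'' is only the remark that (\ref{weightedangular}) follows by interpolating an $L^{\infty}\to L^{\infty}$ bound against an $L^{2}\to L^{2}$ bound together with Sobolev embedding on $\mathbb{S}^{2}$, and that the radial case reaches the endpoint $\eta^{2/p}$ precisely because the sphere Sobolev step is not needed. Your first route (polar coordinates, H\"older of the cap in $L^{2/s}(\mathbb{S}^{2})$, and $W^{s,p}(\mathbb{S}^{2})\hookrightarrow L^{q}(\mathbb{S}^{2})$ with $\tfrac1q=\tfrac1p-\tfrac s2$) is a correct and clean proof of exactly this statement \emph{for a physical-space angular cutoff}, and your treatment of the radial endpoint agrees with the paper's remark. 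The identification $\|\langle\Omega\rangle^{s}g\|_{L^{p}(\mathbb{S}^{2})}\sim\|g\|_{W^{s,p}(\mathbb{S}^{2})}$ is also fine, since $\langle\Omega\rangle^{s}$ is an invertible zero-order perturbation of $(1-\Delta_{\mathbb{S}^{2}})^{s/2}$ in the sense of spectral multipliers.

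The genuine soft spot is the one you flagged yourself but did not really close. In this paper $B^{\omega}_{\eta}$ is defined through the symbol $b^{\omega}_{\eta}$ with $\omega=\xi/|\xi|$, i.e.\ it is a Fourier multiplier localizing to a frequency sector, so the polar-coordinate factorization of $\|B^{\omega}_{\eta}f\|_{L^{p}_{x}}$ is simply unavailable, and everything hinges on your fallback blockwise bound $\|B^{\omega}_{\eta}\mathcal{P}_{k}f\|_{L^{p}_{x}}\lesssim\min\bigl(1,(\eta 2^{k})^{2/p}\bigr)\|\mathcal{P}_{k}f\|_{L^{p}_{x}}$. The justification you offer (``Bernstein on $\mathbb{S}^{2}$ and H\"older against the cap'') only makes sense when the cap and the Lebesgue norm sit on the same side of the Fourier transform, i.e.\ at $p=2$ via Plancherel; for $p\neq 2$ you cannot H\"older a frequency-side cap against a physical-space $L^{p}$ norm, and the endpoint exponent $2/p$ per block would in addition require an $L^{\infty}$ bound for $B^{\omega}_{\eta}\mathcal{P}_{k}$, which is not free because the pure sector multiplier does not have an integrable kernel (note that the paper's kernel lemma only asserts uniform $L^{1}$ kernels for $B^{\omega}_{\eta}P_{\lambda}$, i.e.\ after radial frequency localization). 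The repair is exactly the paper's stated scheme: prove the gain $\min(1,\eta 2^{k})$ at $L^{2}$ by Plancherel plus cap-H\"older and sphere Bernstein on the Fourier side, prove uniform $L^{q}\to L^{q}$ boundedness with no gain for some large finite $q$ (uniform in $\eta$ by an anisotropic rescaling of the sector, or via the frequency-localized kernel bounds), and interpolate before summing over $k$; the strict inequality $s<2/p$ is then what absorbs both the sub-endpoint interpolation exponent and the geometric summation, so you never need the blockwise endpoint at all. With that substitution your argument becomes a complete proof and coincides in substance with the paper's intended one.
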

\begin{remark}
The angular concentration estimate is derived by interpolation between $
L^{\infty} \rightarrow L^{\infty}$ bound and $L^{2} \rightarrow L^{2} $ bound. When $f$ is radial, we don't need to use
Sobolev embedding lemma on the sphere which is used in the proof of (\ref{weightedangular}), hence we could reach the endpoint. Notice the fact that in Sobolev embedding estimate \ref{Sobolev}, we don't require $f$ to be localized in an angular sector. Since $B^{\omega}_{\eta}B^{\omega}_{\eta} f \sim B^{\omega}_{\eta} f$, we can
apply Sobolev embedding estimate first and then apply angular concentration estimate to utilize most information about the angular localization. 
\end{remark}

\section{Bilinear Decomposition and Angular Decomposition}\label{Bilinear}

In this section, we will decompose all types of bilinear forms and then introduce the angular decompositions lemmas  for those terms when both inputs and outputs are very close to the cone. In later proof of main theorem, we will see the application of  the all types of decomposition we did in this section. The content of discussion in this section would be 
very similar to the section 6 of \cite{Sterbenz2}, as in the massless case, the characteristic hypersurfaces are still light cone. 

Before introducing all types of decomposition, we emphasize that there are two facts better be remembered: $(i):$ the characteristic hypersurface of $\psi_{+}$ is the lower cone, of $\psi_{-}$ is the upper cone and of $\phi$ is the entire light cone. $(ii):$ If two input frequencies are $(\tau_{1}, \xi_{1})$ and $(\tau_{2},\xi_{2})$, then the output frequency associated with $Q_{a,b}$ is $(\tau_{1}-\tau_{2}, \xi_{1}-\xi_{2})$ due to complex conjugation, the output frequency associated with $\tilde{Q}_{a,b}$ is $(\tau_{1} + \tau_{2}, \xi_{1}+\xi_{2})$, see (\ref{equation20001}) and (\ref{equation3000}).

\par Let's first assume that $\mu \leq c \lambda$ for a sufficiently small constant $c$, $c\ll 1$ and consider the High $\times$ High interaction case, generally we have the following type:
\begin{displaymath}
 S_{\mu} T_{a, b} (S_{\lambda} u, S_{\lambda} v), \quad T_{a, b}\in \{Q_{a, b}, \tilde{Q}_{a,b}\},  a,b\in \{+, -\}.\end{displaymath} 
After fixing constants $c_{1}, c_{2}$ and $c_{3}$ s.t $12 \leq c_{3}+8\leq c_{2}+4\leq c_{1}$, we can decompose above High $\times$ High interaction into the 
following forms by utilizing the fact that as the output is localized around $\mu$, then the two input frequencies can't too far from each other. When $a\cdot b > 0$, then we have the following decomposition:
\begin{equation}\label{equation30005}
S_{\mu} Q_{a, b} (S_{\lambda} u, S_{\lambda} v) = A_{1}\,+\,B_{1}\,=\,S_{\mu} Q_{a,b} (S_{\lambda, \bullet \geq c_{2}\mu}^{a} u, S^{b}_{\lambda, \bullet \geq c_{3} \mu} v)
+ S_{\mu} Q_{a, b} (S_{\lambda, \bullet < c_{2} \mu}^{a} u, S_{\lambda, \bullet < c_{1} \mu}^{b} v),
 \end{equation}
 when $a\cdot b < 0$, we have
 \begin{equation}\label{equation71004}
S_{\mu} Q_{a, b} (S_{\lambda} u, S_{\lambda} v) = S_{\mu} Q_{a,b} (S_{\lambda, \bullet \geq c \lambda}^{a} u, S^{b}_{\lambda} v)
+ S_{\mu} Q_{a, b} (S_{\lambda, \bullet \leq c\lambda}^{a} u, S_{\lambda, \bullet \geq c \lambda}^{b} v).
 \end{equation}
While for all possible sign of $a$ and $b$, we have following decomposition for $\tilde{Q}_{a,b}$:
\[
S_{\mu} \tilde{Q}_{a, b} (S_{\lambda} u, S_{\lambda} v) = \,S_{\mu} \tilde{Q}_{a,b} (S_{\lambda, \bullet \geq c_{2}\mu}^{-b} u, S^{b}_{\lambda, \bullet \geq c_{3} \mu} v) + S_{\mu} \tilde{Q}_{a, b} (S_{\lambda, \bullet < c_{2} \mu}^{-b} u, S_{\lambda, \bullet < c_{1} \mu}^{b} v)
\]

\[
= S_{\mu} \tilde{Q}_{a,b} (S_{\lambda, \bullet \geq c_{2}\mu} u, S^{b}_{\lambda, \bullet \geq c_{3} \mu} v) + S_{\mu} \tilde{Q}_{a, b} (S_{\lambda, \bullet < c_{2} \mu}^{-b} u, S_{\lambda, \bullet < c_{1} \mu}^{b} v) + S_{\mu} \tilde{Q}_{a,b} (S_{\lambda, \bullet < c_{2}\mu}^{b} u, S^{b}_{\lambda, \bullet \geq c_{3} \mu} v)
\]
\[
= A_{2}+ B_{2}+ C_{2}:= S_{\mu} \tilde{Q}_{a,b} (S_{\lambda, \bullet \geq c_{2}\mu} u, S^{b}_{\lambda, \bullet \geq c_{3} \mu} v)
\]
\begin{equation}\label{equation71000}
 + S_{\mu} \tilde{Q}_{a, b} (S_{\lambda, \bullet < c_{2} \mu}^{-b} u, S_{\lambda, \bullet < c_{1} \mu}^{b} v) + S_{\mu} \tilde{Q}_{a,b} (S_{\lambda, \bullet < c_{2}\mu}^{b} u, S^{b}_{\lambda, \bullet \geq c \lambda} v).
\end{equation}

\par Terms $A_{1}$, $A_{2}$ and $C_{2}$  are relatively easier to deal with when compares with the terms $B_{i}$, $i \in\{1,2\}$. While in order to see the angular structure
inside the term $B_{i}$, we have to do further decomposition:
\[
B_{1} = \scriptsize \sum_{\begin{array}{c}
                                                                                                  d, \delta_1, \delta_2\\
\delta_1 < c_{2} \mu, \delta_2 < c_{1}\, \mu\\
                                                                                               \end{array}} \normalsize
{S}_{\mu, d} Q_{a, b} (S_{\lambda, \delta_1}^a u , S_{\lambda, \delta_2}^b v)
 = B_{\textrm{I}}^{1} + B_{\textrm{II}}^{1} + B_{\textrm{III}}^{1}= \sum_{d\leq \mu} {S}_{\mu, d} Q_{a, b} (S_{\lambda, \bullet < d}^{a} u  ,S_{\lambda, \bullet < d}^{b} v) \]
\begin{equation}\label{equation1008}
 + \sum_{ d < c_{2} \mu} {S}_{\mu, \bullet \leq \min\{d, \mu\}} Q_{a, b}( S_{\lambda, d}^{a} u, S_{\lambda, \bullet < d}^{b} v) + 
\sum_{ d < c_{1} \mu} {S}_{\mu, \bullet \leq \min\{d, \mu\}} Q_{a, b} (S_{\lambda, \bullet \leq \min\{d, c_{2}\mu\}}^{a} u, S_{\lambda, d}^{b} v).
\end{equation}

\[
B_{2} = \scriptsize \sum_{\begin{array}{c}
                                                                                                  d, \delta_1, \delta_2\\
\delta_1 < c_{2} \mu, \delta_2 < c_{1}\, \mu\\
                                                                                               \end{array}} \normalsize
{S}_{\mu, d}^{a} \tilde{Q}_{a, b} (S_{\lambda, \delta_1}^{-b} u , S_{\lambda, \delta_2}^b v)
 = B_{\textrm{I}}^{2} + B_{\textrm{II}}^{2} + B_{\textrm{III}}^{2}=  \sum_{d\leq \mu} {S}_{\mu, d}^{a} \tilde{Q}_{a, b} (S_{\lambda, \bullet < d}^{-b} u  ,S_{\lambda, \bullet < d}^{b} v)
\]
\begin{equation}\label{equation1004}
 + \sum_{ d < c_{2} \mu} {S}_{\mu, \bullet \leq \min\{d, \mu\}}^{a} \tilde{Q}_{a, b}( S_{\lambda, d}^{-b} u,S_{\lambda, \bullet < d}^{b} v) + \sum_{d < c_{1} \mu} {S}_{\mu, \bullet \leq \min\{d, \mu\}}^{a} \tilde{Q}_{a, b} (S_{\lambda, \bullet \leq \min\{d, c_{2}\mu\}}^{-b} u, S_{\lambda, d}^{b} v).
\end{equation}

Following the similar argument of \cite{Sterbenz2}[section 6], we could derive the following High $\times$ High angular decomposition lemma for the terms $B_{\textrm{I}}^{i}$, $B_{\textrm{II}}^{i}$ and $B_{\textrm{III}}^{i}$, $i\in\{1,2\}$.
\begin{lemma}[High $\times$ High angular decomposition]\label{highhighangular}
For the bilinear form expressions:
\begin{displaymath}
S_{\mu, d} \, Q_{a, b} (S_{\lambda,\,\bullet < d }^{a} u\,,\, S_{\lambda, \,\bullet <d}^{b} v),\quad S_{\mu, d}^{a} \, \tilde{Q}_{a, b} (S_{\lambda,\,\bullet < d}^{-b} u\,,\, S_{\lambda, \,\bullet < d}^{b} v),
\end{displaymath}
we could have the following angular decomposition
\begin{displaymath}
S_{\mu, d}^{\pm} \, Q_{a,b} (S_{\lambda,\,\bullet <d}^{a} u\,,\, S_{\lambda, \,\bullet <
d}^{b} v) \,\,=\end{displaymath}
\begin{displaymath}
\sum_{\tiny \begin{array}{c}
\omega_{1}, \omega_{2},\omega_{3}\\
|\omega_{1} \mp a \cdot \omega_{2}| \sim (\frac{d}{\mu})^{1/2}\\
|\omega_{2}- a b \cdot \omega_{3}| \sim (\frac{d}{\mu})^{1/2}\\
\end{array}
} S^{\omega_{1}, \pm}_{\mu, d} Q_{a, b} (B^{\omega_{2}}_{(\frac{d}{\mu})^{1/2}} S_{\lambda,\,\bullet <  d}^{a} u\,,\,  B^{\omega_{3}}_{(\frac{d}{\mu})^{1/2}}\, \, S_{\lambda, \,\bullet < d}^{b} v)
\end{displaymath}

\begin{displaymath}
S_{\mu, d}^{a}  \tilde{Q}_{a, b} (S_{\lambda,\,\bullet < d}^{-b} u\,,\, S_{\lambda, \,\bullet <  d}^{b} v) \,\,=\end{displaymath}
\begin{displaymath}
 \sum_{\tiny \begin{array}{c}
\omega_{1}, \omega_{2},\omega_{3}\\
|\omega_{1} - a b\cdot \omega_{3}| \sim (\frac{d}{\mu})^{1/2}\\
|\omega_{2}+  \omega_{3}| \sim (\frac{d}{\mu})^{1/2}\\
\end{array}
}\normalsize S^{\omega_{1}, a}_{\mu, d} \tilde{Q}_{a, b}  (B^{\omega_{2}}_{(\frac{d}{\mu})^{1/2}} S_{\lambda,\,\bullet < d}^{-b} u\,,\,  B^{\omega_{3}}_{(\frac{d}{\mu})^{1/2}}\, \, S_{\lambda, \,\bullet < d}^{b} v).
\end{displaymath}
Similar angular decompositions hold for the bilinear forms $B_{\textrm{II}}^{i}$ and $B_{\textrm{III}}^{i}$, $i\in\{1,2\}$.
\end{lemma}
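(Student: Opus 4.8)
The plan is to perform an explicit angular decomposition of the spatial frequency supports of each input and of the output, then verify that the only surviving triples of angular sectors are those satisfying the stated proximity relations. The mechanism behind the lemma is purely geometric: we are looking at a $\mathbf{High}\times\mathbf{High}$ interaction whose output has been localized to modulation $d$ and spatial frequency $\sim\mu$, while each Dirac (or meson) input has been localized to a branch $\pm$ of the cone with modulation less than $d$. The key quantitative fact is that a space-time frequency with $|\tau|-|\xi|$ of size $\lesssim d$ and $|\xi|\sim\lambda$ lies within angular distance $O((d/\lambda)^{1/2})$ of the null cone — more precisely, its spatial direction $\omega$ is essentially the same as the spatial direction of its null-cone projection. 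Applied to the inputs $S_{\lambda,\bullet<d}^{a}u$, $S_{\lambda,\bullet<d}^{b}v$ this gives them angular thickness $O((d/\lambda)^{1/2})$ around some directions $\omega_2,\omega_3$; applied to the output, combined with $|(\tau,\xi)|\sim\mu$, it gives the output angular thickness $O((d/\mu)^{1/2})$ around $\omega_1$. Since $(d/\lambda)^{1/2}\ll(d/\mu)^{1/2}$ when $\mu\ll\lambda$, the coarser scale $(d/\mu)^{1/2}$ dominates, which is why all three sectors in the decomposition are taken at size $(d/\mu)^{1/2}$.

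First I would insert a partition of unity on the sphere at angular scale $(d/\mu)^{1/2}$ into each of the two input slots, writing $S_{\lambda,\bullet<d}^{a}u=\sum_{\omega_2}B^{\omega_2}_{(d/\mu)^{1/2}}S_{\lambda,\bullet<d}^{a}u$ and likewise for $v$ with sectors $\omega_3$; the output localization $S_{\mu,d}^{\pm}$ already carries (through the operator $S^{\omega_1,\pm}_{\mu,d}=B^{\omega_1}_{(d/\mu)^{1/2}}P_\mu S_{\mu,d}^{\pm}$ defined in Section \ref{Functionspace}) an angular localization at the matching scale, so I insert $\sum_{\omega_1}$ there as well. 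Next I would show that the summand corresponding to a triple $(\omega_1,\omega_2,\omega_3)$ vanishes unless the two stated relations hold. For the $Q_{a,b}$ term: the output spatial frequency is $\xi=\xi_1-\xi_2$ with $|\xi_1|\sim|\xi_2|\sim\lambda$ and $|\xi|\sim\mu\ll\lambda$, forcing $\xi_1$ and $ab\cdot\xi_2$ to be nearly parallel up to angle $O(\mu/\lambda)\lesssim(d/\mu)^{1/2}$, hence $|\omega_2-ab\cdot\omega_3|\lesssim(d/\mu)^{1/2}$; the complementary bound $|\omega_2-ab\cdot\omega_3|\gtrsim(d/\mu)^{1/2}$ for the surviving terms comes from the modulation constraint: writing the output modulation $d\sim||\tau_1-\tau_2|-|\xi_1-\xi_2||$ in terms of the two input modulations ($\lesssim d$ each by hypothesis) and the angle, one sees $d\lesssim\lambda\theta^2+(\text{input modulations})$, and combined with the reverse bookkeeping this pins $\theta=\angle(\xi_1,\pm\xi_2)$ to size exactly $(d/\mu)^{1/2}$ — here $\pm$ is dictated by the sign $ab$. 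The relation $|\omega_1\mp a\cdot\omega_2|\sim(d/\mu)^{1/2}$ tying the output direction to $\omega_2$ follows from the fact that $\xi=\xi_1-\xi_2\approx(1-\lambda/|\xi_2|\text{-type factor})\xi_1$ is nearly (anti)parallel to $\xi_1$, together with the output's own modulation/size constraint $d\sim\mu\cdot|\omega_1\mp\hat\xi_1|^2$ coming from its distance to the $\pm$ cone. The $\tilde{Q}_{a,b}$ case is identical in spirit, with output frequency $\xi_1+\xi_2$ and the sign pattern shifted because the symbol is $\Pi_{-b}$ rather than $\Pi_{a}$ on one input, which is exactly what produces $|\omega_2+\omega_3|\sim(d/\mu)^{1/2}$ and $|\omega_1-ab\cdot\omega_3|\sim(d/\mu)^{1/2}$.

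I expect the main obstacle to be \emph{bookkeeping the directional signs correctly} across the four subcases $a\cdot b>0$ versus $a\cdot b<0$ and $Q$ versus $\tilde Q$: the geometry is robust, but which of $\xi_1,\xi_2$ is ``flipped'' and whether the relevant cone-projection of the output is to the upper or lower sheet depends on the pair $(a,b)$ and on the $\pm$ in $S_{\mu,d}^{\pm}$, and getting the $\mp a$, $ab$, and $+$ in the three relations to match the statement requires care with the conventions $\Pi_\pm(-\xi)=\Pi_\mp(\xi)$ and the conjugation rule (ii) at the start of Section \ref{Bilinear}. A secondary technical point is verifying that the finitely-overlapping (rather than disjoint) nature of the sector partition does not inflate the sum — this is standard and follows because at fixed output sector $\omega_1$ the constraints force $\omega_2$ (and then $\omega_3$) into $O(1)$ many sectors, so the triple sum is effectively a single sum; I would cite the analogous computation in \cite{Sterbenz2}[Section 6] for this. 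Finally, the assertion that the same decomposition holds for $B^{i}_{\mathrm{II}}$ and $B^{i}_{\mathrm{III}}$ follows verbatim, since in those terms one input has modulation exactly $d$ and the other modulation $<d$, which only improves (never weakens) the angular localization of that input, so it still fits inside a sector of size $(d/\mu)^{1/2}$.
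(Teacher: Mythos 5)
Your overall strategy coincides with the paper's: the paper gives no independent argument for this lemma, it simply invokes the Section~6 machinery of \cite{Sterbenz2}, which is exactly what you propose (insert angular partitions of unity at scale $(d/\mu)^{1/2}$ in both inputs, observe that $S^{\omega_1,\pm}_{\mu,d}$ already carries the matching output sector, and kill all non--adjacent triples by a support computation; your remarks on finite overlap and on $B^{i}_{\mathrm{II}},B^{i}_{\mathrm{III}}$ are fine). The problem is the quantitative core of that support computation. You justify $|\omega_2-ab\cdot\omega_3|\lesssim(d/\mu)^{1/2}$ from the spatial size constraint alone via ``angle $O(\mu/\lambda)\lesssim(d/\mu)^{1/2}$'', but $\mu/\lambda\lesssim(d/\mu)^{1/2}$ is equivalent to $d\gtrsim\mu^{3}/\lambda^{2}$, which fails for small modulations; the lemma is used for every dyadic $d\leq\mu$ (see the sums in (\ref{equation1008}) and in $\mathcal{I}_{1}$ of Lemma \ref{highhighnear}), so this step genuinely breaks down there. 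The correct source of the bound is the coupling of the three \emph{temporal} constraints: with $\tau_i=\mp|\xi_i|+O(d)$ for the inputs and $\bigl||\tau|-|\xi|\bigr|\sim d$, $|\xi|\sim\mu$ for the output, the identity $|\xi_1-\xi_2|^{2}-(|\xi_1|-|\xi_2|)^{2}=2|\xi_1||\xi_2|(1-\cos\theta)$ gives $\lambda^{2}\theta^{2}\lesssim d\mu$, i.e. the input--input angle is $\lesssim(d\mu)^{1/2}/\lambda\leq(d/\mu)^{1/2}$. Your own appeal to the modulation constraint is made with the wrong resonance identity (``$d\lesssim\lambda\theta^{2}+$ input modulations''; for this configuration the discrepancy is $\sim\lambda^{2}\theta^{2}/\mu$, not $\lambda\theta^{2}$) and is used in the wrong direction: the claim that $\theta$ is pinned two--sidedly to size $(d/\mu)^{1/2}$ is false, since the input--input angle is $\lesssim(d\mu)^{1/2}/\lambda\ll(d/\mu)^{1/2}$ and can even vanish when the input modulations are comparable to $d$. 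The ``$\sim$'' in the summation conditions means sector adjacency at scale $(d/\mu)^{1/2}$; only upper bounds on the angles are needed (and only upper bounds are what the later Cauchy--Schwarz arguments use to make the triple sum essentially diagonal), so no lower bound should be sought.

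Two further points. Your opening ``key quantitative fact'' --- that an input of frequency $\sim\lambda$ and modulation $<d$ has angular thickness $O((d/\lambda)^{1/2})$ --- is not meaningful as stated: such a function has no intrinsic angular localization, and all angular information comes from coupling the modulation constraints of the two inputs \emph{and} the output through the convolution. That coupling is also what yields the output relation: from $\tau=\tau_1-\tau_2$ one gets $|\xi_1|-|\xi_2|=\pm|\xi|+O(d)$ with a sign dictated by the output branch and $(a,b)$, and expanding $|\xi_1|=|\xi_2+\xi|$ while keeping the signed quadratic term gives $\mu\,\angle(\omega_1,\mp a\,\omega_2)^{2}\lesssim d$; done loosely this expansion produces a spurious $(\mu/\lambda)^{1/2}$ error term, so the sign of the quadratic term matters. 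Finally, the factor $ab$ in $|\omega_2-ab\cdot\omega_3|$ does not come from the size constraint (which knows nothing about $a,b$) but from the branch constraints; for $Q_{a,b}$ with $a\cdot b<0$ and $\mu\ll\lambda$ the configuration is simply empty, cf. (\ref{equation71004}). With the corrected resonance computation substituted for your second paragraph, the proof goes through and is the intended one.
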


Now let's proceed to  consider the Low $\times$ High interaction case  under the assumption that $\mu \leq c \lambda$, $c\ll 1$.  Generally we  have the following type of interaction:
\begin{displaymath}
 S_{\lambda} T_{a, b} (S_{\mu} u, S_{\lambda} v),  \quad T_{a, b}\in \{Q_{a, b}, \tilde{Q}_{a,b}\},  a,b\in \{+, -\}.
\end{displaymath}

Very similar to  the High $\times$ High interaction case, we can decompose above general bilinear form into the following forms by utilizing the fact that the output frequency and the high input frequency can't be too far from each other.  Hence
\[
S_{\lambda} Q_{a, b} (S_{\mu} u, S_{\lambda} v) = S_{\lambda, \bullet \geq c_{3}\mu}^{-b} Q_{a,b} (S_{\mu} u, S_{\lambda, \bullet \geq c_{2} \mu}^{b} v)
+\, S_{\lambda, \bullet \leq c_{1} \mu}^{-b} Q_{a, b}( S_{\mu} u, S_{\lambda, \bullet < c_{2} \mu}^{b} v)
\]
\[
= S_{\lambda, \bullet \geq c_{3}\mu} Q_{a,b} (S_{\mu} u, S_{\lambda, \bullet \geq c_{2} \mu}^{b} v) + S_{\lambda, \bullet \leq c_{3}\mu}^{b} Q_{a,b} (S_{\mu} u, S_{\lambda, \bullet \geq c_{2} \mu}^{b} v)
+\, S_{\lambda, \bullet \leq c_{1} \mu}^{-b} Q_{a, b}( S_{\mu} u, S_{\lambda, \bullet < c_{2} \mu}^{b} v)
\]
\[
= \tilde{A}_{1} + \tilde{B}_{1} + \tilde{C}_{1} := S_{\lambda, \bullet \geq c_{3}\mu} Q_{a,b} (S_{\mu} u, S_{\lambda, \bullet \geq c_{2} \mu}^{b} v) +
 \]
\begin{equation}\label{equation60000}
  S_{\lambda, \bullet < c_{1} \mu}^{-b} Q_{a, b}( S_{\mu} u, S_{\lambda, \bullet < c_{2} \mu}^{b} v) +
S_{\lambda, \bullet \leq c_{3}\mu}^{b} Q_{a,b} (S_{\mu} u, S_{\lambda, \bullet \geq c \lambda}^{b} v).
\end{equation}
While for $\tilde{Q}_{a,b}$, sign matters here, when $a\cdot b < 0$ we have
\begin{equation}\label{equation30001}
S_{\lambda} \tilde{Q}_{a, b} (S_{\mu} u, S_{\lambda} v)  =\, \, S_{\lambda} \tilde{Q}_{a,b} (S_{\mu} u, S_{\lambda, \bullet \geq c \lambda}^{b} v)+ S_{\lambda, \bullet \geq c \lambda}^{a} \tilde{Q}_{a, b}( S_{\mu} u, S_{\lambda, \bullet < c  \lambda}^{b} v),
\end{equation}
and when $a\cdot b > 0$, we have
\begin{equation}\label{equation71001}
S_{\lambda} \tilde{Q}_{a, b} (S_{\mu} u, S_{\lambda} v) = \tilde{A}_{2} + \tilde{B}_{2} := S_{\lambda, \bullet \geq c_{3}\mu}^{a} \tilde{Q}_{a,b}(S_{\mu} u, S_{\lambda, \bullet \geq c_{2}\mu}^{b} v ) + S_{\lambda, \bullet < c_{1}\mu}^{a} \tilde{Q}_{a, b}(S_{\mu} u, S_{\lambda, \bullet < c_{2}\mu}^{b} v).
\end{equation}
For the High $\times$Low interaction case, due to symmetry between inputs of $Q_{a,b}$, we can switch role of $u$ and $v$ inside $Q_{a,b}$ and have a similar decomposition like (\ref{equation60000}). Hence we only have to consider the High $\times$ Low interaction for  $\tilde{Q}_{a,b}$.
\[
S_{\lambda} \tilde{Q}_{a,b} (S_{\lambda} u, S_{\mu} v) = S_{\lambda, \bullet \geq c_{3}\mu}^{a} \tilde{Q}_{a,b} (S^{a}_{\lambda, \bullet \geq c_{2}\mu} u, S_{\mu} v) + S_{\lambda, \bullet  < c_{1}\mu}^{a} \tilde{Q}_{a,b}(S^{a}_{\lambda, \bullet < c_{2}\mu} u,  S_{\mu} v)
\]
\begin{equation}\label{equation71002}
= S_{\lambda, \bullet \geq c_{3}\mu}^{a} \tilde{Q}_{a,b} (S_{\lambda, \bullet \geq c_{2}\mu} u, S_{\mu} v) +  S_{\lambda, \bullet \geq c \lambda }^{a} \tilde{Q}_{a,b} (S_{\lambda, \bullet < c_{2}\mu}^{-a} u, S_{\mu} v)
+
S_{\lambda, \bullet < c_{1}\mu}^{a} \tilde{Q}_{a,b}(S^{a}_{\lambda, \bullet  < c_{2}\mu} u,  S_{\mu} v).
\end{equation}

We  can do further decomposition for terms $\tilde{B}_{1} $and $\tilde{B}_{2}$ as following:
\[
\tilde{B}_{1} \,= \scriptsize \sum_{\begin{array}{c}
d,\delta_1,\delta_2\\
d < c_{1} \mu, \delta_2 < c_{2}\mu\\

                                                                                                           \end{array}
}\normalsize S_{\lambda, d}^{-b}  Q_{a, b} (S_{\mu, \delta_1}^{a} u, S_{\lambda, \delta_2}^{b} v )  = \widetilde{B_{\textrm{I}}^{1}} + \widetilde{B_{\textrm{II}}^{1}}+\widetilde{B_{\textrm{III}}^{1}}=\sum_{d\leq \mu} S_{\lambda, \bullet < d}^{-b}  Q_{a, b} (S_{\mu, d}^{a} u\,,\,S_{\lambda, \bullet < d}^{b} v)
\]
\begin{equation}\label{equation1105}
+\sum_{d < c_{1} \mu} S_{\lambda, d}^{-b}  Q_{a,b} ( S_{\mu, \bullet \leq \min\{d,\mu\}}^{a} u, S_{\lambda, \bullet \leq \min\{d,c_{2}\mu\}}^b v) + \sum_{d < c_{2} \mu} S_{\lambda, \bullet < d}^{-b}  Q_{a, b} (S_{\mu, \bullet \leq \min\{d,\mu\}}^{a} u , S_{\lambda, d}^{b} v).
\end{equation}

\[
\tilde{B}_{2} = \scriptsize \sum_{\begin{array}{c}
d,\delta_1,\delta_2\\
d < c_{1} \mu, \delta_2 < c_{2}\mu\\

                                                                                                           \end{array}
}\normalsize S_{\lambda, d}^{a} \tilde{Q}_{a, b} (S_{\mu, \delta_1} u, S_{\lambda, \delta_2}^{b} v ) 
 = \widetilde{B_{\textrm{I}}^{2}} + \widetilde{B_{\textrm{II}}^{2}}+\widetilde{B_{\textrm{III}}^{2}}= \sum_{d\leq \mu} S_{\lambda, \bullet < d}^{a} \tilde{Q}_{a, b} (S_{\mu, d} u\,,\,S_{\lambda, \bullet < d}^{b} v)\]
\begin{equation}\label{equation1105}
+\sum_{d < c_{1} \mu} S_{\lambda, d}^{a} \tilde{Q}_{a,b} ( S_{\mu, \bullet \leq \min\{d,\mu\}} u, S_{\lambda, \bullet \leq \min\{d,c_{2}\mu\}}^b v)\,+\sum_{d < c_{2} \mu} S_{\lambda, \bullet < d}^{a} \tilde{Q}_{a, b} (S_{\mu, \bullet \leq \min\{d,\mu\}} u ,S_{\lambda, d}^{b} v).
\end{equation}

Follow a similar argument in \cite{Sterbenz1,Sterbenz2} , we have the following wide angle decomposition lemma for terms $\widetilde{B_{\textrm{I}}^{i}}$
,$\widetilde{B_{\textrm{II}}^{i}}$,$\widetilde{B_{\textrm{III}}^{i}}$, $i \in\{1,2\}$:
\begin{lemma}[Low $\times$ High wide angle decomposition ]\label{lowhigh}
For the bilinear forms
\[
S_{\lambda, \bullet < d}^{-b}  Q_{a, b} (S_{\mu, d}^{a} u\,,\,S_{\lambda, \bullet < d}^{b} v), \,\, S_{\lambda, \bullet < d}^{a} \tilde{Q}_{a, b} (S_{\mu, d} u, S_{\lambda, \bullet < d}^{b} v),
\]
we have following angular decompositions
\[
S_{\lambda, \bullet < d}^{-b} Q_{a, b} (S_{\mu, d}^{a} u\,,\,S_{\lambda, \bullet < d}^{b} v)\]
\[
= \sum_{\tiny \begin{array}{c}
\omega_{1}, \omega_{2},\omega_{3}\\
|\omega_{1} + \omega_{3}| \sim (\frac{d}{\mu})^{1/2}\\
|\omega_{2} - ab \cdot \omega_{3}| \sim (\frac{d}{\mu})^{1/2}\\
\end{array}
}\normalsize B^{\omega_{1}}_{(\frac{d}{\mu})^{1/2}} S^{-b}_{\lambda, \bullet < d} Q_{a, b} (S^{\omega_{2}, a}_{\mu, d} u, B^{\omega_{3}}_{(\frac{d}{\mu})^{1/2}} S^{b}_{\lambda, \bullet < d} v),
\]

\[
S_{\lambda, \bullet < d}^{a} \tilde{Q}_{a, b} (S_{\mu, d}^{c} u\,,\,S_{\lambda, \bullet < d}^{b} v)=\]
\[
 \sum_{\tiny \begin{array}{c}
\omega_{1}, \omega_{2},\omega_{3}\\
|\omega_{1} -  ab\cdot \omega_{3}| \sim (\frac{d}{\mu})^{1/2}\\
|\omega_{2} - bc \cdot \omega_{3}| \sim (\frac{d}{\mu})^{1/2}\\
\end{array}
}\normalsize B^{\omega_{1}}_{(\frac{d}{\mu})^{1/2}} S^{a}_{\lambda, \bullet < d} \tilde{Q}_{a, b} (S^{\omega_{2},c}_{\mu, d} u, B^{\omega_{3}}_{(\frac{d}{\mu})^{1/2}} S^{b}_{\lambda, \bullet < d} v),\quad c\in\{+,-\}.
\]
For the bilinear forms $\widetilde{B_{II}^{i}}$ and $\widetilde{B_{III}^{i}}$, $i\in\{1,2\}$, we have very similar decomposition formulas.
\end{lemma}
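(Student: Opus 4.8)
Since the asserted identities merely rearrange Fourier supports, the plan is to obtain each decomposition by inserting angular partitions of unity at the single scale $\eta=(d/\mu)^{1/2}$ on all three relevant frequency variables --- the low input frequency $\xi_{1}$ with $|\xi_{1}|\sim\mu$, the high input frequency $\xi_{2}$ with $|\xi_{2}|\sim\lambda$, and the output frequency --- and then to keep only those triples of sectors $(\omega_{1},\omega_{2},\omega_{3})$ whose joint Fourier support is nonempty. Each such partition has bounded overlap, so the insertion is harmless and the resulting sum is comparable to the bilinear form one began with (here $\mu\leq c\lambda$ guarantees that $\eta$ is never finer than the sharp angular scale attached to the modulation of any of the three factors). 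The content of the lemma is the two geometric constraints that survive, extracted as follows.

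\emph{The null relation ($\omega_{2}$ versus $\omega_{3}$).} On the Fourier support of the piece in question the low factor lies within distance $\sim d$ of its characteristic cone, the high factor within distance $<d$ of its cone, and the output within distance $<d$ of its cone. Writing $\tau_{1}=\varepsilon_{1}|\xi_{1}|+O(d)$ and $\tau_{2}=\varepsilon_{2}|\xi_{2}|+O(d)$ with the signs $\varepsilon_{1},\varepsilon_{2}$ dictated by $a,b,c$, I would substitute these into the modulation bound for the output frequency $\xi_{1}\mp\xi_{2}$ --- the sign being ``$-$'' for the $Q_{a,b}$ forms and ``$+$'' for the $\tilde{Q}_{a,b}$ forms, cf. (\ref{equation20001}) and (\ref{equation3000}) --- and expand $|\xi_{1}\mp\xi_{2}|=|\xi_{2}|\mp\hat{\xi}_{2}\cdot\xi_{1}+O(\mu^{2}\lambda^{-1})$. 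The $\lambda$-size terms then cancel and, dividing through by $|\xi_{1}|\sim\mu$, one is left with a relation of the form $\cos\angle(\xi_{1},\xi_{2})=\pm1+O(d/\mu)$; the sign is $ab$ for the $Q$ forms and $ac$ for the $\tilde{Q}$ forms, and in the latter case the operative regime $a\cdot b>0$ from (\ref{equation71001}) additionally forces $a=b$, so that $ac=bc$. One checks en route that the quadratic part of the expansion is only an $O(\mu/\lambda)$-fraction of the term it competes with and is therefore negligible. Hence $\angle(\hat{\xi}_{1},\,ab\,\hat{\xi}_{2})\lesssim (d/\mu)^{1/2}$ (resp. $\angle(\hat{\xi}_{1},\,bc\,\hat{\xi}_{2})\lesssim(d/\mu)^{1/2}$), which is precisely the relation $|\omega_{2}-ab\,\omega_{3}|\sim(d/\mu)^{1/2}$ (resp. $|\omega_{2}-bc\,\omega_{3}|\sim(d/\mu)^{1/2}$) in the statement.

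\emph{The output direction ($\omega_{1}$ versus $\omega_{3}$).} Because $|\xi_{1}|\sim\mu\ll\lambda\sim|\xi_{2}|$, the output frequency $\xi_{1}\mp\xi_{2}$ deviates from $\mp\hat{\xi}_{2}$ by an angle which, by the law of sines in the triangle with vertices $0,\xi_{1},\xi_{2}$, is $\lesssim (\mu/\lambda)\angle(\xi_{1},\xi_{2})\lesssim (\mu/\lambda)(d/\mu)^{1/2}\leq (d/\mu)^{1/2}$ once the bound of the previous step is in force. This pins $\omega_{1}$ to $\mp\omega_{3}$ up to a single sector and, once the cone superscript on the output is taken into account, yields $|\omega_{1}+\omega_{3}|\sim(d/\mu)^{1/2}$ for the $Q$ forms and $|\omega_{1}-ab\,\omega_{3}|\sim(d/\mu)^{1/2}$ for the $\tilde{Q}$ forms, as claimed. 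The companion statements for $\widetilde{B_{\textrm{II}}^{i}}$ and $\widetilde{B_{\textrm{III}}^{i}}$, $i\in\{1,2\}$, follow by exactly the same two steps; the only change is which of the two input modulations equals $\sim d$ and which is merely $\leq d$, but in every case the largest of the three modulations is $\sim d$ and sets the common angular scale, just as in \cite{Sterbenz1,Sterbenz2}.

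The one real obstacle I anticipate is the bookkeeping: running the near-cone computation of the first step through all the sign patterns $(a,b,c)\in\{+,-\}^{3}$ that actually occur in (\ref{equation60000}), (\ref{equation71001}) and (\ref{equation71002}), keeping careful track for each of the three factors of which branch of the light cone it is supported near --- this decides, case by case, whether the pertinent angle is small or close to $\pi$ --- and verifying throughout that the lower-order errors $O(d)$ and $O(\mu\angle(\xi_{1},\xi_{2})^{2}\lambda^{-1})$ never overtake the leading terms. The geometry itself is the elementary near-cone expansion used above, in parallel with Section~6 of \cite{Sterbenz2}.
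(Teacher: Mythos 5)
Your proposal is correct and follows essentially the same route as the paper, which gives no independent proof but defers to the sector-decomposition argument of \cite{Sterbenz1,Sterbenz2}: insert bounded-overlap angular partitions at scale $(d/\mu)^{1/2}$ on the two inputs and the output, and use the near-cone expansion of the modulation relation to see that only the stated (anti)diagonal triples of sectors have nonempty joint Fourier support. Your two geometric steps --- the cancellation of the $\lambda$-size terms (enforced by the branch superscripts), leaving $1-\cos\angle(\xi_{1},\pm\xi_{2})\lesssim d/\mu$, and the law-of-sines bound $\lesssim(\mu/\lambda)(d/\mu)^{1/2}$ pinning the output direction to $\mp\omega_{3}$ --- are exactly the computations behind the cited argument, with the sign bookkeeping matching the superscripts in the lemma.
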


\par While for the High $\times$ Low interaction case, due to symmetry, a very similar decomposition lemma holds, to see this point, we can switch the role of $\omega_{2}$ and $\omega_{3}$ and the role of $b$ and $c$, in the Low $\times$ High wide angle decomposition lemma \ref{lowhigh}. Thus for  the High $\times$ Low interaction associated with $\tilde{Q}_{a,b}$, the angle between $\omega_{1}$ and $\omega_{3}$ would be the angle between $\omega_{1}$ and $\omega_{2}$ in lemma \ref{lowhigh} ( remember that the role of $b$ and $c$ are also switched), after switching we can see that 
$
|\omega_{1} - ab\cdot\omega_{3}| \sim ({d}/{\mu})^{1/2}
$ still holds. 
Switching the role of $\omega_{2}$ and $\omega_{3}$ doesn't change the 
angle between $\omega_{2}$ and $\omega_{3}$, thus $|\omega_{2}- ab\cdot \omega_{3}| \sim ({d}/{\mu})^{1/2}$ remains true in the High $\times$ Low interaction case.
\begin{remark}
From above, we can see that for the angular decompositions associated
with $Q_{a,b}$, $|\omega_{2} - ab
\cdot \omega_{3}| \sim ({d}/{\mu})^{1/2}$ always holds for all interaction cases, as for two sector localized inputs, the size of symbol $Q_{a,b}$ is $\angle (\omega_{1}, ab\cdot \omega_{3})$, i.e We can gain $ ({d}/{\mu})^{1/2}$ from $Q_{a,b}$ in the angular decomposition. Very similarly, we can verify that we can also gain $ ({d}/{\mu})^{1/2}$ from $\tilde{Q}_{a,b}$, as for all possible cases, $|\omega_{1}- ab\cdot \omega_{3}| \sim ({d}/{\mu})^{1/2}$ holds in the angular decomposition  and the symbol of $\tilde{Q}_{a,b}$ has size $\angle
(\omega_{1}, ab \omega_{3})$.

\end{remark}

Notice that above decomposition are all based on the fact that $\mu\leq c \lambda$.  When $\mu$ and $\lambda$ have similar size, i.e $c \lambda \leq \mu \lesssim \lambda$. In this case, we know that $\exists C >0$ s.t $S_{\lambda} f = S_{\lambda, \bullet \leq C \mu}^{a} f $ for any $a\in\{+,-\} $. Therefore, we can view term $S_{\mu}(T_{a, b} (S_{\lambda} u, S_{\lambda} v)) $, $T_{a,b}\in\{Q_{a, b}, \tilde{Q}_{a,b}\}$ as $S_{\mu}(T_{a,b} (S_{\lambda, \bullet \leq C\mu}^{c_{1}} u, S_{\lambda, \bullet \leq C \mu}^{c_{2}} v ))$, for suitable choices of $c_{1}, c_{2}\in\{+,-\}$. We can see that it's of  $B_{i}$ ($\tilde{B}_{i}$ for the Low $\times$High interaction) $i\in\{1,2\}$ type terms, hence we can apply the angular decomposition lemma \ref{highhighangular} and \ref{lowhigh}. And it's the only type of terms that we have to deal with when $c\lambda\leq \mu \lesssim \lambda.$

\section{Proof of the main theorem}\label{proof}

In this section, we will give the proof of the main theorems: theorem \ref{maintheorem} and theorem \ref{scattering}. To prove theorem \ref{maintheorem}, it would be sufficient to prove the result for initial data lies in
$(\dot{B}_{2,1}^{0,1},\dot{B}^{\frac{1}{2},1}_{2,1}, \dot{B}^{-\frac{1}{2},1}_{2,1})$, higher regularity result would be derived automatically once we proved all types of bilinear estimates appeared in the critical case.
While for the scattering theorem \ref{scattering}, once our contraction argument works, we would know that $\widetilde{F}_{\Omega}$ norm of $\psi$ and $F^{1/2}_{\Omega}$  norm  of  $\phi$
are bounded, and we could decompose the space-time localized solution into three parts: homogeneous wave solution part, $\langle \Omega\rangle^{-1}X^{\frac{1}{2},1}_{\lambda} \cap  Z_{\Omega, \lambda}$ part and $\langle\Omega\rangle^{-1} Y_{\lambda}$ part. For homogeneous wave part, it's automatically scattering in time; 
for $\langle \Omega\rangle^{-1}X^{\frac{1}{2},1}_{\lambda} \cap  Z_{\Omega, \lambda}$ part, it will approaching to zero as time $t\rightarrow \pm\infty$; for $\langle\Omega\rangle^{-1} Y_{\lambda}$  part, as it's $\langle\Omega\rangle^{-1} Y_{\lambda}$  norm 
is bounded thus  we could construct $(\psi_{0}^{\pm},\phi_{0}^{\pm},\phi_{1}^{\pm})$ explicitly via the Duhamel formula. Detail proof about above argument  could be found in \cite{Sterbenz1,Sterbenz2}.

From now on, we  can restrict ourself to the critical case.   Let's first reduce the proof of the theorem \ref{maintheorem} into the desired estimates.  To apply the Picard iteration method, it would be sufficient to prove the following bilinear 
estimates for functions $u\in F^{1/2}_{\Omega}$ and $v,v_{1}, v_{2} \in \widetilde{F}_{\Omega}$:
\begin{equation}\label{desired1}
\| V_{\pm}\mathcal{N}_{\pm} (u, v) \|_{\widetilde{F}_{\Omega}} \lesssim \| u \|_{F^{1/2}_{\Omega}} \, \| v\|_{\widetilde{F}_{\Omega}},
\end{equation}
\begin{equation}\label{desired2}
\| V \mathcal{N}(v_{1}, v_{2}) \|_{F^{1/2}_{\Omega}}\,\lesssim \| v_{1} \|_{\widetilde{F}_{\Omega}} \, \|v_{2}\|_{\widetilde{F}_{\Omega}}.
\end{equation}
\par After applying Littlewood-Paley decomposition to the nonlinearities $\mathcal{N}_{\pm}(u, v)$ and $\mathcal{N}(v_{1}\, , v_{2})$
 with respect to space-time frequency as following:
\begin{displaymath}
\mathcal{N}_{\pm}(u, v) =\sum_{b\in\{+,-\}} \sum_{\mu, \lambda\in 2^{\mathbb{Z}}} \tilde{Q}_{\pm,b} (S_{\mu} u\,, S_{\lambda} v ),\mathcal{N} (v_{1}, v_{2}) = \sum_{a,b\in\{+,-\}}\sum_{\mu, \lambda\in 2^{\mathbb{Z}}} Q_{a,b} ( S_{\mu} v_{1}, S_{\lambda} v_{2} ),
\end{displaymath}
after 
 separating into Low $\times$ High, High $\times$ Low  and High $\times$ High interaction cases, it would be sufficient to prove the following estimates for each fixed $a, b \in\{+,-\}$, $\mu \lesssim \lambda$:

\begin{equation}\label{desired1}
\| S_{\lambda} V  Q_{a,b} ( S_{\mu} v_{1}, S_{\lambda } v_{2}) \|_{F^{1/2}_{\Omega}} \lesssim \| S_{\mu} v_{1} \|_{\widetilde{F}_{\Omega}} \|S_{\lambda} v_{2} \|_{\widetilde{F}_{\Omega}}, \| S_{\lambda} V_{a}  \tilde{Q}_{a,b} ( S_{\mu} u, S_{\lambda } v) \|_{\widetilde{F}_{\Omega}} \lesssim \|S_{\mu} u \|_{F^{1/2}_{\Omega}} \| S_{\lambda} v \|_{\widetilde{F}_{\Omega}}
\end{equation}

\begin{equation}\label{desired2}
\| S_{\lambda} V Q_{a,b} ( S_{\lambda} v_{1}, S_{\mu } v_{2}) \|_{F^{1/2}_{\Omega}} \lesssim \| S_{\lambda}v_{1} \|_{\widetilde{F}_{\Omega}} \| S_{\mu} v_{2} \|_{\widetilde{F}_{\Omega}}, \| S_{\lambda} V_{a}  \tilde{Q}_{a,b} ( S_{\lambda} u, S_{\mu } v) \|_{\widetilde{F}_{\Omega}} \lesssim \| S_{\lambda}u \|_{F^{1/2}_{\Omega}} \|S_{\mu} v \|_{\widetilde{F}_{\Omega}},
\end{equation}

\begin{equation}\label{desired3}
\sum_{\mu \lesssim \lambda} \,\, \| S_{\mu} V Q_{a,b} (S_{\lambda} v_{1} \,,\, S_{\lambda} v_{2}) \|_{F^{1/2}_{\Omega}} \lesssim \|S_{\lambda} v_{1} \|_{\widetilde{F}_{\Omega}} \,\| S_{\lambda} v_{2} \|_{\widetilde{F}_{\Omega}},
\end{equation}

\begin{equation}\label{desired4}
\sum_{\mu \lesssim \lambda} \,\, \| S_{\mu} V_{a} \tilde{Q}_{a,b} (S_{\lambda} u \,,\, S_{\lambda} v) \|_{\widetilde{F}_{\Omega}} \lesssim \|S_{\lambda}u \|_{{F}_{\Omega}^{1/2}} \,\| S_{\lambda}v \|_{\widetilde{F}_{\Omega}}.
\end{equation}

Let's first prove the Low $\times$ High and High $\times$ Low interaction cases, i.e prove (\ref{desired1}) and (\ref{desired2}).  In this Low $\times$ High and High $\times$ Low interaction case, roughly, we have $S_{\lambda} V T_{a,b}( S_{\mu} f, S_{\lambda} g) \approx V S_{\lambda} T_{a,b}(S_{\mu} f, S_{\lambda} g)$, $T_{a, b}\in\{Q_{a,b}, \tilde{Q}_{a,b}\}.$  $(i):$When $ c \lambda \leq \mu \lesssim \lambda$, we know that 
$S_{\lambda}T_{a,b}(S_{\mu}f, S_{\lambda} g) $ is of $\tilde{B}^{i}$, $i\in\{1,2\}$ type, hence the desired estimate (\ref{desired1}) and (\ref{desired2}) would follow from the result of lemma \ref{lowhighnear}. $(ii)$ When $ \mu \leq c \lambda$, for the bilinear estimate associated with operator ${Q}_{a,b}$, recall the decomposition (\ref{equation60000}) for the Low $\times$ High interaction that we did in the section \ref{Bilinear} and from the estimates (\ref{equation31013}) in lemma \ref{lowhighfar}, (\ref{equation41000}) in lemma \ref{far} and 
(\ref{equation55007})
in lemma \ref{lowhighnear}, we know that the desired Low $\times$ High bilinear estimate associated with ${Q}_{a,b}$ holds and High $\times$ Low interaction for $Q_{a,b}$ follows from symmetry. $(ii:1)$ While for the Low $\times$ High interaction associated with operator $\tilde{Q}_{a,b}$. If $a\cdot b < 0$, recall the decomposition (\ref{equation30001}) and the estimates (\ref{equation40004}) and (\ref{equation40005}) of lemma \ref{far}, we see the desired estimate holds for this case; if $a\cdot b > 0$, recall the decomposition (\ref{equation71001})  and the estimates (\ref{equation31000}) in lemma \ref{lowhighfar} and  (\ref{equation21000}) in lemma \ref{lowhighnear}, we see that the desired estimate also holds in this case. $(ii:2)$ For the High $\times$Low interaction associated with operator $\tilde{Q}_{a,b}$, recall the decomposition (\ref{equation71002}) and the estimates (\ref{equation31003}) in lemma \ref{lowhighfar}, (\ref{equation70000}) in lemma \ref{far} and (\ref{equation55009}) in lemma \ref{lowhighnear}, we could see that the desired estimate also holds. To sum up, (\ref{desired1}) and (\ref{desired2}) also holds when $\mu \leq  c\lambda.$

\vspace{1\baselineskip}
Now let's proceed to the High$\times $ High interaction case, 
due to the fact that operator $V$ is not commute with $S_{\mu}$ in the High $\times$High interaction, as the resulting part of space-time frequencies lies  in the cone would be different. Recall the formula \cite{Sterbenz2}[equation 105],
we can derive the following formula for any two well defined $v_{1}, v_{2}$ and $T_{a,b}\in \{Q_{a,b}, \tilde{Q}_{a,b}\}$:
\begin{equation}\label{equation19}
S_{\mu} V T_{a, b}(S_{1} v_{1}, S_{1} v_{2}) = V S_{\mu} T_{a,b} (S_{1} v_{1}, S_{1} v_{2}) - \sum_{\sigma : \mu < \sigma \leq 1} W(P_{\mu} S_{\sigma, \sigma} V\,T_{a,b} (S_{1} v_{1}, S_{1} v_{2}) ).
\end{equation}
For the commutator terms: the second term of the right hand side of (\ref{equation19}), it could be estimate by the results of lemma \ref{commutator}. Thus the reminded terms to be estimates are the first term of the right hand side of (\ref{equation19}). $(i)$ When $\mu \leq  c \lambda$, recall the decompositions (\ref{equation30005}), (\ref{equation71004}) and (\ref{equation71000}) we did in section \ref{Bilinear}, hence from the estimates of lemma \ref{highhighfar}, lemma \ref{far} and lemma  \ref{highhighnear}, we could see that the desired estimates (\ref{desired3}) and (\ref{desired4}) holds for all possible sign of $a$ and $b$. $(ii)$ When $c \lambda \leq \mu \lesssim \lambda$, we have $S_{\mu} T_{a,b}(S_{\lambda} f, S_{\lambda} g) = S_{\mu} T_{a,b}(S_{\lambda, \bullet \leq C \mu}^{c} f , S_{\lambda, \bullet \leq C \mu}^{d} g )$, $T_{a,b}\in \{Q_{a,b}, \tilde{Q}_{a,b}\}$ $c$ and $d$ are suitable signs depend on $T_{a,b}$. Hence, we can apply the result of lemma \ref{highhighnear}, the it's easy to see the desired estimates (\ref{desired3}) and (\ref{desired4}) also hold.
\subsection{Input or output frequency far away from cone}

\begin{lemma}[High$\times$ High far away from cone]\label{highhighfar}
For $\psi_{1}, \psi_{2}\in \widetilde{F}_{\Omega}$ and $\phi\in F^{1/2}_{\Omega}$, $c_{1} \geq c_{2}+ 4  \geq 6$, we have the 
following estimates:
\begin{equation}\label{equation20006}
\| V S_{\mu}  Q_{a,b} (S_{\lambda,\bullet \geq c_{1}\mu}^{a} \psi_{1}, S^{b}_{\lambda, \bullet \geq c_{2} \mu} \psi_{2})  \|_{F^{1/2}_{\Omega}} 
\lesssim \mu^{\frac{1}{4}-} \| S_{\lambda}\psi_{1}\|_{\widetilde{F}_{\Omega}} \| S_{\lambda} \psi_{2}\|_{\widetilde{F}_{\Omega}},
\end{equation}

\begin{equation}\label{equation20007}
\|  V_{a} S_{\mu} \tilde{Q}_{a,b} (S_{\lambda, \bullet \geq c_{1}\mu} \phi, S^{b}_{\lambda, \bullet \geq c_{2} \mu} \psi_{1})  \|_{\tilde{F}_{\Omega}}
\lesssim \mu^{\frac{1}{4}-} \|  S_{\lambda} \phi\|_{F^{1/2}_{\Omega}} \| S_{\lambda} \psi_{1} \|_{\widetilde{F}_{\Omega}}.
\end{equation}
\end{lemma}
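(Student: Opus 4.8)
The plan is to exploit the fact that at least one of the two inputs is far from its characteristic cone, i.e.\ has modulation $\gtrsim \mu$, so that the corresponding $X^{1/2,1}$-type norm (equivalently (\ref{equation3})) is directly available and carries a favorable power of the modulation. First I would place the output in the $Y_\mu$ (resp.\ $Y^a_\mu$) component of $F_{\Omega,\mu}$ (resp.\ $F^a_{\Omega,\mu}$), since the parametrix $V$ (resp.\ $V_a$) maps $L^1_tL^2_x$ into exactly that space: by definition of the $F$-norms it suffices to bound $\mu^{1/2}\|\langle\Omega\rangle S_\mu Q_{a,b}(\cdots)\|_{L^1_tL^2_x}$ (and the analogous Dirac quantity). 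Distributing $\langle\Omega\rangle$ by the heuristic chain rule onto the two inputs (using that $\langle\Omega\rangle$ commutes with the Fourier transform, as recorded after the definition of $Z_{\Omega,\lambda}$), the problem reduces to an $L^2_x\times L^2_x\to L^1_tL^2_x$ estimate, i.e.\ by Cauchy--Schwarz in $t$ and Hölder to an $L^2_tL^{q}_x\times L^2_tL^{q'}_x$ bilinear estimate for suitable dual exponents.

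Next I would use the improved Strichartz embedding. The factor $\mu^{1/2}$ coming from the $Y_\mu$-definition together with the Bernstein/Sobolev loss in passing $\|\cdot\|_{L^\infty_tL^2_x}$-type control into an $L^q_x$ space with $q$ slightly above $4$, balanced against the modulation gain of size $\mu^{-1/2}$ (resp.\ a full power of the modulation, which is $\gtrsim\mu$ here by hypothesis on $S^a_{\lambda,\bullet\ge c_1\mu}$ and $S^b_{\lambda,\bullet\ge c_2\mu}$) from (\ref{equation3}), is what produces the net $\mu^{1/4-}$. Concretely: one input is estimated via (\ref{equation2}) in $L^2_tL^{4+}_x$, picking up $\lambda^{1/4+}\|\langle\Omega\rangle^{-(1/2-)}S_\lambda\psi_j\|_{F_{\Omega,\lambda}}$; the other input, which has modulation $\ge c_i\mu$, is put in $L^\infty_tL^2_x$ after summing the modulation-localized pieces $d^{1/2}\|S^b_{\lambda,d}\psi_2\|_{L^2_tL^2_x}$ against $d^{-1/2}$ over the dyadic range $c_i\mu\le d\lesssim\lambda$, which converges and gains $\mu^{-1/2+}$ relative to the worst piece. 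Since the output is localized at $\mu$, we have $S^\omega$-type support restrictions forcing the $L^{4+}_x$-Hölder conjugate to cost only $\mu^{(\cdots)}$; tallying the exponents $\tfrac12-\tfrac12+\tfrac14 - (\text{small losses}) $ gives $\mu^{1/4-}$, and the $\lambda^{1/4+}$ is absorbed because the two inputs sit at frequency $\lambda$ and, after the $\langle\Omega\rangle^{\pm(1/2-)}$ bookkeeping, one verifies the net $\lambda$-power is nonpositive (indeed the transversality from $\mu\ll\lambda$ plus the bilinear $L^2_t$ improvement removes any positive $\lambda$ power).

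For the Dirac output (\ref{equation20007}) the same scheme applies verbatim, replacing $V$ by $V_a$, $Y_\mu$ by $Y^a_\mu$, and using $\|\phi\|_{F^{1/2}_\Omega}$-normalization for the meson input (which changes the $\lambda^{1/4+}$ to $\lambda^{-1/4+}$ for that slot since the $\dot B^{1/2,1}$-scaling of $\phi$ already supplies $\lambda^{1/2}$); the null structure of $\tilde Q_{a,b}$ (Lemma \ref{angle}) is \emph{not even needed} here, precisely because the large-modulation hypothesis already provides the decisive gain — null structure is only decisive in the companion ``near cone'' lemmas. The main obstacle I anticipate is the careful exponent accounting: one must verify simultaneously that (a) the $\mu$-exponent lands at $\tfrac14-$ and not something smaller, (b) the $\lambda$-exponent is $\le 0$ so the High $\times$ High sum over $\lambda$ (and the outer $\sum_{\mu\lesssim\lambda}$ in (\ref{desired3})--(\ref{desired4})) converges, and (c) the $\langle\Omega\rangle$ regularity bookkeeping stays within the $\pm(1/2-)$ budget allowed by (\ref{strichartzembedding})--(\ref{equation2}) and the angular concentration lemma \ref{Angular}; getting all three at once, rather than any one in isolation, is where the delicacy lies, and it is also the reason the statement carries the mild losses ``$\tfrac14-$'' and ``$c_1\ge c_2+4\ge 6$''.
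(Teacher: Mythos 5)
Your reduction to an $L^1_tL^2_x$ bound of the nonlinearity is the right first step, but the prefactor is off in a way that hides the real difficulty: since $\|u\|_{Y_\mu}=\mu^{-1}\|\Box S_\mu u\|_{L^1_tL^2_x}$ and the $F^{1/2}_\Omega$ norm carries the weight $\mu^{1/2}$, placing $V S_\mu Q_{a,b}(\cdots)$ in $Y_\mu$ costs $\mu^{-1/2}\|\langle\Omega\rangle S_\mu Q_{a,b}(\cdots)\|_{L^1_tL^2_x}$, not $\mu^{+1/2}\|\cdots\|$. With the correct prefactor, your bookkeeping does not close (\ref{equation20006}) in the regime where both input modulations lie between $c_i\mu$ and a fixed small constant: putting one input in $L^2_tL^2_x$ via (\ref{equation3}) (losing $\sum_{d\ge c_i\mu}d^{-1/2}\sim\mu^{-1/2}$), the other in $L^2_tL^{4+}_x$ via (\ref{equation2}), and using Bernstein at output frequency $\mu$ (gaining $\mu^{3/4-}$), the tally is $\mu^{-1/2}\cdot\mu^{-1/2}\cdot\mu^{3/4-}=\mu^{-1/4-}$, which is not $\mu^{1/4-}$ and would in particular ruin the outer sum over $\mu\lesssim\lambda$ in (\ref{desired3}). (Also note that your proposed pairing $L^\infty_tL^2_x\times L^2_tL^{4+}_x$ cannot produce an $L^1_t$ quantity; the $L^1_t$ target forces an $L^2_t\times L^2_t$ pairing, so the $d^{-1/2}$ loss is unavoidable in this scheme.)

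The paper closes exactly this regime with the null structure of $Q_{a,b}$, which your proposal never invokes for (\ref{equation20006}): in the High$\times$High case with output frequency $\mu\ll\lambda$ the signs are forced to satisfy $a\cdot b>0$, the difference of the two input modulations is $\lesssim\mu$, and when both inputs lie within a small constant of the cone the angle between their spatial frequencies is $\lesssim\mu$; Lemma \ref{angle} then yields an extra factor $\mu$, upgrading $\mu^{-1/4-}$ to $\mu^{3/4-}$, while the complementary regime (at least one modulation $\gtrsim 1$) gives $\mu^{1/4-}$ by the crude estimate — hence the stated bound. So for (\ref{equation20006}) the null structure is decisive even though both inputs are ``far from the cone'' in the sense of modulation $\ge c_i\mu$. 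Your remark that no null structure is needed for (\ref{equation20007}) does agree with the paper, and for the right reason: the Dirac output norm carries no $\mu^{1/2}$ weight and $Y^a_\mu$ no $\mu^{-1}$, so the only loss is the modulation sum $\mu^{-1/2}$ against the Sobolev/Strichartz gain $\mu^{3/4-}$.
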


\begin{proof}
Let's first prove (\ref{equation20006}), the proof of (\ref{equation20007}) will be very similar.  As the space we are working on are scale invariant, it would be sufficient to prove (\ref{equation20006}) under the assumption $\lambda =1$. Notice that in the High $\times$ High interaction associated with $Q_{a,b}$ and when $\mu\lesssim c$, $c$ is a sufficient small constant, then $a$ and $b$ has to have same sign (check (\ref{equation3000})). For $(\tau_{1}, \xi_{1}) \in \emph{supp} (\widehat{S_{1, \bullet \lesssim c }^{a} \psi_{1}})$ and $(\tau_{2}, \xi_{2}) \in \emph{supp} (\widehat{S_{1, \bullet \lesssim c}^{b} \psi_{2}})$, we have 
 \[
||\tau_{i}| - |\xi_{i}|| \lesssim  c,\, ||\tau_{i}| + |\xi_{i}|| \sim 1, \, |\tau_{1} - \tau_{2}| + |\xi_{1} - \xi_{2}| \lesssim \mu,\, i \in \{1,2\},
 \]
 hence
 \[
 |\xi_{1}| \sim 1, |\xi_{2}| \sim 1, |\xi_{1} - \xi_{2}| \lesssim \mu \Longrightarrow \angle(\xi_{1}, \xi_{2}) \lesssim \mu.
 \]
Therefore, when the modulations of inputs are both close to the cone, if $a\cdot b > 0$, we can gain extra $\mu$ from the null structure. While for the case $a\cdot b < 0$, then $\mu$ can't be sufficiently small, $\mu \sim 1$.  Another observation is that as the distance between two space-time frequencies is less than $\mu$, then the difference of two modulations can't be too large and is controlled by $2\mu$. Hence, when $\mu \leq c $, we have
\[
 \| V S_{\mu}  Q_{a,b} (S_{1,\bullet \geq c_{1}\mu}^{a} \psi_{1}, S^{b}_{1, \bullet \geq c_{2} \mu} \psi_{2}) \|_{F^{1/2}_{\Omega}} \lesssim \mu^{-\frac{1}{2}} 
\|S_{\mu} Q_{a,b} (S_{1,\bullet \geq c_{1}\mu}^{a}\langle \Omega \rangle \psi_{1}, S^{b}_{1, \bullet \geq c_{2} \mu} \psi_{2})\|_{L^{1}_{t}L^{2}_{x}} + \mu^{-\frac{1}{2}} \cdot \]

\[
\|S_{\mu} Q_{a,b} (S_{1,\bullet \geq c_{1}\mu}^{a} \psi_{1}, S^{b}_{1, \bullet \geq c_{2} \mu} \langle \Omega \rangle \psi_{2})\|_{L^{1}_{t}L^{2}_{x}}  \lesssim \mu^{-\frac{1}{2}}
 \|S_{\mu} Q_{a,b} (S_{1, 2 c \geq \bullet \geq c_{1}\mu}^{a}  \langle \Omega \rangle\psi_{1}, S^{b}_{1, 4 c \geq \bullet \geq c_{2} \mu} \psi_{2}) \|_{L^{1}_{t}L^{2}_{x}} 
\]

\[
+\mu^{-\frac{1}{2}}  \|S_{\mu} \big[ Q_{a,b} (S_{1,  \bullet \geq 2 c}^{a} \langle \Omega \rangle \psi_{1}, S^{b}_{1,   \bullet \geq c} \psi_{2}) \big]\|_{L^{1}_{t}L^{2}_{x}} +\mu^{-\frac{1}{2}}  \|S_{\mu} \big[ Q_{a,b} (S_{1,  \bullet \geq 2 c}^{a}  \psi_{1}, S^{b}_{1,   \bullet \geq c} \langle \Omega \rangle \psi_{2}) \big]\|_{L^{1}_{t}L^{2}_{x}} 
\]

\[
+ \mu^{-\frac{1}{2}}
 \|S_{\mu} Q_{a,b} (S_{1, 2 c \geq \bullet \geq c_{1}\mu}^{a}  \psi_{1}, S^{b}_{1, 4 c \geq \bullet \geq c_{2} \mu} \langle \Omega \rangle\psi_{2}) \|_{L^{1}_{t}L^{2}_{x}}
\]
\[
 \lesssim  \mu^{\frac{5}{4}-}\big(\sum_{4 c_{}\geq d \geq c_{2}\mu} \| S^{a}_{1,d}\langle \Omega \rangle \psi_{1}\|_{L^{2}_{t}L^{2}_{x}} \|S^{b}_{1, 4 c_{} \geq \bullet \geq c_{2} \mu} \psi_{2}\|_{L^{2}_{t}L^{4+}_{x}} + \| S_{1,d}^{b}\langle \Omega \rangle \psi_{2}\|_{L^{2}_{t}L^{2}_{x}} \| S_{1, 2 c_{} \geq \bullet \geq c_{1}\mu}^{a}  \psi_{1}\|_{L^{2}_{t}L^{4+}_{x}}\big)
  \]

\[
+ \mu^{\frac{1}{4}-} \big(\sum_{d \geq c_{}} \| S^{a}_{1,d}\langle \Omega \rangle \psi_{1}\|_{L^{2}_{t}L^{2}_{x}} \|S^{b}_{1,  \bullet \geq c_{}} \psi_{2}\|_{L^{2}_{t}L^{4+}_{x}} + \| S_{1,d}^{b}\langle \Omega \rangle \psi_{2}\|_{L^{2}_{t}L^{2}_{x}} \| S_{1,  \bullet \geq  2c_{}}^{a}  \psi_{1}\|_{L^{2}_{t}L^{4+}_{x}}\big)
\]
\[
\lesssim  \mu^{\frac{5}{4}-} \sum_{d \geq c_{2}\mu} d^{-\frac{1}{2}} \| S_{1} \psi_{1}\|_{\widetilde{F}_{\Omega}} \| S_{1}\psi_{2}\|_{\widetilde{F}_{\Omega}} + \mu^{\frac{1}{4}-} \sum_{d \geq c_{}} d^{-\frac{1}{2}} \| S_{1} \psi_{1}\|_{\widetilde{F}_{\Omega}} \| S_{1} \psi_{2}\|_{\widetilde{F}_{\Omega}}
\]

\begin{equation}\label{equation20008}
 \lesssim \mu^{\frac{1}{4}-} \|S_{1} \psi_{1}\|_{\widetilde{F}_{\Omega}} \|S_{1} \psi_{2}\|_{\widetilde{F}_{\Omega}}.
\end{equation}
When $c\leq \mu \lesssim 1$, we don't need to utilize the null structure to gain $\mu$ and follow the same method used in the proof of (\ref{equation20008}), we can derive the upper bound $\mu^{-({1}/{4}+)}\| \psi_{1}\|_{\widetilde{F}_{\Omega}} \| \psi_{2}\|_{\widetilde{F}_{\Omega}}$, as $\mu\sim 1$, hence (\ref{equation20006}) also holds.

\par The proof of (\ref{equation20007}) is very similar, however, as th null structure of $\tilde{Q}_{a,b}$ is the angle between one of the inputs and the output, generally we don't expect to gain smallness in the High $\times$ High interaction, the angle could be large. While very luckily, in (\ref{equation20007}), we won't loss extra $\mu^{1/2}$ from the regularity, the total loss from modulation will be $\mu^{1/2}$ and we can gain $\mu^{3-/4}$ from the Sobolev embedding (See lemma \ref{Sobolev}) after utilizing the improved Strichartz estimate (see (\ref{equation30})). Total gain dominate total loss, hence (\ref{equation20007}) holds.

\end{proof}

\begin{lemma}[Low$\times$ High and High $\times$ Low far away from cone]\label{lowhighfar}
For $\psi_{1}, \psi_{2}\in \widetilde{F}_{\Omega}$ and $\phi\in F^{1/2}_{\Omega}$, $c_{1} \geq c_{2}+ 4 \geq c_{3}+ 8 \geq 12$, we have the 
following estimates:

\begin{equation}\label{equation31013}
\| V   S_{\lambda, \bullet \geq c_{3}\mu} Q_{a,b} (S_{\mu} \psi_{1}, S_{\lambda, \bullet \geq c_{2} \mu}^{b} \psi_{2})\|_{F^{1/2}_{\Omega}}
 \lesssim
\| S_{\mu}\psi_{1}\|_{\widetilde{F}_{\Omega}} \| S_{\lambda} \psi_{2}\|_{\widetilde{F}_{\Omega}},
\end{equation}

\begin{equation}\label{equation31003}
\| V_{a}   S_{\lambda, \bullet \geq c_{3}\mu}^{a} \tilde{Q}_{a,b} (S_{\lambda, \bullet \geq c_{2} \mu} \phi, S_{\mu} \psi_{1}) \big]\|_{\widetilde{F}_{\Omega}}
 \lesssim  
\| S_{\lambda}\phi\|_{F^{1/2}_{\Omega}} \| S_{\mu} \psi_{1}\|_{\widetilde{F}_{\Omega}},
\end{equation}

\begin{equation}\label{equation31000}
\| V_{a}   S_{\lambda, \bullet \geq c_{3}\mu}^{a} \tilde{Q}_{a,b} (S_{\mu} \phi, S_{\lambda, \bullet \geq c_{2} \mu}^{b} \psi_{1}) \big]\|_{\widetilde{F}_{\Omega}}
 \lesssim
\| S_{\mu}\phi\|_{F^{1/2}_{\Omega}} \| S_{\lambda} \psi_{1}\|_{\widetilde{F}_{\Omega}}, \quad \emph{here $a\cdot b > 0$ }.
\end{equation}

\end{lemma}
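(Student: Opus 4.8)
The plan is to prove the three estimates (\ref{equation31013}), (\ref{equation31003}), (\ref{equation31000}) in parallel, exploiting the one feature they share: the \emph{output} space-time frequency is separated from the cone by at least $c_3\mu$. First I would use the scale invariance of all the function spaces to normalize $\lambda=1$, so from now on $\mu\leq c\ll 1$. Since the output of each bilinear form lives in the region $||\tau|-|\xi||\gtrsim\mu$ at unit spatial frequency, the homogeneous wave (resp.\ Dirac) correction hidden inside $V$ (resp.\ $V_a$) is supported exactly on the cone and is therefore killed by $S_{1,d}$ for $d>0$; hence on the output $V$ acts as $\Box^{-1}$ and $V_a$ as $L_a^{-1}$, so $S_{1,d}V$ and $S^{a}_{1,d}V_a$ gain a factor $d^{-1}$ on $L^2_tL^2_x$. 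Moreover, for the $F^{1/2}_{\Omega}$- (resp.\ $\widetilde{F}_{\Omega}$-) norm of the output it suffices to control the $\langle\Omega\rangle^{-1}X^{1/2,1}_1$ (resp.\ $\langle\Omega\rangle^{-1}X^{1/2,1}_{1,a}$) component, since on an off-cone piece the $S_1(L^\infty_tL^2_x)$ component is dominated via Bernstein in $\tau$ and the $Z_{\Omega,1}$ component via the inclusion $\langle\Omega\rangle^{-1}Y_1\subseteq Z_{\Omega,1}$ established in Section \ref{Functionspace}. Combining these observations, each left-hand side is bounded by $\sum_{d\gtrsim\mu} d^{1/2}\,d^{-1}\,\|S_{1,d}\langle\Omega\rangle\,T_{a,b}(\text{inputs})\|_{L^2_tL^2_x}$ with $T_{a,b}\in\{Q_{a,b},\tilde{Q}_{a,b}\}$.

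The next step is to identify, in each case, the extra power of $\mu$ that lets us absorb the loss $\sum_{d\gtrsim\mu}d^{-1/2}\sim\mu^{-1/2}$. For (\ref{equation31013}) and (\ref{equation31003}) the low-frequency input (frequency $\mu$) automatically lies within $O(\mu)$ of its characteristic cone, and the elementary identity relating the output modulation $d$, the two input modulations, and the frequencies shows that whenever $d\geq c_3\mu$ the high-frequency input must have modulation $\delta\gtrsim d$ (otherwise the output modulation would be forced to be $\lesssim\mu$). Thus for fixed $d$ one sums only over $\delta\gtrsim d$, and $\sum_{\delta\gtrsim d}\delta^{-1/2}\sim d^{-1/2}$, which together with the $d^{-1}$ from $\Box^{-1}$/$L_a^{-1}$ and the $d^{1/2}$ from the $X^{1/2,1}$ weight leaves only $\sum_{d\gtrsim\mu}d^{-1}\sim\mu^{-1}$. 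For (\ref{equation31000}), where $a\cdot b>0$, I would instead read off the gain directly from the null structure: in a Low$\times$High interaction for $\tilde{Q}_{a,b}$ the output frequency $\xi$ and the high input frequency $\eta$ satisfy $|\xi-\eta|\lesssim\mu$, so $\angle(\xi,\eta)\lesssim\mu$, and since $a\cdot b>0$ the symbol $\beta\Pi_{-a}(\xi)\Pi_b(\eta)$ of $\tilde{Q}_{a,b}$ in (\ref{equation20001}) is $O(\angle(\xi,\eta))=O(\mu)$ on the relevant Fourier support by lemma \ref{angle}; this contributes an extra multiplicative $\mu$. (When $a\cdot b<0$ this gain is unavailable, which is exactly why the decomposition (\ref{equation30001}) pushes the high input far off the cone and that piece is treated in lemma \ref{far}.)

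With these gains in hand, the remainder is a bilinear H\"older estimate. After distributing $\langle\Omega\rangle$ over the product by the chain-rule heuristic as in the proof of lemma \ref{highhighfar}, each resulting term is a product of two functions; I would place the low-frequency factor into $L^\infty_tL^{q}_x$, using Bernstein at frequency $\mu$ to gain $\mu^{3(1/2-1/q)}$ (taking $q$ between $4$ and $\infty$) and bounding $\|\langle\Omega\rangle(\text{low factor})\|_{L^\infty_tL^2_x}$ by its $\widetilde{F}_{\Omega}$- (resp.\ $F^{1/2}_{\Omega}$-, which at $\lambda=1$ costs a harmless $\mu^{-1/2}$ relative to $F_{\Omega,\mu}$) norm through the $L^\infty_tL^2_x$ component, and place the high-frequency factor into $L^2_tL^{q'}_x$ either through the improved Strichartz embedding (\ref{equation2}) or, when it carries the modulation weight $\delta^{1/2}$, through (\ref{equation3}), with $q'$ the H\"older conjugate of $q$ (Bernstein at unit frequency converts freely between nearby $L^{q'}$ exponents). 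Balancing: for (\ref{equation31013}) and (\ref{equation31003}) one gets roughly $\mu^{3/2}\cdot\mu^{-1}=\mu^{1/2}$ (or $\mu^{3/4-}\cdot\mu^{-1/2}=\mu^{1/4-}$ depending on which factor receives $\langle\Omega\rangle$), and for (\ref{equation31000}) roughly $\mu^{-1/4-}\cdot\mu=\mu^{3/4-}$; all powers are nonnegative, so each sum is $\lesssim 1$. I expect the main obstacle to be precisely this last stage — verifying the modulation identity that forces $\delta\gtrsim d$, and then checking across all sub-cases (both signs of $a,b$, both choices of which factor carries $\langle\Omega\rangle$, and every relative cone configuration of the inputs, including the crude variants needed for the $S_1(L^\infty_tL^2_x)$ and $Z_{\Omega,1}$ components) that the $\mu$-powers never turn negative — rather than any individual analytic input, all of which are already supplied by Sections \ref{nullstructure} and \ref{Functionspace}.
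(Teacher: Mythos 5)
Your proposal is correct and in substance follows the paper's own route: normalize $\lambda=1$, use the off-cone localization of the output so that $V$ (resp.\ $V_a$) gains $d^{-1}$ against the $d^{1/2}$ weight of $X^{1/2,1}$, put the low-frequency factor in a Bernstein-improved Lebesgue space, and for (\ref{equation31000}) recover the $\mu^{-1/2}$ regularity loss of $\phi$ from the purely spatial fact $\angle(\xi_{1}+\xi_{2},\xi_{2})\lesssim\mu$ together with lemma \ref{angle} when $a\cdot b>0$ --- exactly the paper's observation (\ref{equation21001}). The genuine difference is the modulation bookkeeping in (\ref{equation31013})--(\ref{equation31003}): the paper places the high factor in $L^{2}_{t}L^{4+}_{x}$ via (\ref{equation2}), accepts the loss $\sum_{d\gtrsim\mu}d^{-1/2}\sim\mu^{-1/2}$ and beats it with $\mu^{3/4-}$ of Bernstein, whereas you decompose the high input's modulation, invoke $\delta\gtrsim d$, and use the $X^{1/2,\infty}$-type bound (\ref{equation3}); both close. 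Two caveats: your claim that $d\geq c_{3}\mu$ forces $\delta\gtrsim d$ is literally correct only when $d$ measures distance to the full cone (the wave-type output of (\ref{equation31013})); for the Dirac-type outputs of (\ref{equation31003}) and (\ref{equation31000}) the weight is the distance to the half-cone of sign $a$, and the output may sit near the opposite half-cone, where $d\sim 1$ while only $\delta\geq c_{2}\mu$ is available --- there the cruder loss $\mu^{-1/2}$ still leaves a positive power of $\mu$, so the argument survives, but this case distinction must be made. Also, as you yourself note, the $Z_{\Omega,1}$ and $L^{\infty}_{t}L^{2}_{x}$ components need separate (easy, $L^{1}_{t}L^{2}_{x}$-type) bounds; they do not follow from the $X^{1/2,1}$ estimate alone, and the paper's written proof likewise records only the $X^{1/2,1}$ component here.
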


\begin{proof}
As the function spaces are scale invariant, it would be sufficient to prove above estimates under assumption $\lambda =1$. Hence
\[
\| V   S_{1, \bullet \geq c_{3}\mu} Q_{a,b} (S_{\mu} \psi_{1}, S_{1,  \bullet \geq c_{2} \mu}^{b} \psi_{2})\|_{F^{1/2}_{\Omega}}  \lesssim \sum_{d \geq c_{3} {\mu}} d^{-\frac{1}{2}} \| S_{1,d} Q_{a,b} (S_{\mu} \langle \Omega \rangle \psi_{1}, S_{1, \bullet \geq c_{2}\mu}^{b} \psi_{2})\|_{L^{2}_{t} L^{2}_{x}} + 
\]
\[
 d^{-\frac{1}{2}}\| S_{1,d} Q_{a,b} (S_{\mu} \psi_{1}, S_{1,   \bullet \geq c_{2}\mu}^{b} \langle \Omega \rangle \psi_{2})\|_{L^{2}_{t}L^{2}_{x}} 
\lesssim \sum_{d \geq c_{3} {\mu}} d^{-\frac{1}{2}}
\| S_{\mu}\langle \Omega\rangle \psi_{1}\|_{L^{\infty}_{t}L^{4-}_{x}} \| S_{1, \bullet \geq c_{2}\mu}^{b} \psi_{2}\|_{L^{2}_{t}L^{4+}_{x}}\]

\[
+ d^{-\frac{1}{2}} \| S_{\mu}\psi_{1}\|_{L^{2}_{t}L^{\infty}_{x}}  \| S_{1,   \bullet \geq c_{1}\mu}^{b} \langle \Omega \rangle \psi_{2}\|_{L^{\infty}_{t}L^{2}_{x}} \lesssim \sum_{ d \geq c_{3}\mu} d^{-\frac{1}{2}} \mu^{\frac{3}{4}-} \| S_{\mu}\psi_{1}\|_{\widetilde{F}_{\Omega}} \| S_{1} \psi_{2}\|_{\widetilde{F}_{\Omega}}  + 
\]
\begin{equation}\label{equation31011}
\sum_{d\geq c_{3}\mu}d^{-\frac{1}{2}}\mu \| S_{\mu}\psi_{1}\|_{\widetilde{F}_{\Omega}} \| S_{1}\psi_{2}\|_{\widetilde{F}_{\Omega}} \lesssim \mu^{\frac{1}{4}-}\| S_{\mu}\psi_{1}\|_{\widetilde{F}_{\Omega}} \| S_{1}\psi_{2}\|_{\widetilde{F}_{\Omega}}.
\end{equation}

The proof of (\ref{equation31003}) is very similar, we omit the detail here. However
to prove (\ref{equation31000}), if one wants to use above rough estimate, the most we can gain is $\mu^{1/2-}$, while we will lose $\mu^{1/2}$ from the regularity of $\phi$. The observation is that we can gain extra $\mu$ from the null structure when both inputs are close to the cone.
 As we are in the Low $\times$ High interaction case, the angle between the output and the high input should be small when both of them are close to the cone. More precisely, suppose that $(\tau_{1}, \xi_{1}) \in \emph{supp} (\widehat{S_{\mu}\phi}) $, $(\tau_{2}, \xi_{2}) \in \emph{supp} (\mathcal{F}_{t,x}(S_{1, c \geq \bullet \geq c \mu}^{b} \psi_{1}))$, then we have
\[
|\tau_{1}| + |\xi_{1}| \sim \mu, \quad |\tau_{2}| + |\xi_{2}| \sim 1, \quad c\mu \leq ||\tau_{2}| - |\xi_{2}|| \leq c,
 \]
 hence
 \begin{equation}\label{equation21001}
|\xi_{2}| \sim |\xi_{1}+\xi_{2}| \sim 1, |\xi_{1}| \lesssim \mu \Longrightarrow \angle(\xi_{1}+\xi_{2}, \xi_{2}) \lesssim \mu,
 \end{equation}
 from (\ref{equation21001}), we can see that we actually can gain $\mu$ from the null structure of $\tilde{Q}_{a,b}$ as $a\cdot b > 0$, therefore
\begin{equation}
\| V_{a}   S_{1, \bullet \geq c_{3}\mu}^{a} \tilde{Q}_{a,b} (S_{\mu} \phi, S_{1, \bullet \geq c_{2} \mu}^{b} \psi_{1}) \big]\|_{\widetilde{F}_{\Omega}}
 \lesssim 
 \sum_{d \geq c} d^{-\frac{1}{2}} \mu^{\frac{1}{4}-} \| S_{\mu} \phi\|_{F^{1/2}_{\Omega}}\| S_{1} \psi_{1}\|_{\widetilde{F}_{\Omega}}  \end{equation}
\begin{equation}
+ \sum_{c\mu \leq d \leq c} d^{-\frac{1}{2}} \mu^{\frac{3}{4}-} \mu^{\frac{1}{2}}
\| S_{\mu} \phi\|_{F^{1/2}_{\Omega}}\| S_{1} \psi_{1}\|_{\widetilde{F}_{\Omega}}   \lesssim \mu^{\frac{1}{4}-}  \| S_{\mu} \phi\|_{F^{1/2}_{\Omega}}\| S_{1} \psi_{1}\|_{\widetilde{F}_{\Omega}}.
\end{equation}

\end{proof}
\begin{lemma}\label{far}
For $\psi_{1}, \psi_{2},\psi\in \widetilde{F}_{\Omega}$ and $\phi\in F^{1/2}_{\Omega}$, we have the  
following estimates:

\begin{equation}\label{equation40001}
\| V S_{\mu}Q_{a, b}(S^{a}_{\lambda, \bullet \geq c\lambda} \psi_{1}, S^{}_{\lambda} \psi_{2}) \|_{F^{1/2}_{\Omega}} \lesssim \mu^{\frac{3}{p}-\frac{1}{2}} \| S_{\lambda}\psi_{1}\|_{\widetilde{F}_{\Omega}} \| S_{\lambda}\psi_{2}\|_{\widetilde{F}_{\Omega}},
\end{equation}

\begin{equation}\label{equation40002}
\|V S_{\mu} Q_{a, b} (S^{a}_{\lambda, \bullet \leq c \lambda} \psi_{1}, S^{b}_{\lambda, \bullet \geq  c\lambda} \psi_{2}) \|_{F^{1/2}_{\Omega}}
\lesssim \mu^{\frac{3}{p}-\frac{1}{2}} \| S_{\lambda}\psi_{1}\|_{\widetilde{F}_{\Omega}} \| S_{\lambda}\psi_{2}\|_{\widetilde{F}_{\Omega}},
\end{equation}
\begin{equation}\label{equation40003}
\|V S_{\mu} \tilde{Q}_{a, b} (S^{b}_{\lambda, \bullet < c_{2} \mu} \phi, S^{b}_{\lambda, \bullet \geq  c\lambda} \psi) \|_{\widetilde{F}_{\Omega}}
\lesssim \mu^{\frac{3}{p}} \| S_{\lambda}\phi\|_{{F}_{\Omega}^{1/2}} \| S_{\lambda}\psi\|_{\widetilde{F}_{\Omega}},
\end{equation}

\begin{equation}\label{equation40004}
  \|  V_{a} S_{\lambda} \tilde{Q}_{a,b} (S_{\mu} \phi, S_{\lambda, \bullet \geq c \lambda}^{b} \psi)\|_{\widetilde{F}_{\Omega}}
\lesssim \| S_{\mu}\phi\|_{F^{1/2}_{\Omega}} \| S_{\lambda}\psi\|_{\widetilde{F}_{\Omega}},
\end{equation}

\begin{equation}\label{equation41000}
\|V S_{\lambda, \bullet \leq c_{3}\mu}^{b} Q_{a,b} (S_{\mu} \psi_{1}, S_{\lambda, \bullet \geq c \lambda}^{b} \psi_{2})\|_{F^{1/2}_{\Omega}} \lesssim \| S_{\mu}\psi_{1}\|_{\widetilde{F}_{\Omega}} \| S_{\lambda}\psi_{2}\|_{\widetilde{F}_{\Omega}},
\end{equation}

\begin{equation}\label{equation40005}
\|V_{a} S^{a}_{\lambda, \bullet \geq c\lambda} \tilde{Q}_{a, b}(S_{\mu} \phi, S^{b}_{\lambda, \bullet < c \lambda} \psi)\|_{\widetilde{F}_{\Omega}} \lesssim \| S_{\mu} \phi\|_{F^{1/2}_{\Omega}} \| S_{\lambda} \psi\|_{\widetilde{F}_{\Omega}},
\end{equation}
 
 \begin{equation}\label{equation70000}
 \| V_{a} S^{a}_{\lambda, \bullet \geq c\lambda} \tilde{Q}_{a,b}( S^{-a}_{\lambda, \bullet < c_{2}\mu} \phi, S_{\mu}\psi)\|_{\widetilde{F}_{\Omega}} \lesssim  \| S_{\lambda} \phi\|_{F^{1/2}_{\Omega}} \| S_{\mu}\psi \|_{\widetilde{F}_{\Omega}}.
 \end{equation}

\end{lemma}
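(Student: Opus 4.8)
The plan is to reduce all seven estimates to a single scheme, exploiting that every space here is scale invariant so that it suffices to take $\lambda=1$. The unifying structural feature of the list is that in each estimate exactly one object -- an input or the output -- carries a modulation cutoff $S_{\cdot,\bullet\ge c\lambda}$, i.e.\ sits at distance $\gtrsim\lambda\sim 1$ from its characteristic cone. The quantitative gain produced by this ``far from the cone'' localization is precisely what replaces the (too weak) null cancellation used in the companion near-cone lemmas, and the whole point will be to convert it into a small positive power of $\mu$ wherever such a power is demanded (namely in (\ref{equation40001})--(\ref{equation40003}), which then feed the summable High$\times$High estimates (\ref{desired3})--(\ref{desired4})), while for (\ref{equation40004}), (\ref{equation41000}), (\ref{equation40005}), (\ref{equation70000}) the output is at frequency $\lambda$, summability in $\mu$ comes for free from the $\ell^{1}$-Besov structure of the input norms, and no $\mu$-power is needed.

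For the output I would use the embedding $\langle\Omega\rangle^{-1}(X^{1/2,1}_{\lambda}+Y_{\lambda})\subset F_{\Omega,\lambda}$ (and $\langle\Omega\rangle^{-1}(X^{1/2,1}_{\lambda,\pm}+Y^{\pm}_{\lambda})\subset F^{\pm}_{\Omega,\lambda}$ for the Dirac target, using $L_{\pm}V_{\pm}=\mathrm{id}$). When the output is at frequency $\mu$ near its cone, since $\langle\Omega\rangle$ commutes with all Fourier multipliers and $\square V=\mathrm{id}$, this turns $\|S_{\mu}V\,T_{a,b}(f_{1},f_{2})\|_{F^{1/2}_{\Omega}}$ into $\mu^{-1/2}\|S_{\mu}\langle\Omega\rangle T_{a,b}(f_{1},f_{2})\|_{L^{1}_{t}L^{2}_{x}}$, the $\mu^{1/2}$ being the Besov weight and the $\mu^{-1}$ the normalization of $Y_{\mu}$; for the $\widetilde F_{\Omega}$-targeted estimates there is no such power and one is left with $\|S_{\mu}\langle\Omega\rangle T_{a,b}(f_{1},f_{2})\|_{L^{1}_{t}L^{2}_{x}}$. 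When instead the \emph{output} is the far-from-cone object (as in (\ref{equation41000}), (\ref{equation40005}), (\ref{equation70000})), I would use the $\langle\Omega\rangle^{-1}X^{1/2,1}_{\lambda}$ component together with the $(\lambda d)^{-1}$ decay of the symbol of $V$ (resp.\ $V_{\pm}$) off the cone, which again reduces matters to an $L^{2}_{t}L^{2}_{x}$- (or $L^{1}_{t}L^{2}_{x}$-) estimate of the product, now with a convergent $\sum_{d\gtrsim\lambda}$.

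The body of the argument is then the bilinear bound on the product. Whichever factor carries $S^{a}_{\lambda,\bullet\ge c\lambda}$ I would put into $L^{2}_{t}L^{2}_{x}$ by (\ref{equation3}) (or its $\pm$-analogue): $\|S^{a}_{1,\bullet\ge c}g\|_{L^{2}_{t}L^{2}_{x}}\le\sum_{d\ge c}d^{-1/2}\|\langle\Omega\rangle^{-1}g\|_{F_{\Omega,1}}\lesssim\|g\|_{\widetilde F_{\Omega}}$, the modulation sum being a harmless constant precisely because $d\gtrsim c\lambda$ -- this is the null-structure substitute. Since (\ref{equation3}) is insensitive to an extra $\langle\Omega\rangle$, I would route the angular weight onto this $L^{2}_{x}$-factor (or, in the High$\times$High$\to$Low terms, onto the low-frequency output, where it may be absorbed at the favorable angular scale dictated by the output frequency, cf.\ Lemma \ref{Angular}); the remaining factor then keeps its full $\tfrac12-$ worth of spare angular regularity. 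That remaining (near-cone, or low-frequency) factor I would estimate by an angularly improved Strichartz estimate with spatial exponent $p\in(5,6)$ -- obtained by summing the angular-frequency-localized estimate of Lemma \ref{twoscale}(ii) with the Sobolev and angular-concentration estimates \ref{Sobolev} and \ref{Angular}, exactly as in the bound for $Z_{\Omega,\lambda}$, and available because $\widetilde F_{\Omega}$ and $F^{1/2}_{\Omega}$ carry one full degree of angular smoothness while $L^{2}_{t}L^{p}_{x}$ for such $p$ costs strictly less than one degree (for the frequency-$\mu$ factor one uses instead (\ref{equation2})). Finally, Bernstein for the frequency-$\mu$ output localization, $\|S_{\mu}(f_{1}f_{2})\|_{L^{2}_{x}}\lesssim\mu^{3/p}\|f_{1}\|_{L^{2}_{x}}\|f_{2}\|_{L^{p}_{x}}$, supplies the factor $\mu^{3/p}$; combined with Hölder in $t$ and the $\mu^{-1/2}$ above this yields $\mu^{3/p-1/2}$ in (\ref{equation40001})--(\ref{equation40002}) and $\mu^{3/p}$ in (\ref{equation40003}), both positive since $p<6$. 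The $\tilde Q_{a,b}$ estimates run identically with $V_{\pm}$ (resp.\ $V_{a}$) and $Y^{\pm}_{\mu}$ in place of $V$ and $Y_{\mu}$, and the High$\times$High commutator correction, if it appears, is disposed of via (\ref{equation19}) separately.

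The hard part will be the angular-regularity bookkeeping: $\widetilde F_{\Omega}$ endows each input with exactly one degree of $\langle\Omega\rangle$-smoothness, and one must verify, in every term of the chain-rule expansion of $\langle\Omega\rangle\big(T_{a,b}(f_{1},f_{2})\big)$, that the weight can be parked on a factor (the $L^{2}_{x}$ one) -- or on the output shell -- able to absorb it, so that the factor carrying the $L^{2}_{t}L^{p}_{x}$ Strichartz estimate never needs more than the $\tfrac12-$ degrees it has to spare. Coupled with checking that each modulation sum $\sum_{d}d^{\pm1/2}$ truncated by the far-from-cone cutoff $d\gtrsim c\lambda$ actually converges -- which is the only place ``far from the cone'' is genuinely used -- this is essentially the entire content of the proof; the individual Hölder and Bernstein manipulations are routine, patterned on the computations (\ref{equation20006})--(\ref{equation20008}) and (\ref{equation31011}) above.
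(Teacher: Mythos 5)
Your overall architecture is the same as the paper's (rescale to $\lambda=1$; put the output in the $\langle\Omega\rangle^{-1}Y_{\mu}$ resp.\ $\langle\Omega\rangle^{-1}X^{1/2,1}$ component; use the convergent modulation sum $\sum_{d\gtrsim c}d^{-1/2}$ on the far-from-cone factor as the substitute for null structure; Strichartz plus Bernstein at the output frequency for the $\mu$-gain), but there is a genuine gap exactly at the point you yourself flag as ``bookkeeping''. After distributing $\langle\Omega\rangle$ over the product you cannot choose where the weight lands: in the High$\times$High$\to$Low estimates (\ref{equation40001})--(\ref{equation40003}) the term $T_{a,b}\big(S^{a}_{1,\bullet\geq c}\psi_{1},\langle\Omega\rangle\psi_{2}\big)$ is unavoidable, and neither of your two proposed outs handles it. ``Routing the weight onto the $L^{2}$-factor'' is not available (the rotation vector fields obey the Leibniz rule, so this term simply occurs), and ``absorbing it on the low-frequency output shell via Lemma \ref{Angular}'' is not a mechanism: $\langle\Omega\rangle$ is scale invariant, so the output being at frequency $\mu$ gives no smallness, and Lemma \ref{Angular} consumes angular regularity rather than supplying it. With your tools the best you can do for this term is $\|S^{a}_{1,\bullet\geq c}\psi_{1}\|_{L^{2}_{t}L^{4+}_{x}}\|\langle\Omega\rangle\psi_{2}\|_{L^{\infty}_{t}L^{2}_{x}}$, which only yields an $L^{2}_{t}$ (not $L^{1}_{t}$) bound on the product, so it cannot be fed into $Y_{\mu}$; and the output modulation has no lower bound here, so the $X^{1/2,1}_{\mu}$ route diverges. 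The paper's fix is a different ingredient that your proposal never invokes: the unweighted sector component $Z_{\Omega,1}$ of $F_{\Omega,1}$, i.e.\ the bound $\sup_{d}\,(d/\lambda)^{1.1/p}\,\big\|\sup_{\omega}\|S^{\omega,a}_{1,d}\psi_{1}\|_{L^{p}_{x}}\big\|_{L^{1}_{t}}\lesssim\|\psi_{1}\|_{F_{\Omega,1}}$, applied to the far factor \emph{without} the weight and paired with $\|S_{1}\langle\Omega\rangle\psi_{2}\|_{L^{\infty}_{t}L^{2}_{x}}$; this is precisely the term that produces the stated exponent $\mu^{3/p}$ (your $L^{2}_{t}L^{2}_{x}\times L^{2}_{t}L^{p}_{x}$ pairing only covers the other chain-rule term, which gives $\mu^{3/4-}$).

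A secondary omission: for the estimates in which the \emph{output} is the far-from-cone object, (\ref{equation41000}), (\ref{equation40005}), (\ref{equation70000}), bounding only the $\langle\Omega\rangle^{-1}X^{1/2,1}_{\lambda}$ component is not enough, since $F_{\Omega,\lambda}$ is an intersection; because the inputs there are a low-frequency factor and a near-cone factor, no $L^{1}_{t}L^{2}_{x}$ (hence no $Y_{\lambda}$, hence no free $Z_{\Omega,\lambda}$) bound is available, and one must estimate the $Z_{\Omega,\lambda}$ norm of the output separately, as the paper does in (\ref{equation40007}). These two points --- the sector-norm treatment of the weight-on-the-other-factor term, and the explicit $Z_{\Omega,\lambda}$ bound for far-from-cone outputs --- are the actual content of the lemma beyond the routine H\"older/Bernstein steps, and they are missing from your plan.
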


\begin{proof}

As the function spaces are scale invariant, it would be sufficient to prove above estimates under assumption $\lambda =1$. To prove (\ref{equation40001}), we have 
\[
\|V S_{\mu}Q_{a, b}(S^{a}_{1, \bullet \geq c} \psi_{1}, S^{}_{1} \psi_{2}) \|_{F^{1/2}_{\Omega}} \lesssim \mu^{-\frac{1}{2}+\frac{3}{4}-} \sum_{d \geq c } \| S^{a}_{1,d} \langle \Omega\rangle \psi_{1}\|_{L^{2}_{t}L^{2}_{x}} \| S^{}_{1}\psi_{2}\|_{L^{2}_{t}L^{4+}_{x}} +\]

\[
\mu^{-\frac{1}{2}+\frac{3}{p}} \sum_{ d \geq c} d^{- 1-\frac{2}{p}} \| \sup_{\omega }\| S^{\omega, a}_{1, d}\psi_{1}\|_{L^{p}_{x}}  \|_{L^{1}_{t}}  \| S^{}_{1}\langle \Omega \rangle \psi_{2}\|_{L^{\infty}_{t}L^{2}_{x}} 
\]

\begin{equation}\label{equation40006}
\lesssim \mu^{\frac{3}{p}-\frac{1}{2}} \|S_{1} \psi_{1}\|_{\widetilde{F}_{\Omega}} \| S_{1} \psi_{2}\|_{\widetilde{F}_{\Omega}}.
\end{equation}
The proof of (\ref{equation40002}) and (\ref{equation40003})  are very similar, we omit the detail here. To prove (\ref{equation40004}), notice that
\[
  \|  V_{a} S_{1} \tilde{Q}_{a,b} (S_{\mu} \phi, S_{1, \bullet \geq c }^{b} \psi)\|_{\widetilde{F}_{\Omega}} \lesssim  \mu^{\frac{3}{4}-}\sum_{ d \geq c } \| S_{\mu}\phi\|_{L^{2}_{t}L^{4+}_{x}} \| S^{b}_{1, d}\langle \Omega \rangle \psi\|_{L^{2}_{t}L^{2}_{x}} +
\]
\[
\sum_{ d\geq c} \| S_{\mu} \langle \Omega \rangle\phi\|_{L^{2+}_{t}L^{\infty-}_{x}} \| S^{b}_{1,d}\psi\|_{L^{2-}_{t}L^{2+}_{x}} \lesssim \mu^{\frac{1}{2}-} \|S_{\mu} \phi\|_{F^{1/2}_{\Omega}} \|S_{1} \psi \|_{\widetilde{F}_{\Omega}} 
\]

\[
+ \sum_{d \geq c } \mu^{\frac{1}{2}-} \| S_{\mu}\phi\|_{F^{1/2}_{\Omega}} \| S_{1,d}^{b} \psi\|_{L^{2}_{t}L^{2}_{x}}^{1-} \| S_{1,d}^{b}\psi\|_{L^{1}_{t}L^{\infty}_{x}}^{0+} \lesssim \mu^{\frac{1}{2}-} \| S_{\mu}\phi\|_{F^{1/2}_{\Omega}} \| S_{1}\psi\|_{\widetilde{F}_{\Omega}}.
\]
Hence (\ref{equation40004}) holds and the proof of (\ref{equation41000}) is very similar.
To prove (\ref{equation40005}), notice that
\[
\|V_{a} S^{a}_{1, \bullet \geq c} \tilde{Q}_{a, b}(S_{\mu} \phi, S^{b}_{1, \bullet < c } \psi)\|_{\widetilde{F}_{\Omega}} \lesssim \|  V_{a} S^{a}_{1, \bullet \geq c} \tilde{Q}_{a, b}(S_{\mu} \phi, S^{b}_{1, \bullet < c \lambda} \psi)\|_{\langle \Omega \rangle^{-1}X^{1/2,1}_{1} \cap Z_{\Omega, 1}},
\]
as 
\[
\|V_{a} S^{a}_{1, \bullet \geq c} \tilde{Q}_{a, b}(S_{\mu} \phi, S^{b}_{1, \bullet < c } \psi)\|_{\langle \Omega \rangle^{-1}X^{1/2,1}_{1}} \lesssim \sum_{d \geq c} d^{-\frac{1}{2}} \| S_{\mu} \langle \Omega \rangle\phi\|_{L^{\infty}_{t}L^{4-}_{x}} \| S^{b}_{1,\bullet < c}\psi\|_{L^{2}_{t}L^{4+}_{x}}+
\]

\begin{equation}\label{equation40008}
\sum_{ d \geq c } d^{-\frac{1}{2}} \| S_{\mu}\phi\|_{L^{2}_{t}L^{\infty}_{x}} \| S^{b}_{1, \bullet < c}\langle \Omega\rangle \psi\|_{L^{\infty}_{t}L^{2}_{x}} \lesssim \mu^{\frac{1}{4}-} \| S_{\mu}\phi\|_{F^{1/2}_{\Omega}} \| S_{1}\psi\|_{\widetilde{F}_{\Omega}},
\end{equation}

\[
\|V_{a} S^{a}_{1, \bullet \geq c} \tilde{Q}_{a, b}(S_{\mu} \phi, S^{b}_{1, \bullet < c } \psi)\|_{Z_{\Omega, 1}} \lesssim \sum_{ d \geq c} d^{-1+ \frac{1.1}{p}} \big\| \sup_{\omega} \| S_{1, d}^{\omega, a} \tilde{Q}_{a,b} (S_{\mu}\phi, S^{b}_{1, \bullet < c}\psi)\|_{L^{p}_{x}} \big\|_{L^{1}_{t}}
\]
 
 \begin{equation}\label{equation40007}
 \lesssim  \| S_{\mu}\phi\|_{L^{2}_{t}L^{\infty}_{x}} \| S^{b}_{1, \bullet < c}\psi\|_{L^{2}_{t}L^{4+}_{x}} \lesssim \mu^{\frac{1}{2}} \| S_{\mu} \phi\|_{{F}_{\Omega}^{1/2}} \| S_{1}\psi \|_{\widetilde{F}_{\Omega}}.
 \end{equation}
 Hence from (\ref{equation40008}) and (\ref{equation40007}), we could see that (\ref{equation40005}) holds. The proof of (\ref{equation70000}) is very similar.
\end{proof}

\subsection{High $\times$High: near the cone}
\begin{lemma}\label{highhighnear}
For $\psi_{1}, \psi_{2}\in \widetilde{F}_{\Omega}$ and $\phi\in F^{1/2}_{\Omega}$, $c_{1} \geq c_{2}+ 4 \geq 8$, we have the 
following estimates:
\begin{equation}\label{equation21010}
\| V S_{\mu}  Q_{a,b} (S_{\lambda,\bullet < c_{1}\mu}^{a} \psi_{1}, S^{b}_{\lambda, \bullet < c_{2} \mu} \psi_{2})  \|_{F^{1/2}_{\Omega}} 
\lesssim \mu^{\frac{1}{p}}\| S_{\lambda} \psi_{1} \|_{\widetilde{F}_{\Omega}} \| S_{\lambda} \psi_{2} \|_{\widetilde{F}_{\Omega}},
\end{equation}
\begin{equation}\label{equation21023}
 \|  V_{a} S_{\mu} \tilde{Q}_{a,b} (S_{\lambda, \bullet < c_{1}\mu}^{-b} \phi, S^{b}_{\lambda, \bullet < c_{2} \mu} \psi_{1})  \|_{\widetilde{F}_{\Omega}}
\lesssim \mu^{\frac{1}{2}} \|  S_{\lambda} \phi\|_{F^{1/2}_{\Omega}} \| S_{\lambda} \psi_{1} \|_{\widetilde{F}_{\Omega}}.\end{equation}
\end{lemma}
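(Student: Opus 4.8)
The plan is to follow the Sterbenz-type program: reduce to $\lambda = 1$ by scale invariance, place the output in the $\langle\Omega\rangle^{-1}Y$-component of the relevant function space (legitimate here because the output lies within distance $\sim\mu$ of the light cone, so the $X^{1/2,1}$, $Z_{\Omega}$ and $L^\infty_t L^2_x$ pieces are all dominated once we control the nonlinearity in $L^1_t L^2_x$), and then estimate the nonlinearity after extracting the null-structure gain through the angular decompositions of Section~\ref{Bilinear}. Concretely, for (\ref{equation21010}) I would first observe that $VS_\mu Q_{a,b}(\cdots)$ has space-time frequency supported within $O(\mu)$ of the cone, so by $\langle\Omega\rangle^{-1}Y_\mu \subseteq F_{\Omega,\mu}$ and the definition of $Y_\mu$ together with $\Box V = \mathrm{id}$,
\[
\|VS_\mu Q_{a,b}(S^a_{1,\bullet<c_1\mu}\psi_1, S^b_{1,\bullet<c_2\mu}\psi_2)\|_{F^{1/2}_\Omega} \lesssim \mu^{-1/2}\,\|\langle\Omega\rangle\, S_\mu Q_{a,b}(S^a_{1,\bullet<c_1\mu}\psi_1, S^b_{1,\bullet<c_2\mu}\psi_2)\|_{L^1_t L^2_x},
\]
and then distribute $\langle\Omega\rangle$ by the heuristic Leibniz rule onto one of the two inputs, the other carrying no extra angular weight. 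This reduces everything to a bilinear bound on $\|S_\mu Q_{a,b}(\cdots)\|_{L^1_t L^2_x}$ with target power $\mu^{1/p+1/2}$.

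Since in this regime $a\cdot b>0$ is forced by the symbol (\ref{equation3000}) and $\mu\ll1$, I would then split as in (\ref{equation30005})--(\ref{equation1008}) into the pieces $B^1_{\mathrm I}, B^1_{\mathrm{II}}, B^1_{\mathrm{III}}$ indexed by the output modulation $d\le\mu$, and apply the High$\times$High angular decomposition Lemma~\ref{highhighangular}. In each angular block the symbol of $Q_{a,b}$ supplies the null gain $(d/\mu)^{1/2}$ via Lemma~\ref{angle}; I then estimate the two factors in $L^2_t L^{4+}_x \times L^2_t L^{4+}_x$ (or in $L^2_t L^2_x \times L^2_t L^{\infty-}_x$ when a modulation is comparatively large) using the improved Strichartz embeddings (\ref{equation2}), (\ref{strichartzembedding}), the modulation bound (\ref{equation3}), and the $Z_{\Omega,\mu}$/$L^p_x$ bound; almost orthogonality and Cauchy--Schwarz over the angular sectors $\omega_1,\omega_2,\omega_3$ recover a power of $\mu/d$, and finally one sums the resulting series over the dyadic scale $d\le\mu$. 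For (\ref{equation21023}) the scheme is identical, except that the output goes into $\langle\Omega\rangle^{-1}Y^a_\mu$ — which has no extra $\mu^{-1}$ loss, explaining why the target is $\mu^{1/2}$ rather than something smaller — and the relevant angular decomposition is the $\tilde{Q}_{a,b}$-version of Lemma~\ref{highhighangular}, where the gained angle is between one input and the output ($|\omega_1 - ab\,\omega_3|\sim(d/\mu)^{1/2}$), which is what the symbol of $\tilde{Q}_{a,b}$ controls; since one input is the meson $\phi$ at regularity $1/2$, only one factor need enter a Strichartz space while the other sits in $L^\infty_t L^2_x$ or $L^2_t L^2_x$.

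The main obstacle is the final summation over $d$. A naive bound, using only the modulation estimate (\ref{equation3}) on one factor and $L^2_t L^{4+}_x$ Strichartz on the other, produces $\sum_{d\le\mu} d^{-1/2}\cdot(\text{stuff})$, which (since here the largest scale $d\sim\mu$ dominates) is far from summable; inserting the null gain $(d/\mu)^{1/2}$ brings this down to $\mu^{-1/2}\sum_{d\le\mu}1$, i.e.\ exactly the logarithmic divergence flagged in the introduction. The remedy is the extra angular regularity carried by the $\langle\Omega\rangle$ weight, which — through the angular-concentration Lemma~\ref{Angular}, the choice $p\in(5,6)$, and the weight $(d/\lambda)^{1.1/p}$ in the $Z_{\Omega,\lambda}$ norm — upgrades each dyadic block by an additional factor $(d/\mu)^{0+}$, rendering $\sum_{d\le\mu}(d/\mu)^{0+}$ convergent. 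Keeping honest track of these $\epsilon$'s, and verifying that the $\tilde{Q}_{a,b}$ angular decomposition genuinely delivers a usable $(d/\mu)^{1/2}$ even in the awkward configuration where the two Dirac inputs are far from parallel while each is near-parallel to the small output, is the delicate part of the argument.
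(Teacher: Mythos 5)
Your framework (reduction to $\lambda=1$, the modulation decomposition into $B^{1}_{\mathrm I},B^{1}_{\mathrm{II}},B^{1}_{\mathrm{III}}$, the angular decomposition of Lemma \ref{highhighangular}, Strichartz plus angular concentration) matches the paper's, but the proposal misses the one idea that actually makes the exponents come out, and as a result it does not prove the stated bounds. Your own bookkeeping for (\ref{equation21010}) ends at ``$\mu^{-1/2}\sum_{d\le\mu}(d/\mu)^{0+}$'', i.e.\ you only cure the logarithmic divergence in $d$; you never produce the positive power $\mu^{1/p}$ that the lemma asserts and that is indispensable for the summation over $\mu\lesssim\lambda$ in (\ref{desired3}). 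The paper gets it because in the High$\times$High regime with output at frequency $\mu$ and both inputs near their cones there are \emph{two} angle bounds available simultaneously: the angle between the two input frequencies is $\lesssim\mu$ (forced by $|\xi_1-\xi_2|\lesssim\mu$, $|\xi_i|\sim1$), while the decomposition gives $\lesssim(d/\mu)^{1/2}$. The null symbol is therefore bounded by $\min\{\mu,(d/\mu)^{1/2}\}\le(d/\mu)^{\alpha/2}\mu^{1-\alpha}$, and choosing $\alpha\in(0,1/5)$ (resp.\ noting $\mu\sim1$ when $a\cdot b<0$) is exactly what yields both $d$-summability and the net positive power of $\mu$ in (\ref{equation21012})--(\ref{equation21026}). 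Using only $(d/\mu)^{1/2}$, as you propose, corresponds to $\alpha=1$ and gives negative powers of $\mu$ in several blocks; using only $\mu$ destroys the $d$-sum. This interpolation is the heart of the proof of (\ref{equation21010}) and is absent from your argument. (Your wholesale reduction to the $\langle\Omega\rangle^{-1}Y_\mu$ component is also a deviation -- the paper runs the $B^{1}_{\mathrm I}$ piece through $\langle\Omega\rangle^{-1}X^{1/2,1}_\mu$ and $Z_{\Omega,\mu}$ -- but that is secondary: even granting it, the exponent count above still fails without the $\min\{\mu,(d/\mu)^{1/2}\}$ gain.)

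For (\ref{equation21023}) the gap is the same one you yourself flag as ``delicate'' and then leave unresolved: the symbol of $\tilde Q_{a,b}$ only controls the angle between the Dirac input and the output, which in High$\times$High need not be small, so the interpolation trick is unavailable ($\alpha=1$) and the naive count gives a negative power of $\mu$. The paper's rescue is a different mechanism: since the two \emph{inputs} are at angle $\lesssim\mu$, when $(d/\mu)^{1/2}\gg\mu$ one abandons the wide-angle sectors of size $(d/\mu)^{1/2}$ and re-decomposes into sectors of size $\mu$, accepting a $\log\mu$ loss because only $O(\log\mu)$ dyadic values of $d$ lie in that regime; combined with the fact that the Dirac output carries no $\mu^{1/2}$ regularity weight, all exponents stay positive and (\ref{equation21023}) follows with the factor $\mu^{1/2}$. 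Your proposal contains neither this two-regime sector-size argument nor any substitute for it, so the configuration you correctly identify as the hard one is left unhandled.
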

\begin{proof}
As the function spaces are scale invariant, it would be sufficient to prove above estimates under assumption $\lambda =1$. Let's prove (\ref{equation21010}) first, as
\[
 S_{\mu}  Q_{a,b} (S_{1,\bullet < c_{1}\mu}^{a} \psi_{1}, S^{b}_{1, \bullet < c_{2} \mu} \psi_{2}) = \mathcal{I}_{1} + \mathcal{I}_{2} +\mathcal{I}_{3},
\]
where

\[
\mathcal{I}_{1} \,=\, \sum_{d\leq \mu} {S}_{\mu, d} Q_{a, b} (S_{1, \bullet < d}^{a} \psi_{1} \, ,\,S_{1, \bullet < d}^{b} \psi_{2}),\mathcal{I}_{2} = \sum_{ d < c_{2} \mu} {S}_{\mu, \bullet \leq \min\{d, \mu\}} Q_{a, b} (S_{1, \bullet \leq d}^{a} \psi_{1}\,,\, S_{1, d}^{b} \psi_{2})
\]
\[
\mathcal{I}_{3} = \sum_{ d < c_{1} \mu} {S}_{\mu, \bullet \leq \min\{d, \mu\}} Q_{a, b}( S_{1, d}^{a} \psi_{1}\, ,\, S_{1, \bullet < \min\{d, c_{2}\mu\}}^{b} \psi_{2}).
\]
Recall the wide angle decomposition that we did in the section \ref{Bilinear}, hence we have

\[
\mu^{\frac{1}{2}}\| V \mathcal{I}_{1}\|_{\langle \Omega\rangle^{-1}X^{1/2, 1}_{\mu}} \lesssim \mathcal{I}_{1,1} + \mathcal{I}_{1,2}:= \sum_{d\lesssim \mu}  \mu^{-\frac{1}{2}} d^{-\frac{1}{2}} \|{S}_{\mu, d} Q_{a, b} (S_{\lambda, \bullet < d}^{a} \psi_{1} ,
S_{\lambda, \bullet < d}^{b} \langle \Omega \rangle \psi_{2})\|_{L^{2}_{t} L^{2}_{x}}\]
\[
+  \sum_{d\lesssim \mu} \mu^{-\frac{1}{2}} d^{-\frac{1}{2}} \|{S}_{\mu, d} Q_{a, b} (S_{\lambda, \bullet < d}^{a}  \langle \Omega \rangle  \psi_{1} \, ,\,S_{\lambda, \bullet < d}^{b} \psi_{2})\|_{L^{2}_{t} L^{2}_{x}}
\]

\[
\mathcal{I}_{1,1} \lesssim \sum_{d:\,\,d \lesssim \mu} d^{-\frac{1}{2}} \mu^{-\frac{1}{2}}  \Big\|    \sum_{\scriptsize \begin{array}{c}
|\omega_{1} \mp a\cdot\omega_{2}| \sim (\frac{d}{\mu})^{\frac{1}{2}}\\
|\omega_{2} - ab\cdot \omega_{3}| \sim (\frac{d}{\mu})^{\frac{1}{2}}\\
\end{array}}  S^{\omega_1,\pm}_{\mu, d} Q_{a,b} \big( B^{\omega_2}_{(\frac{d}{\mu})^{\frac{1}{2}}} S^{a}_{1,\bullet<d}\psi_{1}, B^{\omega_3}_{(\frac{d}{\mu})^{\frac{1}{2}}} S^{b}_{1, \bullet < d} \langle \Omega \rangle  \psi_{2}\big)      \Big\|_{L^{2}_{t}L^{2}_{x}}\]

\[
\lesssim   \sum_{ d \lesssim  \mu }(d\mu)^{-\frac{1}{2}} 
\Big\| \Big( \sum_{\omega_1,\omega_2,\omega_3} \big\| S^{\omega_1,\pm}_{\mu, d} Q_{a,b} \big( B^{\omega_2}_{(\frac{d}{\mu})^{\frac{1}{2}}} S^{a}_{1, \bullet < d}  \psi_{1}, 
 B^{\omega_3}_{(\frac{d}{\mu})^{\frac{1}{2}}} S^{b}_{1, \bullet <d} \langle \Omega\rangle \psi_{2}\big)\big\|_{L^{2}_{x}}^{2} \Big)^{\frac{1}{2}}        \Big\|_{L^{2}_{t}}  \]

\[
 \lesssim \sum_{ d \lesssim  \mu} d^{-\frac{1}{2}}\mu^{-\alpha+\frac{5}{4}-}\Big(\frac{d}{\mu}\Big)^{\frac{\alpha}{2}+\frac{1}{4}-} \Big\| \sup_{\omega_{2}} \| B^{\omega_2}_{(\frac{d}{\mu})^{\frac{1}{2}}} S^{a}_{1,  <d} \psi_{1}\|_{L^{4+}_{x}} \big( \sum_{\omega_{3}} \|B^{\omega_3}_{(\frac{d}{\mu})^{\frac{1}{2}}} S^{b}_{1, \bullet < d} \langle \Omega\rangle\psi_{2}\|_{L^{2}_{x}}^{2} \Big)^{\frac{1}{2}}   \Big\|_{L^{2}_{t}}
\]

\[
\lesssim \sum_{ d \lesssim  \mu} 
 d^{-\frac{1}{2}}\mu^{-\alpha+\frac{5}{4}-}\Big(\frac{d}{\mu}\Big)^{\frac{\alpha}{2}+\frac{1}{4}-} \sup_{\omega_{2}} \| B^{\omega_2}_{(\frac{d}{\mu})^{\frac{1}{2}}} S^{a}_{1, \bullet < d} \psi_{1}\|_{L^{2}_{t}L^{4+}_{x}} \|S_{1} \langle \Omega \rangle\psi_{2}\|_{L^{\infty}_{t}L^{2}_{x}}
\]
\begin{equation}\label{equation21012}
\lesssim  \sum_{ d \lesssim  \mu} 
 d^{-\frac{1}{2}}\mu^{-\alpha+\frac{5}{4}-}\Big(\frac{d}{\mu}\Big)^{\frac{\alpha}{2}+\frac{1}{2}-} \| S_{1}\psi_{1}\|_{\widetilde{F}_{\Omega}} \| S_{1}\psi_{2}\|_{\widetilde{F}_{\Omega}} \lesssim \mu^{-\alpha + \frac{3}{4}-}  \| S_{1}\psi_{1}\|_{\widetilde{F}_{\Omega}} \| S_{1}\psi_{2}\|_{\widetilde{F}_{\Omega}}.
\end{equation}
By symmetry, we can switch the role of $\psi_{1}$ and $\psi_{2}$ and  estimate $\mathcal{I}_{1,2}$ in the same way. Hence
we have
\begin{equation}\label{equation51030}
\| V \mathcal{I}_{1}\|_{\langle \Omega\rangle^{-1}X^{1/2, 1}_{\mu}}  \lesssim \mu^{-\alpha+\frac{3}{4}-}  \| S_{1}\psi_{1}\|_{\widetilde{F}_{\Omega}} \| S_{1}\psi_{2}\|_{\widetilde{F}_{\Omega}}.
\end{equation}

\begin{equation}
\mu^{\frac{1}{2}}\| V \mathcal{I}_{1}\|_{Z_{\Omega, \mu}} \lesssim \mu^{-1+\frac{3}{p}} \sum_{d\leq \mu} d^{-1} \Big(\frac{d}{\mu}\Big)^{\frac{1.1}{p}} \int \sup_{\omega} \| S_{\mu, d}^{\omega} \mathcal{I}_{1}\|_{L^{p}_{x}} \, d\, t
\end{equation}

\[
\lesssim  \mu^{-\alpha+\frac{3}{p}} \sum_{d\leq \mu} d^{-1} \Big(\frac{d}{\mu}\Big)^{\frac{\alpha}{2}+\frac{1.1}{p}} \mu^{3(-\frac{1}{p}+\frac{1}{2}-)} \Big(\frac{d}{\mu}\Big)^{(-\frac{1}{p}+\frac{1}{2}-)} \| B^{\omega_2}_{(\frac{d}{\mu})^{\frac{1}{2}}} S^{a}_{1,\bullet< d} \psi_{1}\|_{L^{2}_{t}L^{4+}_{x}}\| B^{\omega_3}_{(\frac{d}{\mu})^{\frac{1}{2}}} S^{b}_{1, \bullet < d} \psi_{2}\|_{L^{2}_{t}L^{4+}_{x}}\]

\begin{equation}\label{equation21025}
 \lesssim \mu^{-\alpha+\frac{3}{p}} \sum_{d\leq \mu} d^{-1} \Big(\frac{d}{\mu}\Big)^{\frac{\alpha}{2}+1} \mu^{3(-\frac{1}{p}+\frac{1}{2}-)} \| S_{1}\psi_{1}\|_{\widetilde{F}_{\Omega}} \| S_{1} \psi_{2}\|_{\widetilde{F}_{\Omega}}\lesssim \mu^{-\alpha+ \frac{1}{2}-} \| S_{1}\psi_{1}\|_{\widetilde{F}_{\Omega}} \| S_{1} \psi_{2}\|_{\widetilde{F}_{\Omega}}.
\end{equation}

\noindent Here, if $a\cdot b >0$, then we choose $\alpha\in(0,1/5)$. In this case, we can utilize the fact that the gained angle is less than both $\mu$ and $({d}/{\mu})^{{1}/{2}}$, hence we could use the upper bound $({d}/{\mu})^{{\alpha}/{2}}\mu^{1-\alpha}$. While if $a\cdot b < 0$, we have to let $\alpha=1$, however in this case $\mu\sim1$. Hence after combining the results of (\ref{equation51030}) and (\ref{equation21025}), we have the following estimate for any pair of sign $a$ and $b$ :
\begin{equation}\label{equation51040}
\| V\mathcal{I}_{1}\|_{F^{1/2}_{\Omega}} \lesssim \mu^{\frac{1}{4}}   \| S_{1}\psi_{1}\|_{\widetilde{F}_{\Omega}} \| S_{1}\psi_{2}\|_{\widetilde{F}_{\Omega}}.
\end{equation}
Let's proceed to estimate term $\mathcal{I}_{2}$, we have

\[
\| V \mathcal{I}_{2}\|_{F^{1/2}_{\Omega}} \lesssim
\mathcal{I}_{2,1} + \mathcal{I}_{2,2} := \sum_{d < c_{2} \mu} \mu^{-\frac{1}{2}} \big\|{S}_{\mu, \bullet \leq \min\{d, \mu\}} Q_{a, b} (S_{1, \bullet \leq d}^{a} \psi_{1}\,,\, S_{1, d}^{b}  \langle \Omega \rangle  \psi_{2})\big\|_{L^{1}_{t}L^{2}_{x}}
\]

\[ +  
\sum_{d < c_{2} \mu} \mu^{-\frac{1}{2}} \big\|{S}_{\mu, \bullet \leq \min\{d, \mu\}} Q_{a, b} (S_{1, \bullet \leq d}^{a} \langle \Omega \rangle \psi_{1}\,,\, S_{1, d}^{b} \psi_{2})\big\|_{L^{1}_{t}L^{2}_{x}} .
\]

\[
\mathcal{I}_{2,1} \lesssim \sum_{d < c_{2} \mu} \mu^{-\frac{1}{2}}  \Big\|    \sum_{\scriptsize \begin{array}{c}
|\omega_{1} \mp a\cdot\omega_{2}| \sim (\frac{d}{\mu})^{\frac{1}{2}}\\
|\omega_{2} - ab\cdot \omega_{3}| \sim (\frac{d}{\mu})^{\frac{1}{2}}\\
\end{array}}  S^{\omega_1,\pm}_{\mu, \bullet \leq \min\{d, \mu\}} Q_{a,b} \big( B^{\omega_2}_{(\frac{d}{\mu})^{\frac{1}{2}}} S^{a}_{1,\bullet \leq d} \psi_{1}, B^{\omega_3}_{(\frac{d}{\mu})^{\frac{1}{2}}} S^{b}_{1,  d} \langle \Omega\rangle \psi_{2}\big)      \Big\|_{L^{1}_{t}L^{2}_{x}}
\]

\[
\lesssim  \sum_{d < c_{2} \mu} \mu^{-\frac{1}{2}} \Big\|  \Big( \sum_{\omega_{1}, \omega_{2},\omega_{3}} \big\| S^{\omega_1,\pm}_{\mu, \bullet \leq \min\{d,\mu\}} Q_{a,b} \big( B^{\omega_2}_{(\frac{d}{\mu})^{\frac{1}{2}}} S^{a}_{1, \bullet \leq d} \psi_{1}, B^{\omega_3}_{(\frac{d}{\mu})^{\frac{1}{2}}} S^{b}_{1, d} \langle \Omega\rangle  \psi_{2}\big)\big\|_{L^{2}_{x}}^{2} \Big)^{\frac{1}{2}}        \Big\|_{L^{1}_{t}} 
\]

\[
\lesssim \sum_{d < c_{2} \mu} \mu^{\frac{1}{2}-\alpha+\frac{3}{4}-}\Big(\frac{d}{\mu}\Big)^{\frac{\alpha}{2}+\frac{1}{4}-} \Big\| \sup_{\omega_{2}} \| B^{\omega_2}_{(\frac{d}{\mu})^{\frac{1}{2}}} S^{a}_{1, \bullet \leq d} \psi_{1}\|_{L^{4+}_{x}} (\sum_{\omega_{3}} \| B^{\omega_3}_{(\frac{d}{\mu})^{\frac{1}{2}}} S^{b}_{1, d}  \langle \Omega\rangle  \psi_{2}\|_{L^{2}_{x}}^{2})^{\frac{1}{2}}\Big\|_{L^{1}_{t}}
\]

\[
\lesssim \sum_{d < c_{2} \mu} \mu^{\frac{1}{2}-\alpha+\frac{3}{4}-}\Big(\frac{d}{\mu}\Big)^{\frac{\alpha}{2}+\frac{1}{2}-} \| S_{1}\psi_{1}\|_{\widetilde{F}_{\Omega}} \| S_{1,d}^{b}\langle \Omega\rangle \psi_{2}\|_{L^{2}_{t}L^{2}_{x}} \]
\begin{equation}\label{equation21020}
\lesssim \sum_{d < c_{2} \mu} \mu^{\frac{1}{2}-\alpha+\frac{3}{4}-}\Big(\frac{d}{\mu}\Big)^{\frac{\alpha}{2}+\frac{1}{2}-} d^{-\frac{1}{2}}  \| S_{1}\psi_{1}\|_{\widetilde{F}_{\Omega}} \| S_{1} \psi_{2}\|_{\widetilde{F}_{\Omega}} \lesssim \mu^{\frac{1}{2}}\| S_{1}\psi_{1}\|_{\widetilde{F}_{\Omega}} \| S_{1} \psi_{2}\|_{\widetilde{F}_{\Omega}},
\end{equation}
as before, here we choose $\alpha\in (0,1/5)$ when $a \cdot b > 0$ and $\alpha = 1$, $\mu\sim 1$ when $a\cdot b < 0$. 
\[
\mathcal{I}_{2,2} \lesssim \sum_{d < c_{2} \mu} \mu^{-\frac{1}{2}}  \Big\|    \sum_{\scriptsize \begin{array}{c}
|\omega_{1} \mp a\cdot\omega_{2}| \sim (\frac{d}{\mu})^{\frac{1}{2}}\\
|\omega_{2} - ab\cdot \omega_{3}| \sim (\frac{d}{\mu})^{\frac{1}{2}}\\
\end{array}}  S^{\omega_1,\pm}_{\mu, \bullet \leq \min\{d,\mu\}} Q_{a,b} \big( B^{\omega_2}_{(\frac{d}{\mu})^{\frac{1}{2}}} S^{a}_{1,\bullet \leq d} \langle \Omega\rangle \psi_{1}, B^{\omega_3}_{(\frac{d}{\mu})^{\frac{1}{2}}} S^{b}_{1,  d} \psi_{2}\big)      \Big\|_{L^{1}_{t}L^{2}_{x}}
\]

\[
\lesssim \sum_{d \leq c_{2}\mu}
\mu^{\frac{1}{2}-\alpha+ \frac{3}{p}} \Big(\frac{d}{\mu}\Big)^{\frac{\alpha}{2} + \frac{1}{p}} \mu^{-\frac{1}{2}}   d^{-\frac{1.1}{p}} \| S_{1}\langle \Omega\rangle \psi_{1}\|_{L^{\infty}_{t}L^{2}_{x}} \| S_{1} \psi_{2}\|_{Z_{\Omega, 1}}
\]
\begin{equation}\label{equation21026}
\lesssim \mu^{\frac{1.9}{p}-\alpha} \| S_{1}\psi_{1}\|_{\widetilde{F}_{\Omega}} \| S_{1}\psi_{2}\|_{\widetilde{F}_{\Omega}}.
\end{equation}
In the proof of (\ref{equation21026}), $\alpha$ is chosen inside $(0, 0.4/p)$,
hence from (\ref{equation21020}) and (\ref{equation21026}), we have
\begin{equation}\label{equation21021}
\| V\mathcal{I}_{2}\|_{F^{1/2}_{\Omega}} \lesssim  \mu^{\frac{1}{p}} \| S_{1}\psi_{1}\|_{\widetilde{F}_{\Omega}} \| S_{1}\psi_{2}\|_{\widetilde{F}_{\Omega}}.
\end{equation}
By symmetry, the term $\mathcal{I}_{3}$ can be estimated in the same way as $\mathcal{I}_{2}$, hence from (\ref{equation51040}) and (\ref{equation21021}), we could see that (\ref{equation21010}) holds.

The proof of (\ref{equation21023}) follows from the same strategy used in the proof of (\ref{equation21023}), however, as the null structure of $\tilde{Q}_{a,b}$ only guarantee  us to gain $(d/\mu)^{1/2}$ but not $\mu$ any more, hence in above estimates $\alpha$ has to be $1$. As a consequence, one might see that the power of $\mu$ in (\ref{equation21025}) and (\ref{equation21026}) is negative, hence will cause problem in the summation with respect to $\mu$. However, as we are in the  High$\times$High interaction case, the angle between the spatial frequencies of two inputs should less than $\mu$ when both of inputs are very close to the cone. The angular sector decomposition we did in the wide angle decomposition will be very large when modulation $d$ close to $\mu$, i.e when $(d/\mu)^{1/2} \gg \mu$. Hence it suggests us to do the following:
\begin{enumerate}
\item[(i)] when $(d/\mu)^{1/2} \lesssim \mu$, we still use the wide angle decomposition.\\

\item[(ii)] When $\mu \lesssim (d/\mu)^{1/2} \lesssim 1$, we use a smaller sector decomposition, which is to use sector with angle $\mu$ instead of $(d/\mu)^{1/2}$. Notice that here we  only have $\log\mu$ cases to consider, hence we don't worry about the summation problem with respect to $d$ but with the price of loss $\log \mu$.
\end{enumerate}
Another thing to notice is that we won't loss $\mu^{1/2}$ from the regularity in (\ref{equation21023}), i.e we gain $\mu^{1/2}$ in the upper bounds of types (\ref{equation21025}) and (\ref{equation21026}). In the case (i), the upper bound of type (\ref{equation21025}) would be  $\mu^{ 1-}$ and the upper bound of type (\ref{equation21026}) would be $ \mu^{\frac{1.7}{p}+\frac{1}{2}} $; in the case (ii), the upper bound of type (\ref{equation21025}) would be $\mu^{  1-} \log\mu $, in the proof of type (\ref{equation21026}), the major difference would be the number of sectors putted in $Z_{\Omega, \lambda}$ space and it will change from $\mu^{-1/2}$ to $\mu d^{-1/2}$, hence the new upper bound would be $\mu^{\frac{1.7}{p} + \frac{1}{2}}\log\mu$.
Therefore, we can see that for all cases, the exponents of $\mu$ remains positive, hence (\ref{equation21023}) holds.
\end{proof}

\subsection{Low$\times$ High and High $\times$Low:  near the cone}

\begin{lemma}[Low$\times$ High and High $\times$ Low: near the cone]\label{lowhighnear}
For $\psi_{1}, \psi_{2}\in \widetilde{F}_{\Omega}$ and $\phi\in F^{1/2}_{\Omega}$, we have the 
following estimates:

\begin{equation}\label{equation55007}
\| V   S_{\lambda, \bullet < c_{1}\mu}^{-b} Q_{a,b} (S_{\mu} \psi_{1}, S_{\lambda, \bullet < c_{2} \mu}^{b} \psi_{2})\|_{F^{1/2}_{\Omega}}
 \lesssim
 \| S_{\mu}\psi_{1}\|_{\widetilde{F}_{\Omega}} \| S_{\lambda} \psi_{2}\|_{\widetilde{F}_{\Omega}},
 \end{equation}

\begin{equation}\label{equation55009}
\| V_{a}   S_{\lambda, \bullet < c_{1}\mu}^{a} \tilde{Q}_{a,b} (S_{\lambda, \bullet < c_{2} \mu}^{a} \phi, S_{\mu} \psi_{1}) \big]\|_{\widetilde{F}_{\Omega}}
 \lesssim
\| S_{\lambda}\phi\|_{F^{1/2}_{\Omega}} \| S_{\mu} \psi_{1}\|_{\widetilde{F}_{\Omega}},
\end{equation}

\begin{equation}\label{equation21000}
\| V_{a}   S_{\lambda, \bullet < c_{1}\mu}^{a} \tilde{Q}_{a,b} (S_{\mu} \phi, S_{\lambda, \bullet < c_{2} \mu}^{b} \psi_{1}) \big]\|_{\widetilde{F}_{\Omega}}
 \lesssim
 \| S_{\mu}\phi\|_{F^{1/2}_{\Omega}} \| S_{\lambda} \psi_{1}\|_{\widetilde{F}_{\Omega}}.
  \end{equation}
\end{lemma}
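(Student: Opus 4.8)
The plan is to mirror the template of Lemmas~\ref{highhighfar} and~\ref{highhighnear}. First I would use scale invariance to normalise $\lambda=1$. Each of the three left-hand sides is precisely one of the ``diagonal, near-the-cone'' pieces isolated in Section~\ref{Bilinear}: the quantity in~(\ref{equation55007}) is the term $\tilde B_1$ of~(\ref{equation60000}); the quantity in~(\ref{equation21000}) is the term $\tilde B_2$ of~(\ref{equation71001}) in the case $a\cdot b>0$ (when $a\cdot b<0$ a direct frequency-support computation forces $|\xi_2|\sim\mu$, hence $\mu\sim1$, and the multiplier is then bounded, so the estimate is trivial; cf.\ the decomposition~(\ref{equation30001})); and the quantity in~(\ref{equation55009}) is the last summand of~(\ref{equation71002}). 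I would then split each of these according to~(\ref{equation1105}) into its three sub-pieces $\widetilde{B_{\textrm{I}}^{i}}$, $\widetilde{B_{\textrm{II}}^{i}}$, $\widetilde{B_{\textrm{III}}^{i}}$ indexed by the largest modulation $d\lesssim\mu$, and apply the wide-angle decomposition of Lemma~\ref{lowhigh} to each, turning every summand into a trilinear expression in which the output, the high input and the low input are all localised to sectors of angular width $(d/\mu)^{1/2}$.

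For a single such localised trilinear term I would estimate exactly as in~(\ref{equation21012})--(\ref{equation21026}): the $\langle\Omega\rangle$-weighted high-frequency input goes into $L^2_tL^2_x$ and is controlled, together with the $d^{1/2}$ gain, by~(\ref{equation3}) (or its $\pm$-analogue); the remaining input goes into $L^2_tL^{4+}_x$ via the improved Strichartz embedding~(\ref{equation2}); and the output is measured in $\langle\Omega\rangle^{-1}X^{1/2,1}_{1}\cap Z_{\Omega,1}$, or in $\langle\Omega\rangle^{-1}Y_1$, whichever component of the relevant $F_{\Omega,1}$-norm one is bounding. Summing the angular pieces in $\ell^2$ by almost-orthogonality and counting sectors with Lemma~\ref{Sobolev} reduces the trilinear bound to a geometric series in $d$. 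The null-form symbol contributes the factor $(d/\mu)^{1/2}$ (see the Remark following Lemma~\ref{lowhigh}, where $|\omega_2-a b\cdot\omega_3|\sim(d/\mu)^{1/2}$ for $Q_{a,b}$ and $|\omega_1-a b\cdot\omega_3|\sim(d/\mu)^{1/2}$ for $\tilde Q_{a,b}$); in the Low~$\times$~High regime one has moreover that the angle between the output and the high input is $\lesssim\mu$, as in~(\ref{equation21001}), which for~(\ref{equation55007}) (with $a\cdot b>0$) provides an extra $\mu$ and closes that estimate with room to spare. The High~$\times$~Low case of $Q_{a,b}$ follows by exchanging the two inputs, as noted after Lemma~\ref{lowhigh}, and the $\tilde Q_{a,b}$ symbol is then handled in the same manner.

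The main obstacle is the borderline bookkeeping in the two $\tilde Q_{a,b}$ estimates~(\ref{equation55009}) and~(\ref{equation21000}). After relabelling for the High~$\times$~Low case, the null structure of $\tilde Q_{a,b}$ produces only the gain $(d/\mu)^{1/2}$ and \emph{no} additional power of $\mu$, so the naive sector count leaves the exponent of $\mu$ at zero (or slightly negative) and the $d$-sum logarithmically divergent, exactly the difficulty encountered for~(\ref{equation21023}) inside the proof of Lemma~\ref{highhighnear}. I would resolve it in the same way: keep the wide-angle sectors of width $(d/\mu)^{1/2}$ while $(d/\mu)^{1/2}\lesssim\mu$, but replace them by the finer sectors of width $\mu$ once $\mu\lesssim(d/\mu)^{1/2}\lesssim1$, paying only a harmless $\log\mu$ coming from the $O(\log\mu)$ dyadic values of $d$ in that range. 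Combined with the fact that in~(\ref{equation55009}) and~(\ref{equation21000}) the $\mu^{1/2}$ derivative weight is carried by the $\phi$-input (which lies in $F^{1/2}_{\Omega}$) and not by the output, every resulting exponent of $\mu$ is $\ge0$ and every $d$-series converges. Summing the three sub-pieces and undoing the scaling then yields~(\ref{equation55007}),~(\ref{equation55009}) and~(\ref{equation21000}).
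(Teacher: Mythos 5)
Your overall architecture coincides with the paper's: rescale to $\lambda=1$, split according to the largest modulation as in~(\ref{equation1105}), apply the wide-angle decomposition of Lemma~\ref{lowhigh}, estimate with the improved Strichartz embedding~(\ref{equation2}), the $X^{1/2,1}$ bound~(\ref{equation3}) and angular concentration, and invoke a finer sector decomposition of width $\mu$ when $(d/\mu)^{1/2}\gtrsim\mu$. However, there is a genuine gap in your treatment of~(\ref{equation55007}). You claim that, since in the Low~$\times$~High regime the angle between the output and the high input is $\lesssim\mu$, the null structure of $Q_{a,b}$ ``provides an extra $\mu$'' which closes~(\ref{equation55007}) with room to spare. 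This is not available: by~(\ref{equation3000}) the symbol of $Q_{a,b}$ is $\beta\Pi_{-b}(\eta)\Pi_{a}(\xi+\eta)$, i.e.\ it is small only in terms of the angle between the \emph{two input} frequencies, and in the Low~$\times$~High case the low input at frequency $\mu$ may point in an arbitrary direction relative to the high input. The observation that $\angle(\mathrm{output},\mathrm{high\ Dirac\ input})\lesssim\mu$ yields a symbol gain only for $\tilde Q_{a,b}$ (whose symbol is $\beta\Pi_{-a}(\xi)\Pi_b(\eta)$), which is precisely how the paper exploits it for~(\ref{equation21000}), not for~(\ref{equation55007}).

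Without that spurious gain, the borderline piece of~(\ref{equation55007}) in which $\langle\Omega\rangle$ falls on the low-frequency factor while the high input carries modulation $d$ (the term $\widetilde{B_{\textrm{III}}^{1}}$, i.e.\ the paper's $\mathcal{J}_{3,2}$) does not close under the diagonal wide-angle counting you describe: putting $S_{\mu}\langle\Omega\rangle\psi_{1}$ in $L^{\infty}_{t}L^{2}_{x}$ and $S^{b}_{1,d}\psi_{2}$ into the $Z_{\Omega,1}$ component produces a summand of size about $\mu^{\frac{3}{p}}\big(\frac{d}{\mu}\big)^{\frac12+\frac1p}\mu^{-\frac12}d^{-\frac{1.1}{p}}$, and summing $d$ up to $c_{2}\mu$ gives $\mu^{\frac{1.9}{p}-\frac12}$, a negative power of $\mu$ for $p\in(5,6)$, hence no uniform bound as $\mu\to 0$. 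The paper closes exactly this piece with the device you reserve for~(\ref{equation55009}) and~(\ref{equation21000}): for $\mu^{3}\leq d<c_{2}\mu$ it re-decomposes the output and the high input into sectors of width $\mu$ (keeping width $(d/\mu)^{1/2}$ for the low input), accounts for the now non-diagonal sector sum (about $d^{1/2}\mu^{-3/2}$ sectors $\omega_{1},\omega_{3}$ per $\omega_{2}$) via a duality argument with an $L^{1}$-kernel bilinear operator, and obtains $\mu^{\frac12+\frac{1.7}{p}-}$ as in~(\ref{equation54000}). Your proposal is repaired by applying your own fine-sector case to~(\ref{equation55007}) as well; as written, that estimate rests on a null-structure gain that $Q_{a,b}$ does not provide.
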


\begin{proof}
As the function spaces are scale invariant, it would be sufficient to prove above estimates under assumption $\lambda =1$. Let's do the following decomposition:

\[
S_{1, \bullet < c_{1}\mu}^{-b} Q_{a,b} (S_{\mu} \psi_{1}, S_{1, \bullet < c_{2} \mu}^{b} \psi_{2})
 = \mathcal{J}_{1} + \mathcal{J}_{2} + \mathcal{J}_{3},
\]
where

\[
\mathcal{J}_{1} =  \sum_{ d\leq \mu} S^{-b}_{1,\bullet \leq d} {Q}_{a,b} (S_{\mu, d}^{a} \psi_{1}, S_{1, \bullet \leq d}^{b} \psi_{2}), \mathcal{J}_{2} = \sum_{d < c_{1}\mu} S^{-b}_{1, d} {Q}_{a,b} (S_{\mu,\bullet \leq  \min\{d, \mu\}}^{a} \psi_{1}, S_{1, \bullet \leq \min\{d, c_{2}\mu\}}^{b} \psi_{2})
, \]
\[
 \mathcal{J}_{3} =\sum_{d < c_{2} \mu}
S^{-b}_{1, \bullet \leq d} {Q}_{a,b} (S_{\mu,\bullet \leq  \min\{d,\mu\}}^{a} \psi_{1}, S_{1, d}^{b} \psi_{2}).
\]

\[
\| V \mathcal{J}_{1}\|_{F^{1/2}_{\Omega}} \lesssim  \mathcal{J}_{1,1} + \mathcal{J}_{1,2}:= \sum_{ d\leq \mu} \|S^{-b}_{1,\bullet \leq d} {Q}_{a,b} (S_{\mu, d}^{a} \langle \Omega \rangle \psi_{1}, S_{1, \bullet \leq d}^{b} \psi_{2})\|_{L^{1}_{t}L^{2}_{x}}
\]

\[
+ \sum_{ d\leq \mu} \|S^{-b}_{1,\bullet \leq d} {Q}_{a,b} (S_{\mu, d}^{a} \psi_{1}, S_{1, \bullet \leq d}^{b}  \langle \Omega \rangle  \psi_{2})\|_{L^{1}_{t}L^{2}_{x}}.
\]

\[
\mathcal{J}_{1,1} \lesssim \sum_{d \leq \mu} \Big\| \sum_{\tiny \begin{array}{c}
|\omega_{1} + \omega_{3}| \sim (\frac{d}{\mu})^{\frac{1}{2}}\\
|\omega_{2} - ab \cdot \omega_{3}| \sim (\frac{d}{\mu})^{\frac{1}{2}}\\
\end{array}
}\normalsize B^{\omega_{1}}_{(\frac{d}{\mu})^{\frac{1}{2}}} S^{-b}_{1, \bullet < d} Q_{a, b} (S^{\omega_{2}, a}_{\mu, d}\langle \Omega\rangle \psi_{1}, B^{\omega_{3}}_{(\frac{d}{\mu})^{1/2}} S^{b}_{1, \bullet < d} \psi_{2}) \Big\|_{L^{1}_{t}L^{2}_{x}}
\]

\[
\lesssim \sum_{d \leq \mu} \Big\| \big( \sum_{\omega_{1}, \omega_{2}, \omega_{3}} \|  B^{\omega_{1}}_{(\frac{d}{\mu})^{\frac{1}{2}}} S^{-b}_{1, \bullet < d} Q_{a, b} (S^{\omega_{2}, a}_{\mu, d} \langle \Omega\rangle \psi_{1}, B^{\omega_{3}}_{(\frac{d}{\mu})^{1/2}} S^{b}_{1, \bullet < d} \psi_{2}) \|_{L^{2}}^{2}\big)^{\frac{1}{2}} \Big\|_{L^{1}_{t}}
\]

\[
\lesssim \sum_{d \leq \mu} \Big(\frac{d}{\mu}\Big)^{\frac{1}{2}} 
\Big\| \big(\sum_{ \omega_{2}, \omega_{3}} \| B^{\omega_{3}}_{(\frac{d}{\mu})^{1/2}} S^{b}_{1, \bullet < d} \psi_{2}\|_{L^{4+}_{x}}^{2}  \|S^{\omega_{2}, a}_{\mu, d}\langle \Omega\rangle \psi_{1}\|_{L^{4-}}^{2} \big)^{1/2} \Big\|_{L^{1}_{t}}
\]

\begin{equation}\label{equation210410}
\lesssim \sum_{d \leq \mu} \Big(\frac{d}{\mu}\Big)^{1-} \mu^{\frac{3}{4}-} d^{-\frac{1}{2}} \| S_{1} \psi_{2}\|_{\widetilde{F}_{\Omega}} \| S_{\mu} \psi_{1}\|_{\widetilde{F}_{\Omega}}\lesssim \mu^{\frac{1}{4}-} \| S_{1} \psi_{2}\|_{\widetilde{F}_{\Omega}} \| S_{\mu} \psi_{1}\|_{\widetilde{F}_{\Omega}}.
\end{equation}

\[
\mathcal{J}_{1,2} \lesssim \sum_{d\lesssim \mu} \Big(\frac{d}{\mu}\Big)^{\frac{1}{2}} \sup_{\omega} \| S^{\omega,a}_{\mu, d} \psi_{1}\|_{L^{1}_{t}L^{\infty}_{x}} \| S^{b}_{1, \bullet < d} \langle \Omega\rangle \psi_{2}\|_{L^{\infty}_{t}L^{2}_{x}}
\]

\begin{equation}\label{equation21041}
\lesssim  \sum_{d\lesssim \mu} \Big(\frac{d}{\mu}\Big)^{\frac{1}{2}-\frac{0.1}{p}} \mu^{\frac{1}{2}}\| S_{\mu}\psi_{1}\|_{\widetilde{F}_{\Omega}} \| S_{1} \psi_{2}\|_{\widetilde{F}_{\Omega}}\lesssim \mu^{1/2} \| S_{\mu}\psi_{1}\|_{\widetilde{F}_{\Omega}} \| S_{1}\psi_{2}\|_{\widetilde{F}_{\Omega}}.
\end{equation}

\[
\| V\mathcal{J}_{2}\|_{\langle \Omega\rangle^{-1}X^{1/2,1}_{1}} \lesssim \mathcal{J}_{2,1}+ \mathcal{J}_{2,2}:= \sum_{ d < c_{1}\mu} d^{-\frac{1}{2}} \|  S^{-b}_{1, d} {Q}_{a,b} (S_{\mu,\bullet \leq  \min\{d, \mu\}}^{a} \psi_{1}, S_{1, \bullet \leq \min\{d, c_{2}\mu\}}^{b} \langle\Omega\rangle\psi_{2})\|_{L^{2}_{t}L^{2}_{x}}
\]
\[
+ \sum_{ d < c_{1}\mu} d^{-\frac{1}{2}} \|  S^{-b}_{1, d} {Q}_{a,b} (S_{\mu,\bullet \leq  \min\{d,\mu\}}^{a}\langle\Omega\rangle \psi_{1}, S_{1, \bullet \leq \min\{d, c_{2}\mu\}}^{b} \psi_{2})\|_{L^{2}_{t}L^{2}_{x}}
\]

\[
\mathcal{J}_{2,1} \lesssim \sum_{ d < c_{1}\mu} d^{-\frac{1}{2}} \big\| \sum_{\tiny \begin{array}{c}
|\omega_{1} + \omega_{3}| \sim (\frac{d}{\mu})^{\frac{1}{2}}\\
|\omega_{2} - ab \omega_{3}| \sim (\frac{d}{\mu})^{\frac{1}{2}}\\
\end{array}
}\normalsize B^{\omega_{1}}_{(\frac{d}{\mu})^{\frac{1}{2}}} S^{-b}_{1, d} Q_{a, b} \big(S^{\omega_{2}, a}_{\mu, \bullet \leq \min\{d,\mu\}}\psi_{1}, B^{\omega_{3}}_{(\frac{d}{\mu})^{1/2}} S^{b}_{1, \bullet < \min\{ c_{2} \mu, d\}}\langle \Omega\rangle  \psi_{2}\big) \big\|_{L^{2,2}_{t,x}}\]
\[
\lesssim  \sum_{ d < c_{1}\mu} d^{-\frac{1}{2}}  \Big(\frac{d}{\mu}\Big)^{\frac{1}{2}} \sup_{\omega} \| S^{\omega,a}_{\mu, \bullet \leq \min\{d,\mu\}} \psi_{1}\|_{L^{2}_{t}L^{\infty}_{x}} \| S^{b}_{1, \bullet < \min\{ c_{2} \mu, d\}}\langle \Omega\rangle  \psi_{2}\|_{L^{\infty}_{t}L^{2}_{x}}
\]
\begin{equation}\label{equation55000}
\lesssim  \sum_{ d < c_{1}\mu} d^{-\frac{1}{2}}  \Big(\frac{d}{\mu}\Big)^{1-} \mu \| S_{\mu} \psi_{1}\|_{\widetilde{F} _{\Omega}} \| S_{1} \psi_{2}\|_{\widetilde{F}_{\Omega}} \lesssim \mu^{\frac{1}{2}} \| S_{\mu}\psi_{1}\|_{\widetilde{F} _{\Omega}} \| S_{1} \psi_{2}\|_{\widetilde{F}_{\Omega}},
\end{equation}

\[
\mathcal{J}_{2,2}\lesssim   \sum_{ d < c_{1}\mu} d^{-\frac{1}{2}}  \Big(\frac{d}{\mu}\Big)^{\frac{1}{2}} \big\| \big( \sum_{\omega_{2}, \omega_{3}} \| B^{\omega_{2}}_{(\frac{d}{\mu})^{1/2}} S^{a}_{\mu, \bullet \leq \min\{d,\mu\}}\langle \Omega\rangle \psi_{1}\|_{L^{4-}_{x}}^{2} \| B^{\omega_{3}}_{(\frac{d}{\mu})^{1/2}} S^{b}_{1, \bullet < \min\{ c_{2} \mu, d\}} \psi_{2})\|_{L^{4+}_{x}}^{2} \big)^{1/2} \big\|_{L^{2}_{t}}
\]

\begin{equation}\label{equation55001}
\lesssim  \sum_{ d < c_{1}\mu} d^{-\frac{1}{2}}  \Big(\frac{d}{\mu}\Big)^{1-} \mu^{\frac{3}{4}-}  \| S^{a}_{\mu, \bullet \leq \min\{d,\mu\}} \langle \Omega\rangle \psi_{1}\|_{L^{\infty}_{t}L^{2}_{x}} \| S_{1}\psi_{2} \| _{\widetilde{F}_{\Omega}} \lesssim \mu^{\frac{1}{4}-} \| S_{\mu}\psi_{1}\|_{\widetilde{F}_{\Omega}} \| S_{1}\psi_{2}\|_{\widetilde{F}_{\Omega}}.
\end{equation}

\vspace{1\baselineskip}
\[
\| V \mathcal{J}_{2}\|_{Z_{\Omega, 1}} \lesssim 
\sum_{d < c_{1}\mu}  d^{-1+\frac{1.1}{p}} \int \sup_{\omega} \| S_{1,d}^{\omega} {Q}_{a,b} (B^{\omega_{2}}_{(\frac{d}{\mu})^{1/2}} S_{\mu,\bullet \leq  \min\{d,\mu\}}^{a} \psi_{1}, B^{\omega_{3}}_{(\frac{d}{\mu})^{1/2}}  S_{1, \bullet \leq \min\{d, c_{2}\mu\}}^{b} \psi_{2})\|_{L^{p}_{x}} \, d \, t
\]
\begin{equation}\label{equation55002}
\lesssim \sum_{d < c_{1}\mu} d^{-1+\frac{1.1}{p}} \mu \Big( \frac{d}{\mu}\Big)^{-\frac{1}{p}+ \frac{3}{2}-} \|  S_{\mu}\psi_{1}\|_{\widetilde{F}_{\Omega}} \|  S_{1} \psi_{2}\|_{\widetilde{F}_{\Omega}}
\lesssim \mu^{\frac{1}{p}} \| S_{\mu}\psi_{1}\|_{\widetilde{F}_{\Omega}} \| S_{1}\psi_{2}\|_{\widetilde{F}_{\Omega}}.
\end{equation}

\vspace{1\baselineskip}

\[
\| V\mathcal{J}_{3}\|_{F^{1/2}} \lesssim \mathcal{J}_{3,1}+ \mathcal{J}_{3,2}:= \sum_{ d < c_{2}\mu} \|  S^{-b}_{1, \bullet \leq d} {Q}_{a,b} (S_{\mu,\bullet \leq   \min\{d, \mu\}}^{a} \psi_{1}, S_{1, d}^{b} \langle \Omega \rangle \psi_{2})\|_{L^{1}_{t}L^{2}_{x}}\]

\[
+  \sum_{ d < c_{2}\mu} \|  S^{-b}_{1, \bullet \leq d} {Q}_{a,b} (S_{\mu,\bullet \leq   \min\{d, \mu\}}^{a} \langle \Omega \rangle \psi_{1}, S_{1, d}^{b}  \psi_{2})\|_{L^{1}_{t}L^{2}_{x}},
\]

\[
\mathcal{J}_{3,1}\lesssim \sum_{ d < c_{2}\mu} \big\| \sum_{\tiny \begin{array}{c}
|\omega_{1} + \omega_{3}| \sim (\frac{d}{\mu})^{\frac{1}{2}}\\
|\omega_{2} - ab \cdot \omega_{3}| \sim (\frac{d}{\mu})^{\frac{1}{2}}\\
\end{array}
}\normalsize B^{\omega_{1}}_{(\frac{d}{\mu})^{\frac{1}{2}}} S^{-b}_{1, \bullet \leq d} Q_{a, b} ( S^{\omega_{2},a}_{\mu, \bullet \leq  \min\{d, \mu\}} \psi_{1}, B^{\omega_{3}}_{(\frac{d}{\mu})^{1/2}} S^{b}_{1, d}\langle \Omega \rangle \psi_{2}) \big\|_{L^{1}_{t}L^{2}_{x}}
\]

\[
\lesssim  \sum_{ d < c_{2}\mu} \mu^{\frac{3}{4}-} \Big(\frac{d}{\mu}\Big)^{\frac{3}{4}-} \sup_{\omega_{2}} \| B^{\omega_{2}}_{(\frac{d}{\mu})^{1/2}} S_{\mu, \leq  \min\{d, \mu\}}^{a} \psi_{1}\|_{L^{2}_{t}L^{4+}_{x}} \| S_{1, d}^{b}\langle \Omega \rangle \psi_{2} \|_{L^{2}_{t}L^{2}_{x}}
\]

\begin{equation}\label{equation55003}
 \lesssim \sum_{d < c_{2}\mu}  \Big(\frac{d}{\mu}\Big)^{1-} d^{-\frac{1}{2}} \mu \|S_{\mu} \psi_{1}\|_{\widetilde{F}_{\Omega}} \| S_{1} \psi_{2} \|_{\widetilde{F}_{\Omega}} \lesssim \mu^{\frac{1}{2}}  \|S_{\mu} \psi_{1}\|_{\widetilde{F}_{\Omega}} \| S_{1} \psi_{2} \|_{\widetilde{F}_{\Omega}}.
\end{equation}

\[
\mathcal{J}_{3,2} \lesssim \sum_{d \lesssim \mu^{3}} 
\mu^{\frac{3}{p}} \Big(\frac{d}{\mu}\Big)^{\frac{1}{2}+\frac{1}{p}} \mu^{-\frac{1}{2}} d^{-\frac{1.1}{p}} \| S_{\mu}\langle \Omega \rangle\psi_{1} \|_{L^{\infty}_{t}L^{2}_{x}} \| S_{1}\psi_{2}\|_{Z_{\Omega, 1}} +  \sum_{\mu^{3} \leq d < c_{2}\mu} \mathcal{J}_{3,2}^{d}
\]

\begin{equation}\label{equation55004}
 \lesssim \mu^{\frac{1}{2}+\frac{1.7}{p}} \| S_{\mu}\psi_{1} \|_{\widetilde{F}} \| S_{1} \psi_{2}\|_{\widetilde{F}_{\Omega}} +   \sum_{\mu^{3} \leq d < c_{2}\mu} \mathcal{J}_{3,2}^{d},
\end{equation}
where
\[
\mathcal{J}_{3,2}^{d} =  \big\| \sum_{\tiny \begin{array}{c}
|\omega_{1} + \omega_{3}| \sim \mu\\
|\omega_{2} - ab \cdot \omega_{3}| \sim (\frac{d}{\mu})^{\frac{1}{2}}\\
\end{array}
}\normalsize B^{\omega_{1}}_{\mu} S^{-b}_{1, \bullet \leq d} Q_{a, b} (B^{\omega_{2}}_{(\frac{d}{\mu})^{1/2}} S^{a}_{\mu, \bullet \leq  \min\{d, \mu\}}\langle \Omega\rangle \psi_{1}, B^{\omega_{3}}_{\mu} S^{b}_{1, d} \psi_{2}) \big\|_{L^{1}_{t}L^{2}_{x}},
\]
here we utilize the information that the in the Low $\times$ High interaction case, the angle between the high input $\psi_{2}$ and the output shouldn't be too big and less than $\mu$. Hence for the output and $\psi_{2}$, we can decompose and localize them into a smaller sector with size $\mu$ and localize $\psi_{1}$ into  a bigger sector with size $(d/\mu)^{1/2}$, notice that the summation with respect to $\omega_{1}, \omega_{2}, \omega_{3}$ now is not in diagonal, for each fixed sector $\omega_{2}$, correspondingly, there are  $d^{1/2}\mu^{-3/2}$  sectors $\omega_{1}$ and $\omega_{3}$.
For fixed time $t$ and modulation $d$, we have 
\[
\| \sum_{\tiny \begin{array}{c}
|\omega_{1} + \omega_{3}| \sim \mu\\
|\omega_{2} - ab \cdot \omega_{3}| \sim (\frac{d}{\mu})^{\frac{1}{2}}\\
\end{array}
}\normalsize B^{\omega_{1}}_{\mu} S^{-b}_{1, \bullet \leq d} Q_{a, b} (B^{\omega_{2}}_{(\frac{d}{\mu})^{1/2}} S^{a}_{\mu, \bullet \leq \min\{d,\mu\}} \langle \Omega \rangle\psi_{1}, B^{\omega_{3}}_{\mu} S^{b}_{1, d} \psi_{2})
\|_{L^{2}_{x}} \lesssim  \sup_{h: \| h(x)\|_{L^{2}_{x}}=1}   \]
\[
\Big( \frac{d}{\mu}\Big)^{\frac{1}{2}}  \sum_{\omega_{2}} \|  S^{\omega_{2}, a}_{\mu, \bullet \leq \min\{d,\mu\}} \langle \Omega\rangle \psi_{1}\|_{L^{2}_{x}}  \| \sum_{\tiny \begin{array}{c}
|\omega_{1} + \omega_{3}| \sim \mu\\
|\omega_{2} - ab \cdot \omega_{3}| \sim (\frac{d}{\mu})^{\frac{1}{2}}\\
\end{array}}
\normalsize 
B^{\omega_{2}}_{(\frac{d}{\mu})^{\frac{1}{2}}} P_{\bullet \lesssim \mu} T(B^{\omega_{1}}_{\mu}P_{\bullet \lesssim 1} h,  B^{\omega_{3}}_{\mu}S^{b}_{1,d}\psi_{2})\|_{L^{2}_{x}}
\]

\[
\lesssim\Big( \frac{d}{\mu}\Big)^{\frac{1}{2}+\frac{1}{p}} \mu^{\frac{3}{p}}  \|  S^{a}_{\mu, \bullet \leq d} \langle \Omega\rangle \psi_{1}\|_{L^{2}_{x}} \sup_{\omega_{3}} \| B^{\omega_{3}}_{\mu}  S^{b}_{1,d}\psi_{2}\|_{L^{p}_{x}},
\]
here $T$ is some bilinear operator with $L^{1}$ kernel, hence
\[
\sum_{\mu^{3} \leq d < c_{2}\mu} \mathcal{J}_{3,2}^{d} \lesssim  \sum_{\mu^{3} \leq d < c_{2}\mu}   \Big( \frac{d}{\mu}\Big)^{\frac{1}{2}+\frac{1}{p}}  \mu^{1+\frac{3}{p}} d^{-\frac{1}{2}-\frac{1.1}{p}} \|  S^{a}_{\mu, \bullet \leq d} \langle \Omega \rangle \psi_{1}\|_{L^{\infty}_{t}L^{2}_{x}} \| S_{1} \psi_{2}\|_{Z_{\Omega, 1}} 
\]
\begin{equation}\label{equation54000}
\lesssim \mu^{\frac{1}{2}+\frac{1.7}{p}-} \| S_{\mu}\psi_{1}\|_{\widetilde{F}_{\Omega}} \| S_{1}\psi_{2} \|_{\widetilde{F}_{\Omega}}.
\end{equation}
Combine the results: (\ref{equation210410}), (\ref{equation21041}), (\ref{equation55000}), (\ref{equation55001}), (\ref{equation55002}), (\ref{equation55003}), (\ref{equation55004}) and (\ref{equation54000}), we could see the desired estimate (\ref{equation55007}) holds. The proof of desired estimates (\ref{equation55009}) and (\ref{equation21000}) are very similar. Notice that for the (\ref{equation21000}), as we will loss $\mu^{1/2}$ due to the regularity of $\phi$, we have to gain more, one important observation is that as now $\phi$ is at low frequency, hence the angle between the output and the Dirac part should be less than $\mu$ and it's exactly the symbol of null structure of $\tilde{Q}_{a,b}$ when $a\cdot b > 0$, hence we can play the trick of weight between $\mu$ and $(d/\mu)^{1/2}$ like we did before, and when $a\cdot b < 0$, we know that the type of term as the left hand side of  (\ref{equation55007}) is used when $\mu\sim 1$ hence loss $\mu$ is not a issue.
 Here we omit the detail and only give a upper bound associated with (\ref{equation21000}), which is $\mu^{{1}/{p}}\| S_{\mu}\phi\|_{F^{1/2}_{\Omega}} \| S_{\lambda} \psi_{2}\|_{\widetilde{F}_{\Omega}}.
$
\end{proof}

\subsection{Commutator terms arise from High $\times$ High interaction}

\begin{lemma}\label{commutator}
For $\psi_{1}, \psi_{2}\in \widetilde{F}_{\Omega}$ and $\phi\in F^{1/2}_{\Omega}$, we have the 
following estimates:
\begin{equation}\label{equation22002}
\sum_{\sigma : \mu < \sigma \leq 1} \| W(P_{\mu} S_{\sigma, \sigma} V\,Q_{a,b} (S_{1} \psi_{1}, S_{1} \psi_{2}) ) \|_{F^{1/2}_{\Omega}}  \lesssim \mu^{\frac{3}{p}-\frac{1}{2}} \| S_{1} \psi_{1} \|_{\widetilde{F}_{\Omega}} \| S_{1} \psi_{2} \|_{\widetilde{F}_{\Omega}},
  \end{equation}

\begin{equation}\label{equation22001}
\sum_{\sigma : \mu < \sigma \leq 1} W(P_{\mu} S_{\sigma, \sigma} V_{a} \tilde{Q}_{a,b} (S_{1} \phi, S_{1} \psi_{1}) )\|_{\widetilde{F}_{\Omega}} \lesssim \mu^{\frac{1}{2}} \| S_{1}\phi\|_{F^{1/2}_{\Omega}} \| S_{1}\psi_{1}\|_{\widetilde{F}_{\Omega}}.
\end{equation}
\end{lemma}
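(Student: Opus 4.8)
The plan is to treat the commutator terms $W(P_\mu S_{\sigma,\sigma} V\, T_{a,b}(S_1\psi_1, S_1\psi_2))$ as ordinary linear homogeneous wave solutions whose initial data is built from the $P_\mu S_{\sigma,\sigma}$-localized piece of $V\,T_{a,b}(\cdots)$, and then to estimate that data in $\dot B^{0,1}_{2,1}\times\dot B^{-1/2,1}_{2,1}$, which suffices because $F_{\Omega,\mu}$ contains the homogeneous wave evolution (via the $S_\mu(L^\infty_t L^2_x)$ component) with the scale-invariant weight. The key point is that $S_{\sigma,\sigma}$ forces the modulation of the output to be comparable to its frequency-in-annulus scale $\sigma$, with $\mu < \sigma \le 1$; since $P_\mu$ then localizes the \emph{spatial} frequency to size $\mu$, while the space-time frequency of $S_{\sigma,\sigma}$ has size $\sigma\gg\mu$, the relevant output is genuinely far from the cone (distance $\sim\sigma$). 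So $V$ acting there behaves like $\Box^{-1}$ at modulation $\sigma$, i.e. contributes a factor $\sigma^{-1}$, and one reduces to bounding
\[
\sum_{\mu<\sigma\le 1}\ \mu^{1/2}\,\sigma^{-1}\ \| P_\mu S_{\sigma,\sigma}\, T_{a,b}(S_1\psi_1,S_1\psi_2)\|_{L^2_t L^2_x}
\]
for the $Q_{a,b}$ case (and the analogue without the $\mu^{1/2}$ loss, but with a $\sigma^{-1}$ from $V_a$, for the $\tilde Q_{a,b}$ case). The $\mu^{1/2}$ prefactor is the $\dot B^{0,1}_{2,1}\to\dot B^{-1/2,1}_{2,1}$ weight at frequency $\mu$ together with the $Z_{\Omega,\mu}$/$\langle\Omega\rangle^{-1}X^{1/2,1}_\mu$ bookkeeping; as in Lemma \ref{highhighfar}, one also needs the $\langle\Omega\rangle$ weight distributed onto one of the two inputs.

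Next I would estimate the bilinear $L^2_tL^2_x$ norm by Hölder in the form $L^4_tL^{4+}_x\times L^4_tL^{4-}_x$, or $L^2_tL^{4+}_x\times L^\infty_t L^2_x$ after using the modulation-$\sigma$ localization on one factor. Using the improved Strichartz embedding \eqref{equation2}, $\|S_1 f\|_{L^2_tL^{4+}_x}\lesssim \|\langle\Omega\rangle^{-(1/2-)}f\|_{F_{\Omega,1}}$, on the high-modulation-tolerant factor, and the $X^{1/2,1}$-type bound \eqref{equation3}, $d^{1/2}\|S_{1,d}g\|_{L^2_tL^2_x}\le\|\langle\Omega\rangle^{-1}g\|_{F_{\Omega,1}}$ on the factor one has localized to modulation $\gtrsim\sigma$ (the difference of two input modulations must be $\gtrsim\sigma$ since the output modulation is $\sim\sigma$ while spatial frequency is $\mu\ll\sigma$, so at least one input has modulation $\gtrsim\sigma$), I get a gain of $\sigma^{1/2}\cdot\mu^{3(1/2-1/p)}$ or so from the small spatial-frequency localization $P_\mu$ acting after the bilinear form — this is the Bernstein/Sobolev factor for squeezing the output into a $\mu$-ball. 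The net budget is $\mu^{1/2}\sigma^{-1}\cdot\sigma^{1/2}\cdot(\text{Sobolev gain in }\mu)$, and summing the resulting geometric series in $\sigma$ over $\mu<\sigma\le1$ leaves a positive power of $\mu$; a careful tally gives the stated $\mu^{3/p-1/2}$ in \eqref{equation22002} and $\mu^{1/2}$ in \eqref{equation22001} (the $\tilde Q$ case being better because no $\mu^{1/2}$ derivative loss occurs, exactly as in the remark after \eqref{equation20007}).

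The main obstacle I expect is \emph{not} an algebraic cancellation but the careful interplay of three localizations — $P_\mu$ (small spatial frequency), $S_{\sigma,\sigma}$ (output far from cone at scale $\sigma$), and the two input modulations — to justify that at least one input sits at modulation $\gtrsim\sigma$ so that \eqref{equation3} is applicable with a useful power, and then to make the double sum over $\sigma$ and over that input's dyadic modulation converge with room to spare. A secondary technical point is that one must route the $\langle\Omega\rangle$ angular weight through the bilinear form exactly as in Lemmas \ref{highhighfar}–\ref{lowhighfar} (splitting $\langle\Omega\rangle$ onto $\psi_1$ or $\psi_2$ by the heuristic chain rule), and verify that the $Z_{\Omega,\mu}$ component of $F_{\Omega,\mu}$ — which governs the far-from-cone $L^p_x$ behavior — is controlled by the same $L^2_tL^2_x$ bilinear estimate after Sobolev embedding, so that no separate argument for $Z_{\Omega,\mu}$ is needed beyond what \eqref{equation2} and \eqref{equation3} already give. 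Since the underlying bilinear input estimates are identical to those already used in Lemmas \ref{highhighfar} and \ref{far}, once the localization geometry is set up the computation is routine and I would omit the details, mirroring the presentation in \cite{Sterbenz2}.
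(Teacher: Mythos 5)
Your treatment of \eqref{equation22001} is essentially the paper's proof: a crude bound using the Bernstein gain $\mu^{3/2}$ from $P_{\mu}$, the loss $\sigma^{-1}$ from the symbol of $V_{a}$ on the support of $S_{\sigma,\sigma}$, and summation over $\mu<\sigma\leq 1$, giving $\mu^{1/2}$. For \eqref{equation22002}, however, there is a genuine gap, and it is preceded by a bookkeeping error that matters: on the support of $S_{\sigma,\sigma}$ the wave parametrix $V=\Box^{-1}$ has symbol of size $\big(||\tau|-|\xi||\,(|\tau|+|\xi|)\big)^{-1}\sim\sigma^{-2}$, not $\sigma^{-1}$ (only the Dirac parametrix gives $\sigma^{-1}$). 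This extra $\sigma^{-1}$ is precisely why the ``rough estimate'' fails for $Q_{a,b}$ — the paper says so at the start of its proof — and why your geometric series in $\sigma$ does not close as written.

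More importantly, your key structural claim — that at least one input must carry modulation $\gtrsim\sigma$ — is not true uniformly, and where it fails is exactly the dangerous regime. If $a\cdot b>0$ it holds only when $\sigma\gg\mu$: for the dyadic scales $\sigma$ comparable to $\mu$, two inputs lying on their (same-sign) half-cones with $|\xi_{1}|,|\xi_{2}|\sim 1$ and $|\xi_{1}-\xi_{2}|\sim\mu$ produce an output with $|\xi|\sim\mu$ and modulation $\sim\mu\sim\sigma$, so both inputs can be arbitrarily close to the cone. In that regime the only gains your scheme provides are Bernstein/Strichartz factors, and after the correct $\sigma^{-2}$ loss the tally gives at best $\mu^{0}$ (in fact a negative power), which is insufficient for \eqref{equation22002} and for the subsequent summation over $\mu$ in \eqref{desired3}. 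The paper closes this case with the null structure and a sign dichotomy that your plan never invokes: since the output spatial frequency is $\leq\mu$ while the inputs have frequency $\sim 1$, the angle between the input frequencies is $\lesssim\mu$, so for $a\cdot b>0$ the symbol of $Q_{a,b}$ contributes an extra factor $\mu$ (the term $\mathcal{H}_{2}$ in the paper), while for $a\cdot b<0$ and $\sigma\leq c$ the two inputs cannot simultaneously be close to their cones, forcing one input modulation to be bounded below by a constant. Your proposal explicitly asserts that no algebraic cancellation is needed and does not distinguish $a\cdot b>0$ from $a\cdot b<0$, so this part of the argument — the part that actually makes \eqref{equation22002} true with a positive power of $\mu$ — is missing rather than merely routine.
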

\begin{proof}
The  proof of (\ref{equation22001}) is very straightforward, as we can gain $\mu^{3/2}$ from the Sobolev embedding, while only loss $\mu$ comes from the modulation hence it's easy to see (\ref{equation22001}) holds. More precisely 
\[
\sum_{\sigma : \mu < \sigma \leq 1} \| W(P_{\mu} S_{\sigma, \sigma} V_{a} \tilde{Q}_{a,b} (S_{1} \phi, S_{1} \psi_{1}) )\|_{\widetilde{F}_{\Omega}}  \lesssim \sum_{\sigma : \mu < \sigma \leq 1} \| \langle \Omega \rangle(P_{\mu} S_{\sigma, \sigma} V_{a} \tilde{Q}_{a,b} (S_{1} \phi, S_{1} \psi_{1}) )\|_{L^{\infty}_{t}L^{2}_{x}}
\]
\[
\lesssim \sum_{\sigma : \mu < \sigma \leq 1} \mu^{3/2} \sigma^{-1} \| S_{1}\langle \Omega \rangle \phi\|_{L^{\infty}_{t} L^{2}_{x}} \| S_{1}\langle \Omega \rangle\psi_{1}\|_{L^{\infty}_{t}L^{2}_{x}} \lesssim \mu^{\frac{1}{2}} \| S_{1}\phi\|_{F^{1/2}_{\Omega}} \| S_{1}\psi_{1}\|_{\widetilde{F}_{\Omega}}.
\]

While to prove (\ref{equation22002}), if we stick to above rough estimate, we will loss $\mu^{-3/2}$ from summation in modulation while gain $\mu^{3/2}$ from the Sobolev embedding, hence we will have problem in the summation part with respect to $\mu$. However, one thing to notice is that as the output spatial frequency is less than $\mu$, when the space-time frequencies of two inputs are both close to the cone, then the angle between the spatial frequencies of two inputs should less than $\mu$. More precisely,  for fixed modulation $\sigma$,  when $a\cdot  b > 0$, we have 
\begin{equation}\label{equation22062}
P_{\mu} S_{\sigma, \sigma}Q_{a,b} (S_{1} \psi_{1}, S_{1} \psi_{2}) = \mathcal{H}_{1} + \mathcal{H}_{2} + \mathcal{H}_{3} + \mathcal{H}_{4}
\end{equation}
\[
\quad\mathcal{H}_{1} = P_{\mu} S_{\sigma, \sigma} Q_{a,b} (S_{1,\bullet \geq 4 c}^{a} \psi_{1}, S_{1, \bullet < 2 c}^{b} \psi_{2}),\quad \mathcal{H}_{2} = P_{\mu} S_{\sigma, \sigma} Q_{a,b} (S_{1,\bullet < 4 c}^{a} \psi_{1}, S_{1, \bullet < 2 c}^{b} \psi_{2})\]

\[
\mathcal{H}_{3} = P_{\mu} S_{\sigma, \sigma} Q_{a,b} (S_{1, \bullet \geq c }^{a} \psi_{1}, S_{1, \bullet \geq 2 c}^{b} \psi_{2}), \quad \mathcal{H}_{4} = P_{\mu} S_{\sigma, \sigma} Q_{a,b} (S_{1, \bullet \leq c }^{a} \psi_{1}, S_{1, \bullet \geq 2 c}^{b} \psi_{2})
\]

For term $\mathcal{H}_{2}$, we could gain $\mu$ from the null structure as $a\cdot b > 0$ and the angle between two inputs is less than $\mu$, hence

\[
\| W( V \mathcal{H}_{2})\|_{F^{1/2}_{\Omega}} \lesssim \sum_{\sigma : \mu < \sigma \leq 1} \mu^{\frac{5}{2}} \sigma^{-2+ \frac{1}{2}} 
\| (S_{1,\bullet < 4 c} \langle \Omega \rangle \psi_{1})\|_{L^{\infty}_{t}L^{2}_{x}} \| (S_{1,\bullet < 2 c} \langle \Omega \rangle  \psi_{2})\|_{L^{\infty}_{t}L^{2}_{x}}
\]
\begin{equation}\label{equation22063}
\lesssim \mu \| S_{1}\psi_{1}\|_{\widetilde{F}_{\Omega}} \| S_{1}\psi_{2}\|_{\widetilde{F}_{\Omega}},
\end{equation}

\[
\| W( V \mathcal{H}_{3})\|_{F^{1/2}_{\Omega}} \lesssim \sum_{\sigma : \mu < \sigma \leq 1} \sigma^{-\frac{1}{2}} \| \langle \Omega \rangle \mathcal{H}_{3} \|_{L^{1}_{t}L^{2}_{x}}  \lesssim \sum_{\sigma_{2}\geq 2 c}\mu^{-\frac{1}{2}+\frac{3}{4}-}
\| S_{1, \bullet 
\geq c}\psi_{1}\|_{L^{2}_{t}L^{4+}_{x}} \| S_{1, \sigma_{2}}\langle \Omega \rangle \psi_{2}\|_{L^{2}_{t}L^{2}_{x}}
\]
\begin{equation}\label{equation22060}
+ \sum_{\sigma_{1}\geq  c}\mu^{-\frac{1}{2}+\frac{3}{4}-}
\| S_{1, \sigma_{1}} \langle \Omega\rangle \psi_{1}\|_{L^{2}_{t}L^{2}_{x}} \| S_{1, \bullet \geq 2 c} \psi_{2}\|_{L^{2}_{t}L^{4+}_{x}}  \lesssim \mu^{\frac{1}{4}-}
\| S_{1} \psi_{1}\|_{\widetilde{F}_{\Omega}} \| S_{1}\psi_{2}\|_{\widetilde{F}_{\Omega}}.
\end{equation}

For terms $\mathcal{H}_{1}$ and $\mathcal{H}_{4}$, because  the distance of the two modulations has constant lower bound, and as $a \cdot b >0$, we can derive that $\sigma \sim 1$, hence
\[
\| W( V (\mathcal{H}_{1} + \mathcal{H}_{4}))\|_{F^{1/2}_{\Omega}} \lesssim \sum_{\sigma \sim 1} \sigma^{-\frac{3}{2}} \| \langle \Omega\rangle (\mathcal{H}_{1} + \mathcal{H}_{4}) \|_{L^{\infty}_{t}L^{2}_{x}}  
\]
\begin{equation}\label{equation22061}
\lesssim \mu^{\frac{3}{2}} \| S_{1} \langle \Omega \rangle \psi_{1}\|_{L^{\infty}_{t}L^{2}_{x}} \| S_{1} \langle \Omega\rangle \psi_{2}\|_{L^{\infty}_{t}L^{2}_{x}}\lesssim \mu^{\frac{3}{2}}
\| S_{1} \psi_{2}\|_{\widetilde{F}_{\Omega}} \| S_{1}\psi_{1}\|_{\widetilde{F}_{\Omega}}.
\end{equation}

To sum up, from (\ref{equation22062}), (\ref{equation22063}), (\ref{equation22060}) and (\ref{equation22061}), the desired estimate (\ref{equation22002}) holds when $a\cdot b > 0$. When $a\cdot b < 0$ and $\sigma \leq c$, then both inputs can't be sufficiently close to their corresponding light cones at the same time. The following holds when $a \cdot b < 0$, $\sigma \leq c$:

\begin{equation}\label{equation50000}
P_{\mu} S_{\sigma, \sigma}Q_{a,b} (S_{1} \psi_{1}, S_{1} \psi_{2})  = P_{\mu} S_{\sigma, \sigma}Q_{a,b} (S_{1, \bullet \geq c }^{a} \psi_{1}, S_{1} \psi_{2})  + P_{\mu} S_{\sigma, \sigma}Q_{a,b} (S_{1, \bullet < c}^{a} \psi_{1}, S_{1, \bullet \geq c}^{b} \psi_{2}) .
\end{equation}
As

\[
\sum_{\mu< \sigma \leq c}\| W(  P_{\mu}  S_{\sigma, \sigma} V Q_{a,b} (S_{1, \bullet \geq c }^{a} \psi_{1}, S_{1} \psi_{2}) )\|_{F^{1/2}_{\Omega}} \lesssim \sum_{\mu< \sigma \leq c} \sigma^{-\frac{1}{2}} \mu^{\frac{3}{4}-} \sum_{d\geq c} \| S^{a}_{1,d}\langle \Omega\rangle \psi_{1}\|_{L^{2}_{t}L^{2}_{x}} \| S_{1}\psi_{2}\|_{L^{2}_{t}L^{4+}_{x}}
\]

\begin{equation}\label{equation50001}
+ \sum_{\mu< \sigma \leq c} \sigma^{-\frac{1}{2}} \mu^{\frac{3}{p}} \sum_{d\geq c} d^{-1} \| \sup_{\omega}\|S^{\omega, a}_{1,d} \psi_{1}\|_{L^{p}_{x}}\|_{L^{1}_{t}}  \| S_{1}\langle \Omega \rangle \psi_{2}\|_{L^{\infty}_{t}L^{2}_{x}} \lesssim \mu^{\frac{3}{p}-\frac{1}{2}} \| S_{1}\psi_{1}\|_{\widetilde{F}_{\Omega}}\| S_{1} \psi_{2}\|_{\widetilde{F}_{\Omega}},
\end{equation}

\[
\sum_{\mu< \sigma \leq c}\| W(  P_{\mu}  S_{\sigma, \sigma} V Q_{a,b} (S_{1, \bullet < c }^{a} \psi_{1}, S_{1,\bullet \geq c}^{b} \psi_{2}) )\|_{F^{1/2}_{\Omega}} \lesssim \]

\[ \sum_{\mu< \sigma \leq c} \sigma^{-\frac{1}{2}} \mu^{\frac{3}{4}-}\sum_{d\geq c} \| S^{b}_{1,d}\langle \Omega\rangle \psi_{2}\|_{L^{2}_{t}L^{2}_{x}} \| S_{1, \bullet < c}^{a}\psi_{1}\|_{L^{2}_{t}L^{4+}_{x}}+ \sum_{\mu< \sigma \leq c} \sigma^{-\frac{1}{2}} \mu^{\frac{3}{p}} \sum_{d\geq c}d^{-1}  \]

\begin{equation}\label{equation50002}
\| \sup_{\omega}\|S^{\omega, b}_{1,d} \psi_{2}\|_{L^{p}_{x}}\|_{L^{1}_{t}}  \| S_{1,\bullet < c}^{a}\langle \Omega \rangle \psi_{1}\|_{L^{\infty}_{t}L^{2}_{x}} 
\lesssim \mu^{\frac{3}{p}-\frac{1}{2}} \| S_{1}\psi_{1}\|_{\widetilde{F}_{\Omega}}\| S_{1} \psi_{2}\|_{\widetilde{F}_{\Omega}},
\end{equation}

\[
\sum_{c< \sigma \lesssim 1}\| W(  P_{\mu}  S_{\sigma, \sigma} V Q_{a,b} (S_{1} \psi_{1}, S_{1} \psi_{2}) )\|_{F^{1/2}_{\Omega}} \lesssim \mu^{3/2} 
\sum_{c< \sigma \lesssim 1}\sigma^{-\frac{3}{2}}\| S_{1}\langle \Omega\rangle\psi_{1}\|_{L^{\infty}_{t}L^{2}_{x}} \| S_{1}\langle \Omega\rangle\psi_{2}\|_{L^{\infty}_{t}L^{2}_{x}}
\]
\begin{equation}\label{equation50003}
\lesssim \mu^{\frac{3}{2}} \| S_{1}\psi_{1}\|_{\widetilde{F}_{\Omega}}\| S_{1} \psi_{2}\|_{\widetilde{F}_{\Omega}}.
\end{equation}
Hence from (\ref{equation50000}), (\ref{equation50001}), (\ref{equation50002}) and (\ref{equation50003}), we see that desired estimate (\ref{equation22002}) also holds when $a\cdot b < 0$.
\end{proof}

\section{DKG system with nonzero  mass terms}\label{nonzeromass}

\par Our goal in this section is to show that remained nonzero mass cases can be handled very similarly to the
 method we used in section \ref{proof}.
 The main difference is that the characteristic hypersurfaces will change from light cone to hyperboloid.
 The whole argument in section \ref{proof} is very robust, there are only few lemmas that indeed depends on the information about
characteristic hypersurface. 

\par The first lemma  might be affected is the Strichartz estimate for the frequency localized data which has extra regularity in angular variables, while due to the higher linear decay rate of Klein-Gordon part when compares
to the linear wave equation, 
the range we used (mostly $L^{2}_{t}L^{4+}_{x}$ space) in the wave type still valid in the Klein Gordon type, thus we are safe to use the improved Strichartz estimate.
\par The next lemma might be affected is the angular decomposition lemma we introduced in section \ref{Bilinear} and the fact that we could gain $\mu$ (when $\lambda =1$) in the High $\times$ High interaction case for the bilinear operator $Q_{a,b}$ and in the Low $\times$ High interaction for the bilinear operator $\tilde{Q}_{a,b}$.

Let's first consider the wide angle decomposition lemma in the nonzero mass setting. Suppose that the two inputs has space-time frequencies $(\tau_{1}, \xi_{1})$ and $(\tau_{2}, \xi_{2})$ and then the output frequency will be $(\tau_{1}\pm \tau_{2}, \xi_{1}\pm \xi_{2})$, here the specific sign of  $\pm$ depends on the type of bilinear operator. Assume that the maximum and the medium sizes of two inputs and output are at level $\lambda$ and the minimum size is at level $\mu$. Moreover, the modulations of both inputs and output are less than $d$ and $d\leq \mu$. For  $(\tilde{\tau}_{i}, \tilde{\xi}_{i}) \in \{ (\tau_{1}, \xi_{1}), (\tau_{2}, \xi_{2}), (\tau_{1}\pm \tau_{2}, \xi_{1}\pm \xi_{2}) \}$, $i\in \{1, 2\}$, $a, b \in\{M, m\}$ and assume that $|(\tilde{\tau}_{1}, \tilde{\xi}_{1})| \sim \lambda, |(\tilde{\tau}_{2}, \tilde{\xi}_{2})| \sim \mu \ll \lambda$ and $1 \ll \lambda$.  Then the general formula to do the wide angle decomposition is the following:

\begin{equation}\label{equation22080}
\Big| \big|\sqrt{|\tilde{\xi}_{1}|^{2} + a^{2}} \pm_{1} \sqrt{|\tilde{\xi}_{2}|^{2} + b^{2}}\big| - \sqrt{|\tilde{\xi}_{1}\pm_{2} \tilde{\xi}_{2}|^{2} + (2M+m-a-b)^{2}} \Big| \leq d ,
\end{equation}
after calculation, we could derive the left hand side of (\ref{equation22080}) has the following size:
\[
\lambda^{-1}\Big| 2(\sqrt{|\tilde{\xi}_{1}|^{2} + a^{2}}\sqrt{|\tilde{\xi}_{2}|^{2} + b^{2}} - |\tilde{\xi}_{1}| |\tilde{\xi}_{2}|) \pm_{3} [(2M+m-a-b)^{2}- a^{2}-b^{2}]
\]
\[ + 2 |\tilde{\xi}_{1}||\tilde{\xi}_{2}|(1-\pm_{4}\cos(\angle(\tilde{\xi}_{1}, \tilde{\xi}_{2}))) \Big|,
\]
from Cauchy-Schwarz inequality, we know that
\begin{equation}
\sqrt{|\tilde{\xi}_{1}|^{2} + a^{2}}\sqrt{|\tilde{\xi}_{2}|^{2} + b^{2}} - |\tilde{\xi}_{1}| |\tilde{\xi}_{2}| \geq ab,
\end{equation}
hence we can verify that whatever possible choice of $a,b$, the following is always true when $2M \geq m$,
\begin{equation}\label{equation22070}
 2(\sqrt{|\tilde{\xi}_{1}|^{2} + a^{2}}\sqrt{|\tilde{\xi}_{2}|^{2} + b^{2}} - |\tilde{\xi}_{1}| |\tilde{\xi}_{2}|) \pm_{3} [(2M+m-a-b)^{2}- a^{2}-b^{2}] \geq 0
\end{equation}
and it is sharp. The equality holds when $a=b=M$, $\pm_{3}=-$, $2M=m$ or $ab= M m$, $\pm_{3}=+$ $2M=m$, hence the threshold $2M=m$ is also sharp. When $m < 2M$, the left hand side of (\ref{equation22070}) could be negative (e.g when $|\tilde{\xi}_{1}| = |\tilde{\xi}_{2}|$), 
hence the angle between $\tilde{\xi}_{1}$ and $\tilde{\xi}_{2}$ (or $-\tilde{\xi}_{2}$) would be  large and doesn't change too much with respect to the modulation ``$d$'', the scenario mentioned in subsection \ref{maindiff} can also happen. While if $2M \geq m$, (\ref{equation22070}) holds, then angle between $\tilde{\xi}_{1}$ and $\tilde{\xi}_{2}$ (or $-\tilde{\xi}_{2}$) is indeed smaller than $(d/\mu)^{1/2}$, hence the wide angle decomposition formulas also hold when $2M \geq m$. 

\par While for the fact that we can gain $\mu$ in the High $\times$ High interaction case for  the bilinear operator $Q_{a,b}$ and gain $\mu$ in the Low $\times$ High interaction for the bilinear operator $\tilde{Q}_{a, b}$, actually it's valid for all possible $M,m$. We don't need the bound $2M  \geq m$ for this fact. As in the Klein-Gordon case, the problem become inhomogeneous, without scaling, now goal is to gain $\mu/\lambda$.
Suppose that low space-time frequency has size $\mu$, while the two high space-time frequency has size $\lambda$, $\mu
\ll \lambda$ and $1\ll \lambda$. Also the modulations of two high space-time frequency is small (i.e less than a sufficient small constant times $\lambda$), then we have
\[
|\xi_{1}| + \sqrt{|\xi_{1}|^{2} + a^{2}} \sim |\xi_{2}| + \sqrt{|\xi_{2}|^{2} + b^{2}} \sim \lambda, |\xi_{1} - \xi_{2}| \lesssim \mu, \quad a, b,c \in \{m,M\},
\]
\begin{equation}
|\xi_{1}| \sim |\xi_{2}| \sim \lambda, |\xi_{1}-\xi_{2} | \lesssim \mu \Longrightarrow \angle(\xi_{1}, \xi_{2}) \lesssim {\mu}/{\lambda}.
\end{equation}

\begin{remark}
Notice that in above argument to gain $(d/\mu)^{1/2}$ and $\mu/\lambda$, we only considered the high frequency case, i.e $1\ll \lambda$. For the low frequency case, or more precisely  when above argument to gain $(d/\mu)^{1/2}$ and $\mu$ fails, we have $|\xi_{i}| \ll 1$,  $| \tau_{i} \pm |\sqrt{|\xi_{i}|^{2}+ a^{2}}| | \ll 1$ for $i\in\{1,2\}$ and $|\tau_{1} + \tau_{2}| + |\xi_{2}+\xi_{1}|\sim \mu \ll 1$ .
In this scenario, as $m >0$, then  $(-\Box + m^{2}) ^{-1}$ is like a constant in the frequency space and the modulation of output has size $1$, hence we can put the output in $\langle \Omega\rangle^{-1} X_{\mu}^{2,1}\cap Z_{\Omega, \mu}$, and the  Strichartz estimates are sufficient to estimate this case.
\end{remark}

\par Now we know that all lemmas used in section \ref{proof}  are still valid when $2M \geq m >0$, however 
there is still one point need to be mentioned, which is that the Dirac parts of the formulation (\ref{DKG5}) are 
not the standard Klein-Gordon type. Recall the following formulation of nonzero mass DKG system:
\begin{equation}\label{massDKG}
\left\{\begin{array}{lr}
(-i\partial_{t}  + |D| ) \psi_{+} = -M \beta \psi_{-} + \Pi_{+}(D)(\phi \beta \psi), & \\
(-i \partial_{t} - |D|) \psi_{-} = -M \beta \psi_{+} + \Pi_{-}(D) (\phi \beta \psi), & \\
(-\square + m^{2}) \phi = \langle \beta \psi, \psi \rangle. & \\
\end{array}\right.
\end{equation}
By applying  $(-i\partial_{t} \mp |D|)$ on the both hand side of the equations satisfied by $\psi_{\pm}$, and then after a simple 
substitution, we can see that $\psi_{\pm}$ satisfies standard Klein-Gordon type equations. We can derive the following Duhamel formula for the DKG system
\begin{equation}
\phi = \widetilde{\phi}_{0}  + V_{m}\big(\langle \beta \psi, \psi \rangle \big), 
\end{equation}
\[
\psi_{\pm} =
 \widetilde{\psi}_{\pm,0} - V_{M} \big( (-i\partial_{t} \mp |D|) \Pi_{\pm}( \phi\beta \psi )\big) + V_M \big( M\beta  ( \Pi_{\mp}(\phi \beta \psi)) \big)
\]
\begin{equation}\label{equation90000}
=  \widetilde{\psi}_{\pm,0}  -V_{M} (\Pi_{\pm}(-i\p_{t}\mp |D|-M \beta)(\phi \beta\psi)  ).
\end{equation}
 here $\widetilde{\phi}_{0}$ denotes the linear Klein-Gordon solution with initial data $(\phi_{0}, \phi_{1})$, $\widetilde{\psi}_{\pm,0}$ denotes the
 homogeneous linear Klein-Gordon solution associated with $\psi_\pm$ and 
with initial data $(\psi_{\pm,0},\partial_t \psi_{\pm}(0))$, where $\partial_t \psi_\pm (0) $ is derived from (\ref{massDKG}) by evaluating at the initial time $0$. Operator $V_{M}(\cdot)$ denotes the parametrix for the inhomogeneous Klein-Gordon equations, i.e $V_{M}(f)$ denotes the solution of $ (-\square + M^2) u = f$ with zero
initial data. 
\vspace{1\baselineskip}

Recall that in the section \ref{proof}, we need to utilize the fact that for the $\psi_{+}$ (resp. $\psi_{-}$), the  characteristic hypersurface is the lower half cone (resp. upper half cone) instead of the entire cone to do the bilinear decomposition, as otherwise there are exist some cases that the null structure won't be helpful (note the fact that the null structure depends only on the type of inputs while the angle between Dirac input and the output depends on the location of two space-time frequencies). So, for the Klein-Gordon case, we need to pay special attention to the behavior of $\psi_{+} $ (resp. $\psi_{-}$) near the upper half hyperboloid (resp. the lower half hyperboloid). The function spaces we will working on are very similar to the function space we used in the massless case, the major change is that the modulation used for $\psi_{+}$ (resp $\psi_{-}$) will be the distance to the entire hyperboloid instead of the lower half hyperboloid (resp the upper lower half hyperboloid).

 From (\ref{equation90000}), we could see that the symbol of parametrix associated with $\psi_{\pm}$ is the following:
 
\[
L_{\pm}(\tau, \xi):= \frac{\tau\mp |\xi| - M\beta}{|\tau|^{2}-|\xi|^{2} - M^{2}} = \frac{\tau\mp |\xi| - M\beta}{(\tau + \sqrt{|\xi|^{2}+M^{2}}) (\tau- \sqrt{|\xi|^{2} + M^{2}})}.
\]

When the two input frequencies $(\tau_{i},\xi_{i})$, $i\in\{1,2\}$  and the output  frequency  $(\tau, \xi):= (\tau_{1}\pm \tau_{2}, \xi_{1}\pm \xi_{2})$are all near the hyperboloid, and suppose that the modulations (distance to the entire hyperboloid) are all less than $d$, then we have
\begin{equation}
\big| | \sqrt{|\xi_{1}|^{2}+ M^{2}}  - \sqrt{|\xi_{2}|^{2}+ M^{2}}|  - \sqrt{|\xi|^{2}+m^{2}}\big| \lesssim d,
\end{equation}
similar to the analysis in the (\ref{equation22080}) case, when $2M > m$, we can show that
\begin{equation}\label{equation90003}
d\gtrsim_{m,M} \frac{1}{\sqrt{|\xi|^{2} + m^{2}}}.
\end{equation}
Hence, within the possible range of modulation, when the space-time frequency of output $\psi_{+}$ is very close to the upper half hyperboloid, then we have 
\begin{equation}\label{equation90002}
|L_{+}(\tau,\xi)c^{-}_{d}(\tau, \xi)|  \lesssim \frac{d_{-}+M}{d_{-}\sqrt{|
\xi|^{2}+M^{2}}}, \quad |d_{-}|:=|\tau -\sqrt{|\xi|^{2}+M^{2}}| \ll \sqrt{|\xi|^{2}+M^{2}}.
\end{equation}
One can verify that the upper bound of (\ref{equation90002}) is sufficient to get desired estimate without using null structure when the space-time frequency of output is close to the upper hyperboloid, as the modulation has the lower bound (see(\ref{equation90003})), we don't need to worry about the summation problem with respect to $d$. When the output frequency is near the lower half hyperboloid, we can utilizing the null structure and the symbol of parametrix $|L_{+}(\tau,\xi)c^{+}_{d}(\tau, \xi)|  \lesssim d_{+}^{-1}$ is of the same type as the one appeared in the massless case, hence, with minor
 modifications we can also get desired estimate when the output space-time frequency is very close to the lower half hyperboloid. $\psi_{-}$ can be estimated in the same way. Hence the main theorem also holds when the mass terms presents in the DKG system when $2M  > m.$

\end{document}